\theoremstyle{definition}
\newtheorem{definition}{Definition}[section]
\newtheorem{remark}[definition]{Remark}
\newtheorem{example}{Example}
\theoremstyle{plain}
\newtheorem{theorem}[definition]{Theorem}
\newtheorem{lemma}[definition]{Lemma}
\newtheorem{corollary}[definition]{Corollary}
\newtheorem{proposition}[definition]{Proposition}
\numberwithin{equation}{section}
\newcommand{\R}{\ensuremath{\mathbb{R}}}     
\newcommand{\N}{\ensuremath{\mathbb{N}}}     
\newcommand{\Z}{\ensuremath{\mathbb{Z}}}     
\def\sst{\scriptstyle}
\def\lt{\left}
\def\rt{\right}
\DeclareFontFamily{OMX} {MnSymbolE}{}
\DeclareFontShape{OMX}{MnSymbolE}{m}{n}{
    <-6>  MnSymbolE5
   <6-7>  MnSymbolE6
   <7-8>  MnSymbolE7
   <8-9>  MnSymbolE8
   <9-10> MnSymbolE9
  <10-12> MnSymbolE10
  <12->   MnSymbolE12}{}
\DeclareSymbolFont{largesymbol}  {OMX}{MnSymbolE}{m}{n}
\DeclareMathSymbol{\llangle}{\mathopen}{largesymbol}{'164}
\DeclareMathSymbol{\rrangle}{\mathclose}{largesymbol}{'171}
\newcommand{\set}[1]{\lt\{#1\rt\}}                   									
\newcommand{\expan}[1]{\lt\langle #1\rt\rangle}           						
\newcommand{\sexpan}[1]{\llangle #1\rrangle}         									
\newcommand{\floor}[1]{\lt\lfloor #1\rt\rfloor}												
\newcommand{\F}{\ensuremath{\mathbb{F}}}															
\newcommand{\A}{\ensuremath{\mathcal{A}}}															
\newcommand{\m}{\ensuremath{\mathbf{m}}}															
\DeclareMathOperator{\supp}{supp} 																		
\DeclareMathOperator{\anc}{anc}  																			
\DeclareMathOperator{\pdb}{pd_{\it b}}																
\DeclareMathOperator{\db}{d_{\it b}}																	
	\def\imod#1{\allowbreak\mkern10mu({\operator@font mod}\,\,#1)} 
	\def\wmod#1{\allowbreak\mkern10mu{\operator@font mod}\,\,#1} 
	\def\@setcopyright{}                                           
	\def\serieslogo@{}
\begin{document}
	\author[M.J.C.~Loquias]{Manuel Joseph C.~Loquias}
	\address{Chair of Mathematics and Statistics, University of Leoben, Franz-Josef-Strasse 18, A-8700 Leoben,
	Austria, and Institute of Mathematics, University of the Philippines Diliman, 1101 Quezon City, Philippines}
	\email{mjcloquias@math.upd.edu.ph}

	\author[M.~Mkaouar]{Mohamed Mkaouar}
	\address{Facult\'e des Sciences de Sfax, BP 802, Sfax 3018, Tunisia} 
	\email{mohamed.mkaouar@fss.rnu.tn}

	\author[K.~Scheicher]{Klaus Scheicher}
	\address{Institute for Mathematics, University of Natural Resources and Applied Life Sciences, Gregor Mendel Strasse 33, A-1180 Wien, AUSTRIA}
	\email{klaus.scheicher@boku.ac.at}

	\author[J.M.~Thuswaldner]{J\"org M.~Thuswaldner}
	\address{Chair of Mathematics and Statistics, University of Leoben, Franz-Josef-Strasse 18, A-8700 Leoben, Austria}
	\email{joerg.thuswaldner@unileoben.ac.at}

	\title{Rational digit systems over finite fields and Christol's Theorem}

		\begin{abstract} 
		Let $P, Q\in \F_q[X]\setminus\{0\}$ be two coprime polynomials over the finite field $\F_q$ with $\deg P > \deg Q$. We represent each polynomial $w$ over $\F_q$ by 
		\[
		w=\sum_{i=0}^k\frac{s_i}{Q}{\lt(\frac{P}{Q}\rt)}^i
		\] 	
		using a rational \emph{base} $P/Q$ and \emph{digits} $s_i\in\F_q[X]$ satisfying $\deg s_i < \deg P$. 
		\emph{Digit expansions} of this type are also defined for formal Laurent series over $\F_q$.
		We prove uniqueness and automatic properties of these expansions. 
		Although the $\omega$-language of the possible digit strings is not regular, we are able to characterize the digit expansions of algebraic elements. 
		In particular, we give a version of Christol's Theorem by showing that the digit string of the digit expansion of a formal Laurent series is automatic 
		if and only if the series is algebraic over $\F_q[X]$. 
		Finally, we study relations between digit expansions of formal Laurent series and a finite fields version of Mahler's $3/2$-problem. 
	\end{abstract}

	\subjclass[2010]{Primary: 11A63, 68Q70; Secondary: 11B85}

	\keywords{Finite fields, formal Laurent series, digit system, Christol's Theorem}

	\date{\today}

	\thanks{The first and fourth authors were supported by project I1136 ``Fractals and numeration'' granted by the Austrian Science Fund (FWF).  
	The first author was also supported by an Austrian Exchange Service (OeAD-GmbH) scholarship financed by the Austrian Federal Ministry of Science, 
	Research and Economy (BMWFW) within the framework of the ASEA UNINET. The third author was supported by project P23990 ``Number systems for Laurent series over finite fields'' granted by the Austrian Science Fund (FWF)}

\dedicatory{Dedicated to Professor Peter Grabner on the occasion of his $50$th birthday}

	\maketitle
	
	\section{Introduction}
		We study digit systems with ``rational bases'' defined in rings of polynomials and fields of formal Laurent series over finite fields. 
		Although the $\omega$-language of the digit strings of expansions with respect to such a digit system is not regular, expansions of algebraic elements are well behaved. 
		In particular, we are able to establish a version of Christol's Theorem for expansions of formal Laurent series. 
		
		Digit systems over finite fields have been first studied in 1991 by Kov\'acs and Peth\H{o}~\cite{KP91}. 
		Since then they have been generalized in various ways (see~\cite{BBST:09,R:08,S07,ST03}) 
		and many of their properties have been investigated ({\it cf.}~\cite{AH:13,AHPPS,BGM:14,LP:08,LWX:08,MW:15,SSTV14}). 
		As indicated in Effinger {\it et al.}~\cite{EHM} theories sometimes reveal new features when transferred from the integer to the finite fields setting. 
		This is true also for digit systems over finite fields: although they often behave similar to their ``cousins'' defined over $\mathbb{Z}$ and $\mathbb{R}$, 
		they also show completely different properties that have no analogue in the integer case.  
		This is highlighted for instance by the $p$-automaticity results by Rigo~\cite{R:08}, Rigo and Waxweiler~\cite{RW11} and, more recently, by Scheicher and Sirvent~\cite{SS}. 
		The specialty of the digit systems studied in the present paper consists in the fact that they have rational functions as bases and, 
		hence, form the analogues of the rational based digit systems studied by Akiyama {\it et al.}~\cite{AFS}.

		The rational based digit systems studied by Akiyama {\it et al.}~\cite{AFS} have a very difficult structure and are related to the notorious $3/2$-problem of Mahler~\cite{M68}. 
		The present contribution aims at getting more information on the structure of their finite fields analogues. 
		Although some properties that we face are similar to the ones in the integer setting, 
		we put emphasis in proving results that are special to the finite fields setting.  
		These new results are often related to automaticity properties of the digit systems over finite fields. 
		As mentioned above, one of our main results is a version of Christol's Theorem ({\it cf.}~Theorem~\ref{Christol}). 
		To prove this we derive explicit formulas for the digit expansion of a given formal Laurent series (see Theorem~\ref{digitLaurent}). 
		A motivation of these results comes from the fact that they contrast the results by Adamczewski and Bugeaud~\cite{AB07} 
		stating that in the case of $q$-ary number systems, automaticity of expansions gives rise to transcendental numbers. 
		Indeed, in the case of our Laurent series expansions, automaticity of expansions yields algebraicity of the expanded element.
		
		To be more precise, let $P$ and $Q$ be nonzero and coprime polynomials over a finite field $\F_q$ whose degrees satisfy $\deg P > \deg Q$. 
		Then by a simple algorithm we can expand each polynomial $w$ over $\F_q$ by its \emph{$P/Q$-polynomial digit expansion} 
		\[
			w=\sum_{i=0}^k\frac{s_i}{Q}{\lt(\frac{P}{Q}\rt)}^i
		\] 	
		with ``digits'' $s_i$ that are polynomials over $\F_q$ satisfying $\deg s_i < \deg P$ (see Section~\ref{sec:2}). 
		While in the integer case the least significant digits are periodic, 
		they form an automatic sequence and can even be generated using substitutions in the present setting (Section~\ref{sec:3}).
		In addition, Corollary~\ref{multbyR} tells us that the multiplication of a polynomial by a fixed polynomial can be realized by a finite transducer automaton. 
		
		The expansion graph characterizing the possible $P/Q$-polynomial digit expansions is set up in Section~\ref{sec:4} in a similar way as in the integer setting. 
		However, in the finite fields case this graph is, up to a self-loop at $0$, a complete $(q^{\deg P-\deg Q})$-ary tree. 
		Nevertheless, due to the complicated structure of its labels, the graph contains no nontrivial infinite string that is eventually periodic (see Theorem~\ref{eventperiod}). 
		The set of infinite strings in the graph may be viewed as a language called an \emph{$\omega$-language}.
		Analogous to the finite case, an $\omega$-language is said to be \emph{regular} if it is accepted by a {\it B\"uchi automaton}
		(for details see \emph{e.g.}~\cite{L97,W90}).  
		We see in Theorem~\ref{omegalang} that the $\omega$-language of all infinite strings in the expansion graph is not regular. 
		
		In Akiyama~{\it et al.}~\cite{AFS}, not only integers are represented in terms of positive powers of rational numbers. 
		The authors extend their notion of rational based number systems to negative powers in order to represent real numbers. 
		Like $\F_q[X]$ being the analogue of $\Z$ in the finite fields setting, the field of Laurent series $\F_q((X^{-1}))$ furnishes the analogue of the real numbers. 
		Indeed, it turns out that $P/Q$-digit expansions can also be defined for formal Laurent series over $\F_q$ by means of the infinite strings in the expansion graph 
		(which is detailed in Section~\ref{sec:5}) and that they enjoy unicity properties (Theorem~\ref{wpq}). 
		We mention that an extension in the positive direction to the ring of formal power series $\F_q[[X]]$ is problematic 
		in view of issues with the determination of a natural unique residue class for the digits (see Remark~\ref{rem:powerseries} for details on this).
 		
		Section~\ref{sec:6a} contains two theorems that allow to compute the $P/Q$-digit expansions of polynomials and of formal Laurent series by closed formulas. 
		The latter formula is then used in Section~\ref{sec:7} to derive a version of Christol's Theorem: we are able to show that the algebraic elements in the field of formal Laurent 
		series over $\F_q$ are characterized by automatic $P/Q$-digit expansions. 
		As mentioned above this is surprising as the structure of the expansion graph is complicated. 
		Finally, in Section~\ref{sec:6} we relate $P/Q$-digit expansions of formal Laurent series to a finite fields version of Mahler's $3/2$-problem.			
	
	\section{Expansions of polynomials with respect to rational bases }\label{sec:2}
		Denote by $\F_q$ the finite field of $q$ elements and let $\F_q[X]$, $\F_q(X)$, and $\F_q((X^{-1}))$ be the ring of polynomials over $\F_q$, 
		the field of rational functions over $\F_q$, and the field of formal Laurent series over $\F_q$, respectively. 
		In this section we define digit systems for $\F_q[X]$ whose bases are rational functions. 
		These digit systems provide a finite fields analogue of the digit systems studied in Akiyama {\it et al.}~\cite{AFS}. 	        
	      
	  \begin{definition}[{$P/Q$-digit systems in $\F_q[X]$}]
			Let $P,Q\in\F_q[X]\setminus\{0\}$, with $\deg{P}>\deg{Q}$, be two coprime polynomials and let
			$\mathcal{D}\vcentcolon=\set{s\in\F_q[X]\,:\,\deg{s}<\deg{P}}$.
			Then $(P/Q,\mathcal{D})$ is called a \emph{rational function based digit system} with \emph{base} $P/Q$ and 
			\emph{set of digits} $\mathcal{D}$ (\emph{$P/Q$-digit system} for short). 
			An expansion of an element $w\in \F_q[X]$ of the form
			\begin{equation}\label{e4}
				w=\sum_{i=0}^k\frac{s_i}{Q}{\lt(\frac{P}{Q}\rt)}^i,
			\end{equation}		
			 with $s_i\in \mathcal{D}$ is called \emph{$P/Q$-polynomial digit expansion} of~$w$.
		\end{definition}
	
		\begin{remark}
			If $\deg Q = 0$, {\it i.e.}, $Q\in \F_q\setminus\{0\}$, the $(P/Q)$-digit system is a special case of the digit systems studied in \cite{ST03}. 
			In this case the language of the digit strings is the full shift.
		\end{remark}
						
	 	Let $(P/Q,\mathcal{D})$ be a rational function based digit system. 
		To see that each $w\in \F_q[X]$ admits a $P/Q$-polynomial digit expansion we introduce the following algorithm. 
		Set $w_0 = w$. For $i\geq 0$, define $w_{i+1}\in \F_q[X]$ and $s_i\in \mathcal{D}$ recursively by the equation
		\begin{equation}\label{e3}
			Qw_i=Pw_{i+1}+s_i,
		\end{equation}
		where $s_i$ is the unique remainder of the division of $Qw_i$ by $P$. 
		Since  $\deg w_{i+1}<\deg w_{i}$ holds for $w_i\not=0$, there exists a minimal $k$ such that $w_{i}=0$ for each $i>k$. 
		Applying \eqref{e3} for $k+1$ times produces the \emph{digits} $s_0,\ldots, s_k$ and yields the expansion \eqref{e4} for $w$.
			
		In order to show that the $P/Q$-polynomial digit expansion generated by this algorithm is the only $P/Q$-polynomial digit expansion for $w\in\F_q[X]$, we need the following lemma.

		\begin{lemma}\label{th1}
			Let $(P/Q,\mathcal{D})$ be a rational function based digit system. If 
			\begin{equation}\label{eq:unique}
			\sum_{i=0}^k s_i{\lt(\frac{P}{Q}\rt)}^i=0,
			\end{equation}
			where
			$s_i\in\mathcal{D}$ for each $i\in\set{0,\ldots,k}$, 
			then $s_i=0$ for each $i\in\set{0,\ldots,k}$.
		\end{lemma}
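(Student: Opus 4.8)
The plan is to clear denominators and then strip off the digits one at a time by reducing modulo $P$. First I would multiply \eqref{eq:unique} through by $Q^k$ to obtain the polynomial identity
\[
\sum_{i=0}^k s_i P^i Q^{k-i}=0 \quad\text{in } \F_q[X].
\]
Every summand with $i\geq 1$ carries a factor of $P$, so reducing this identity modulo $P$ annihilates all of them and leaves only the $i=0$ term, giving $s_0 Q^k\equiv 0 \imod{P}$. Since $P$ and $Q$ are coprime, so are $P$ and $Q^k$, hence $Q^k$ is a unit modulo $P$ and we conclude $P\mid s_0$. But $s_0\in\mathcal{D}$ means $\deg s_0<\deg P$, and the only polynomial of degree less than $\deg P$ divisible by $P$ is $0$; thus $s_0=0$.

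With $s_0=0$ in hand I would proceed by induction on $k$, the base case $k=0$ being immediate. Knowing $s_0=0$, the relation \eqref{eq:unique} becomes $\sum_{i=1}^k s_i(P/Q)^i=0$. Working now in the field $\F_q(X)$, I would factor out the nonzero element $P/Q$ and reindex by $j=i-1$ to obtain $\sum_{j=0}^{k-1} s_{j+1}(P/Q)^j=0$, which is again a relation of the form \eqref{eq:unique} but of length one shorter and with digits still in $\mathcal{D}$. The induction hypothesis then forces $s_1=\cdots=s_k=0$, and together with $s_0=0$ this proves the claim.

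There is no substantial obstacle here; the lemma is elementary, and the single step that genuinely uses the hypotheses is the implication $P\mid s_0 Q^k \Rightarrow P\mid s_0$. This is precisely where the coprimality of $P$ and $Q$ is indispensable: without it $P$ could share a common factor with $Q^k$, and $s_0$ would not be forced to vanish. Everything else—clearing denominators, reducing modulo $P$, and cancelling the unit $P/Q$ in $\F_q(X)$—is routine bookkeeping that merely sets up this one decisive divisibility.
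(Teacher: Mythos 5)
Your proof is correct and follows essentially the same route as the paper: the paper picks the minimal index $d$ with $s_d\neq 0$, multiplies by $Q^kP^{-d}$, and uses coprimality of $P$ and $Q$ plus the degree bound $\deg s_d<\deg P$ to force $s_d=0$, which is exactly the divisibility step you isolate for $s_0$ and then propagate by induction instead of by a minimal-counterexample argument. The two organizations are interchangeable here, and your identification of the coprimality step as the only nontrivial ingredient matches the paper's proof.
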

		\begin{proof}
			Suppose that this is wrong. Then there is a minimal $d\in \mathbb{N}$ with $s_d\not=0$. 
			Multiplying \eqref{eq:unique} by $Q^kP^{-d}$ yields $s_dQ^{k-d} = PR$ for some $R\in \F_q[X]$. 
			Since $P$ and $Q$ are coprime, $P$ divides $s_d$. As  $\deg s_d < \deg P$, this implies that $s_d = 0$, a contradiction.
		\end{proof}

		Consequently, we obtain the following result.

		\begin{theorem}\label{integerRepresentation}
			Let $(P/Q,\mathcal{D})$ be a rational function based digit system. Then each $w\in\F_q[X]$ admits a unique $P/Q$-polynomial digit expansion.
		\end{theorem}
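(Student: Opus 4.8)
The plan is to prove existence and uniqueness separately. Existence has essentially been handed to us already: the recursive algorithm defined by equation \eqref{e3} terminates because $\deg w_{i+1} < \deg w_i$ whenever $w_i \neq 0$, so after finitely many steps we reach the zero polynomial and obtain the expansion \eqref{e4}. So the only real content is uniqueness, and I would reduce this to Lemma~\ref{th1}.

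For uniqueness, suppose $w$ has two $P/Q$-polynomial digit expansions, say with digits $s_0,\dots,s_k$ and $t_0,\dots,t_m$ (padding the shorter one with zero digits so that both run to the same length $k = m$). Subtracting the two expansions gives
\[
0 = \sum_{i=0}^k \frac{s_i - t_i}{Q}\lt(\frac{P}{Q}\rt)^i.
\]
Multiplying through by $Q$ yields $\sum_{i=0}^k (s_i - t_i)(P/Q)^i = 0$. Since each $s_i - t_i$ is a difference of elements of $\mathcal{D}$, it again satisfies $\deg(s_i - t_i) < \deg P$, so $s_i - t_i \in \mathcal{D}$. Lemma~\ref{th1} then forces $s_i - t_i = 0$ for every $i$, i.e. $s_i = t_i$, giving uniqueness of the digits.

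The only subtlety worth flagging is the bookkeeping needed before applying the lemma: the two expansions may a priori have different lengths, and the digit set $\mathcal{D}$ is closed under subtraction (since it is an $\F_q$-vector space of polynomials of bounded degree), which is exactly what lets the differences $s_i - t_i$ re-enter $\mathcal{D}$ so that Lemma~\ref{th1} applies. I expect no genuine obstacle here — the work was front-loaded into Lemma~\ref{th1}, whose coprimality-plus-degree argument is the true engine — so this theorem is essentially a clean corollary combining the termination of the algorithm (existence) with the lemma (uniqueness).
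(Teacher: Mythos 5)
Your proposal is correct and matches the paper's intended argument exactly: the paper derives the theorem as an immediate consequence of the expansion algorithm (existence) and Lemma~\ref{th1} (uniqueness, via subtracting two expansions and using that $\mathcal{D}$ is closed under subtraction). You have simply written out the details the paper leaves implicit behind the word ``Consequently.''
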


		The \emph{digit string} associated with a $P/Q$-polynomial digit expansion \eqref{e4} of a polynomial $w\in \F_q[X]$ 
		(which is unique in view of Theorem~\ref{integerRepresentation}) will be denoted by 
		\[\expan{w}_{P/Q}=(s_k\cdots s_0)_{P/Q}.\] 
		Note that $\expan{w}_{P/Q}$ is always a finite string over the alphabet $\mathcal{D}$.
		For the sake of simplicity of notation, we will sometimes just write $s_k\cdots s_0$ instead of $(s_k\cdots s_0)_{P/Q}$. 
		
		We denote the set of finite strings over $\mathcal{D}$ by
		\[\mathcal{D}^{\ast}=\set{s_k\cdots s_0\,:\, s_i\in\mathcal{D},k\ge 0} \cup \{\varepsilon\},\]
		where $\varepsilon$ denotes the empty string. To each string $s_0\cdots s_k\in \mathcal{D}^{\ast}$ we can associate an element of $\F_q(X)$ by the \emph{evaluation map}
		\begin{equation}\label{evalmap}
			\pi:\mathcal{D}^{\ast}\rightarrow\F_q(X)\;,\;s_k\cdots s_0  \mapsto\sum_{i=0}^k\frac{s_i}{Q}{\lt(\frac{P}{Q}\rt)}^i.
		\end{equation}
		The following simple example illustrates that $\pi(\mathcal{D}^{\ast})\subseteq\F_q[X]$ is not true in general.
		
		\begin{example}\label{ex1}
			Let $(P/Q,\mathcal{D})$ be the $P/Q$-digit system in $\F_2[X]$ with $P=X^2+1$ and $Q=X$.  
			Then $\mathcal{D}=\set{0,1,X,X+1}$. Table~\ref{tab1} shows some $P/Q$-polynomial digit expansions. 
			Observe that the string $1\in\mathcal{D}^{\ast}$ is mapped to $\pi(1)_{P/Q}=X^{-1}\not\in\F_q[X]$.

			\begin{table}[h]\label{tab1} 
				\begin{tabular}{|r|r|}
				\hline
				$w\in\F_2[X]$ & $\expan{w}_{P/Q}$\\\hline\hline
				$0$ & $(0)_{P/Q}$\\\hline
				$1$ & $(X)_{P/Q}$\\\hline
				$X$ & $(X,1)_{P/Q}$\\\hline
				$X+1$ & $(X,X+1)_{P/Q}$\\\hline
				$X^2$ & $(X,1,X)_{P/Q}$\\\hline
				$X^2+1$ & $(X,1,0)_{P/Q}$\\\hline
				$X^2+X$& $(X,X+1,X+1)_{P/Q}$\\\hline
				$X^2+X+1$& $(X,X+1,1)_{P/Q}$\\\hline
				\end{tabular}
				\bigskip
				\caption{Polynomial digit expansions of some elements of $\F_2[X]$ with respect to the rational function based digit system with base $(X^2+1)/X$. 
				Digits are separated by commas for the sake of readability.}
			\end{table}			
		\end{example}	
				
	\section{Automatic properties of $P/Q$-polynomial digit expansions}\label{sec:3}
		Let $(P/Q,\mathcal{D})$ be a rational function based digit system. 
		In this section, we look at properties of the language $\mathcal{L}_{P/Q}$ formed by the digit strings of $P/Q$-polynomial digit expansions with special emphasis on automaticity. 
		We first recall some definitions.
		
		\begin{definition}[{Transducer and deterministic finite automaton; {\it cf.}~\cite[Chapter~4]{AS}}]\label{transducer}
			The 6-tuple $\mathcal{A}=(\mathcal{Q},\Sigma,q,u_0,\Delta,\delta)$ is called a \emph{letter-to-letter finite state transducer}
			if $\mathcal{Q}$, $\Sigma$, and $\Delta$ are nonempty, finite sets, $u_0\in\mathcal{Q}$, and
			$q:\mathcal{Q}\times\Sigma\rightarrow \mathcal{Q}$ and $\delta:\mathcal{Q}\times\Sigma\rightarrow\Delta$ are mappings.
			The sets $\Sigma$ and $\Delta$ are called \emph{input} and \emph{output alphabet}, respectively. 
			We denote by $\Sigma^{\ast}$ and $\Delta^{\ast}$ the sets of finite strings of elements of $\Sigma$ and $\Delta$, respectively.
			The set $\mathcal{Q}$ is called the \emph{set of states}, and the transducer $\mathcal{A}$ starts at the \emph{initial state} $u_0$.
			The mappings $q$ and $\delta$ are called \emph{transition} and \emph{result function}, respectively.  
			Throughout this paper, we adopt the convention that transducers and automata read
			input strings from right to left.  If the input to the transducer $\mathcal{A}$ is the string $s_k\cdots s_0\in\Sigma^\ast$, then 
			the output string $d_k\cdots d_0\in\Delta^\ast$ is computed recursively by
			\begin{align*}
				d_i&=\delta(u_i,s_i),&u_{i+1}&=q(u_i,s_i),\qquad (0\le i < k);\\
				d_k&=\delta(u_k,s_k).
			\end{align*}
			If $\delta$ is instead the function $\delta:\mathcal{Q}\rightarrow\Delta$, then $\mathcal{A}$ is
			called a \emph{deterministic finite automaton with output} (DFAO for short) ({\it cf.}~\cite[p.~138]{AS}).  
			In this case the output is just the element $\delta(u_k)$.
			
			If the DFAO has no output function (and, hence, no output alphabet) it is called a \emph{deterministic finite automaton} (DFA for short).
			
			Let $\mathcal{A}$ be a DFA. 
			Extend the definition of the transition function $q$ to $\mathcal{Q}\times \Sigma^{\ast}$ by defining $q(u,\varepsilon)=u$ for $u\in \mathcal{Q}$ 
			and $\varepsilon$ the empty string, and $q(u,as)=q(q(u,s),a)$ for all $u \in \mathcal{Q}$, $s\in \Sigma^{\ast}$, and $a\in \Sigma$. 
			For each $F\subseteq\mathcal{Q}$ the language 
			\[\mathcal{L}_F=\mathcal{L}_F(\mathcal{A})=\{s\in\Sigma^\ast\,:\,q(u_0,s)\in F\}\]
			is called a \emph{language accepted by $\mathcal{A}$}.
			A language $\mathcal{L}$ is said to be \emph{regular} if it is accepted by a DFA.  
		
			If $a,b,c\in\mathcal{L}$ such that $a=bc$, then we say that $b$ is a \emph{prefix} and that $c$ is a \emph{suffix} of $a$. 
			We call $\mathcal{L}$ \emph{prefix--closed} if it contains every prefix of its elements.  
			A \emph{suffix--closed} language is defined analogously.
		\end{definition}

		We note that a letter-to-letter finite state transducer (as well as a DFAO or DFA) can be represented by a graph. 
		Indeed the states of the graph are the states of the transducer. 
		Moreover, there is a labelled edge  $u_1\xrightarrow{(a,b)} u_2$ in this graph  if and only if $q(u_1,a)=u_2$ and $\delta(u_1,a)=b$. 
		See Figure~\ref{transex} for an example of a graph representation of a transducer.
		
		\begin{theorem}\label{lang} 
			Let $(P/Q,\mathcal{D})$ be a rational function based digit system. 
			If $\deg Q \ge 1$ then the language $\mathcal{L}_{P/Q}=\{\expan{w}_{P/Q}\,:\, w\in\F_q[X]\}$ is not regular and not suffix--closed. 
			Nevertheless, $\mathcal{L}_{P/Q}$ is prefix--closed.
		\end{theorem}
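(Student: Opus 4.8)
The plan is to handle the three claims separately, the only genuine difficulty being non-regularity. For prefix--closedness I would read the statement off the division algorithm~\eqref{e3}: if $\expan{w}_{P/Q}=s_k\cdots s_0$, then running~\eqref{e3} shows that the intermediate polynomial $w_j$ obtained after $j$ steps has digit string $\expan{w_j}_{P/Q}=s_k\cdots s_j$ for every $0\le j\le k$. Thus each prefix of $\expan{w}_{P/Q}$ is again the digit string of a polynomial in $\F_q[X]$ and hence lies in $\mathcal{L}_{P/Q}$.

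For the failure of suffix--closedness a single bad suffix suffices. As $P$ and $Q$ are coprime, $Q$ is invertible modulo $P$, so the least significant digit $s_0=Qw\bmod P$ runs through all of $\mathcal{D}$ as $w$ ranges over $\F_q[X]$; in particular there is a $w$ whose expansion ends in the digit $1$. Since $\deg Q\ge1$, the one--letter string $1$ satisfies $\pi(1)=1/Q\notin\F_q[X]$ by~\eqref{evalmap}, so $1\notin\mathcal{L}_{P/Q}$; were $\expan{w}_{P/Q}$ equal to this single letter we would have $w=1/Q\notin\F_q[X]$, so in fact $\expan{w}_{P/Q}$ has length at least two. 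Hence $1$ is a proper suffix of an element of $\mathcal{L}_{P/Q}$ that is itself excluded from $\mathcal{L}_{P/Q}$.

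Non-regularity is the main obstacle, and I would attack it with the pumping lemma combined with the arithmetic of the base $\rho:=P/Q$. Assuming $\mathcal{L}_{P/Q}$ is regular with pumping length $p$, I choose $w$ with $|\expan{w}_{P/Q}|>p$ (possible since the expansion length grows with $\deg w$) and pump the most significant block, obtaining a factorisation $\expan{w}_{P/Q}=xyw'$ with $|y|=b\ge1$ and $xy^{n}w'\in\mathcal{L}_{P/Q}$, hence $\pi(xy^{n}w')\in\F_q[X]$, for all $n\ge0$. Summing the resulting geometric series in $\rho^{b}$ yields the closed form
\[
\pi(xy^{n}w')=\alpha+\beta\,\rho^{nb}\qquad(n\ge0),
\]
with $\alpha,\beta\in\F_q(X)$ independent of $n$. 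Taking successive differences gives $\gamma\,\rho^{nb}\in\F_q[X]$ for all $n$, where $\gamma:=\beta(\rho^{b}-1)$; writing $\rho^{nb}=P^{nb}/Q^{nb}$ and using $\gcd(P,Q)=1$ this means $Q^{nb}\mid\gamma$ for every $n$, so the fixed polynomial $\gamma$ must vanish because $\deg Q\ge1$ makes $\deg Q^{nb}$ unbounded. As $\rho^{b}\neq1$ we get $\beta=0$, whence $\pi(xy^{n}w')=\alpha$ is independent of $n$; by the uniqueness in Theorem~\ref{integerRepresentation} all the strings $xy^{n}w'$ would then coincide, contradicting $|xy^{n}w'|=|x|+nb+|w'|\to\infty$. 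I expect the explicit evaluation producing the closed form $\alpha+\beta\rho^{nb}$ together with this coprimality/divisibility step to be the technical heart of the proof.
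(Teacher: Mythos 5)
Your proposal is correct, and the prefix-- and suffix--closedness parts match the paper's (your suffix argument is in fact a bit better: the paper merely points at Table~\ref{tab1}, which covers only one example, whereas your observation that $Q$ is invertible modulo $P$ produces a polynomial whose expansion ends in the non-expansion digit $1$ for any $(P/Q,\mathcal{D})$ with $\deg Q\ge 1$). For non-regularity, however, you take a genuinely different route. The paper avoids pumping altogether: it considers the sets $R_k(w)$ of length-$k$ right-extensions of $\expan{w}_{P/Q}$ inside $\mathcal{L}_{P/Q}$, shows via coprimality of $P$ and $Q$ that a common extension of $\expan{v}_{P/Q}$ and $\expan{w}_{P/Q}$ forces $v\equiv w\pmod{Q^k}$, and concludes that the Myhill--Nerode relation has infinite index because $\deg Q\ge 1$ gives arbitrarily many residue classes modulo $Q^k$. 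Your pumping-lemma argument instead sums the geometric series to get $\pi(xy^nw')=\alpha+\beta\rho^{nb}$, kills $\gamma=\beta(\rho^b-1)$ by the divisibility $Q^{nb}\mid\gamma$ for all $n$ (note $\gamma=\pi(xyw')-\pi(xw')$ is indeed a fixed polynomial, as you need), and then invokes uniqueness of expansions to contradict the growing lengths. Both proofs ultimately rest on the same arithmetic fact --- a fixed polynomial divisible by unboundedly high powers of $Q$ must vanish --- but the Myhill--Nerode version is shorter and isolates the structural reason for non-regularity (an automaton would have to remember residues modulo $Q^k$ for all $k$), while yours is more self-contained and elementary, at the cost of the explicit closed-form computation and the final appeal to Theorem~\ref{integerRepresentation}.
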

		\begin{proof}
			A look at Table~\ref{tab1} shows that $\mathcal{L}_{P/Q}$ is not suffix-closed. 
			The fact that $\mathcal{L}_{P/Q}$ is prefix--closed follows from the expansion algorithm in~\eqref{e3}. 
			It remains to prove nonregularity.
			
			Denote the number of letters of a word $s$ by $|s|$ and define for $w\in \F_q[X]$ and $k\in \mathbb{N}$ the set
			\[R_k(w)=\{s\in\mathcal{D}^\ast\,:\,|s|=k\text{ and }\langle w\rangle_{P/Q}s\in\mathcal{L}_{P/Q}\},\]
			which is nonempty by the definition of $\mathcal{L}_{P/Q}$.
			Let $v,w\in \F_q[X]$. 
			We claim that a word $s\in \mathcal{D}^\ast$ with $|s|=k$ can belong to $R_k(v)\cap R_k(w)$ only if $v\equiv w\pmod{Q^k}$. 
			Indeed if $s\in R_k(v)\cap R_k(w)$ then $(P/Q)^kv+\pi(s),(P/Q)^kw+\pi(s)\in\F_q[X]$. 
			Taking differences and multiplying by $Q^k$, the fact that $P$ and $Q$ are coprime yields that $v\equiv w\pmod{Q^k}$. 
			
			On the language $\mathcal{L}_{P/Q}$ the \emph{Myhill-Nerode equivalence relation $\mathcal{R}$} is defined by $\langle v\rangle_{P/Q}\mathcal{R}\langle w \rangle_{P/Q}$ 
			if and only if for all $s\in\mathcal{D}^\ast$ one has $\langle v\rangle_{P/Q}s\in \mathcal{L}_{P/Q}$ if and only if $\langle w\rangle_{P/Q}s\in\mathcal{L}_{P/Q}$. 
			It follows from the previous paragraph that $\mathcal{R}$ has infinite index. 
			Thus the  classical Theorem of Myhill--Nerode (see~{\it e.g.}~\cite[Theorem~4.1.8]{AS}) implies that $\mathcal{L}_{P/Q}$ is not regular. 		
		\end{proof}
				
		Even if the language $\mathcal{L}_{P/Q}$ is not regular, we see in the subsequent results that it still has several automatic properties. 
		First recall the following definition.
		
		\begin{definition}[{$\F_q$-automatic function; {\it cf.}~\cite[p.~138]{AS}}]
			Let $\Delta$ be a nonempty finite set. 
			We say that a function $f:\F_q[X]\rightarrow\Delta$ is an \emph{$\F_q$-automatic finite state function} (or \emph{$\F_q$-automatic} for short) 
			if $f(w)$ can be realized as the result function of a DFAO with input alphabet $\F_q$ and whose input is the string $w_k\cdots w_1w_0$ 
			that is formed from the coefficients of $w=w_0+w_1X+\cdots+w_kX^k\in\F_q[X]$.
		\end{definition}

		We shall need the following auxiliary lemma.  Recall that there is a canonical bijection between $\mathcal{D}=\set{s\in\F_q[X]\,:\,\deg{s}<\deg{P}}$ and $\F_q[X]/P$ 
		that assigns to each digit in $\mathcal{D}$ its residue class modulo $P$.

		\begin{lemma}\label{l4}
			Let $P,Q\in\F_q[X]$ with $P\ne 0$. Then the following assertions hold:
			\begin{enumerate}
				\item The function $f:\F_q[X]\to\mathcal{D}$, $f(w)=Qw\imod{P}$ is $\F_q$-automatic.
				
				\item If $f_1,f_2:\F_q[X]\to\mathcal{D}$ are $\F_q$-automatic functions and $c\in\F_q(X)$, then $f_1\pm f_2$ and $cf_1$ are
				$\F_q$-automatic.
			\end{enumerate}
		\end{lemma}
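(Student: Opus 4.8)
The plan is to prove (i) by an explicit construction of a DFAO and to deduce (ii) from the standard closure properties of automatic functions under synchronized products and relabelings of output functions (\emph{cf.}~\cite{AS}), exploiting throughout that the codomain $\mathcal{D}\cong\F_q[X]/P$ is finite.

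For (i), recall the reading convention: the string $w_k\cdots w_1w_0$ is read from right to left, hence from the least significant coefficient $w_0$ first. The obstacle is that in this direction the coefficient $w_j$ enters $w$ with weight $X^j$, and $j$ is unbounded, so a single accumulated residue cannot be updated by a fixed transition. I would circumvent this by letting the automaton carry, besides the residue read so far, the current weight $X^j\bmod P$. Concretely, take the state set $\mathcal{Q}=(\F_q[X]/P)\times(\F_q[X]/P)$, which is finite since $\deg P$ is fixed; writing a state as a pair $(\rho,\tau)$, declare the initial state $(0,1)$ and the transition $q\bigl((\rho,\tau),a\bigr)=\bigl((\rho+a\tau)\bmod P,\;(X\tau)\bmod P\bigr)$ for $a\in\F_q$. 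A one–line induction shows that after reading $w_0,\dots,w_{j-1}$ the first coordinate equals $\bigl(\sum_{i<j}w_iX^i\bigr)\bmod P$ and the second equals $X^j\bmod P$; hence once the whole input is consumed the first coordinate is $w\bmod P$. Since $\rho\equiv w\pmod P$ forces $Q\rho\equiv Qw\pmod P$, the output function $\delta(\rho,\tau)=Q\rho\bmod P$, read in $\mathcal{D}$ through the canonical bijection $\F_q[X]/P\cong\mathcal{D}$, returns exactly $f(w)=Qw\imod{P}$. This exhibits $f$ as the result function of a DFAO, so $f$ is $\F_q$-automatic.

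For (ii), the point is that each operation is a \emph{fixed} map on the finite set into which $f_1,f_2$ take values. First I would run the DFAOs for $f_1$ and $f_2$ in parallel on the same input, forming the product automaton with state set $\mathcal{Q}_1\times\mathcal{Q}_2$, componentwise transitions, and output $(u,v)\mapsto(\delta_1(u),\delta_2(v))$; this DFAO computes $w\mapsto(f_1(w),f_2(w))$. Post-composing its output function with the fixed map $(a,b)\mapsto a\pm b$, which lands in $\mathcal{D}$ since degrees do not increase under addition, yields a DFAO for $f_1\pm f_2$. For $cf_1$ with $c\in\F_q(X)$, the product $c\,f_1(w)$ is to be read in $\F_q[X]/P$: provided the denominator of $c$ is prime to $P$ (as holds in all our applications, where $\gcd(P,Q)=1$), multiplication by $c$ is a well-defined map $\F_q[X]/P\to\F_q[X]/P$, hence a fixed map $\mathcal{D}\to\mathcal{D}$ via the bijection, and post-composing the output function of the DFAO for $f_1$ with it produces a DFAO for $cf_1$. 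The only genuinely delicate point is thus the state design in (i): the least-significant-first reading forces the running weight $X^j\bmod P$ into the state, whereas a most-significant-first reading would need only the single-residue Horner update; part (ii) is then routine, resting solely on the finiteness of $\mathcal{D}$, which keeps products of automata and output relabelings inside the class of DFAOs.
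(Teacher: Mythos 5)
Your proof of part (i) is correct but takes a genuinely different route from the paper's. The paper first invokes the equivalence of direct and reverse reading for automatic functions (\cite[Theorem~4.3.3]{AS}) and then builds a DFAO that consumes the coefficient string most-significant-coefficient-first, so that a single residue class suffices as state, updated by the Horner rule $q(A,a)=AX+a \bmod P$ with output $\delta(A)=QA\bmod P$. You instead keep the least-significant-first reading and pay for it by enlarging the state to the pair $\bigl(\rho,\tau\bigr)=\bigl(\text{partial sum}\bmod P,\;X^j\bmod P\bigr)$. Both constructions are valid; yours is more self-contained (no appeal to the reversal theorem), while the paper's automaton is smaller ($q^{\deg P}$ states against your $q^{2\deg P}$). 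Your one-line induction does establish the claimed invariant, so (i) stands.

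For part (ii), which the paper dismisses as trivial, your product-automaton-plus-output-relabeling mechanism is exactly the right one for $f_1\pm f_2$, but your treatment of $cf_1$ introduces an unnecessary and in fact harmful restriction. You interpret $c\,f_1(w)$ as an element of $\F_q[X]/P$ and consequently require the denominator of $c$ to be coprime to $P$. The statement carries no such hypothesis, and the paper's own application of (ii) in the proof of Theorem~\ref{th2} takes $c=P^{-m}$, whose denominator is certainly not coprime to $P$; under your reading that step would not be covered. The intended reading is simply that $cf_1$ takes values in the finite set $c\mathcal{D}\subset\F_q(X)$ (multiplication by a fixed nonzero $c$ is injective on $\mathcal{D}$), and post-composing a DFAO's output function with any fixed map into a finite output alphabet again yields a DFAO; no reduction modulo $P$ is involved and no coprimality is needed. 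With that correction (ii) is immediate and covers all the uses made of it later in the paper.
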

		\begin{proof}
			  To prove (i) let $w=w_0+w_1X+\cdots+w_kX^k\in\F_q[X]$.
				A standard result of automata theory yields that a function is $\F_q$-automatic in direct reading if and only if 
				it is $\F_q$-automatic in reverse reading ({\it cf.}~\cite[Theorem~4.3.3]{AS}). 
				Let ${\mathcal A}=(\mathcal{Q},\Sigma,q,u_0,\Delta,\delta)$ be the DFAO defined as follows: $\mathcal{Q}=\Delta=\mathcal{D}$, $\Sigma=\F_q$, $u_0=0$,	
				\[
				\begin{array}{rrclrcl}
					q:&\mathcal{Q}\times\Sigma&\mapsto&\mathcal{Q}, &q(A,a)&=&AX+a\imod{P},\\			
					\delta:&\mathcal{Q}&\mapsto&\Delta, &\delta(A)&=&QA\imod{P}.
				\end{array}
				\]
				Then direct calculation shows that $f(w)=\delta(u_{k+1})$ if $\mathcal{A}$ is feeded with the (reverse) input string $w_0w_1\cdots w_k$.
				
				Since (ii) is trivial the proof is finished.
		\end{proof}

		For a given $P/Q$-digit system let $s^{(m)}:\F_q[X]\to\mathcal{D}$ be the function that picks the digit $s_m$ from the $P/Q$-polynomial digit expansion \eqref{e4}, 
		{\it i.e.}, for $w\in\F_q[X]$ with $\expan{w}_{P/Q}=(s_k\cdots s_0)_{P/Q}$ we set $s^{(m)}(w)=s_m$ for $m\le k$ and $s^{(m)}(w)=0$ otherwise.
		
		\begin{theorem}\label{th2}
			Let $(P/Q,\mathcal{D})$ be a rational function based digit system. Then, for each $m\geq 0$, the function $s^{(m)}$ is $\F_q$-automatic.
		\end{theorem}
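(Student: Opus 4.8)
The plan is to show that the value $s_m = s^{(m)}(w)$ depends only on the residue class of $w$ modulo $P^{m+1}$, and then to deduce automaticity from a version of Lemma~\ref{l4}(i) with the modulus $P^{m+1}$ in place of $P$.

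First I would make the finite dependence precise using the expansion algorithm~\eqref{e3}. Recall that it produces $s_i = Qw_i \bmod P$ and $w_{i+1} = (Qw_i - s_i)/P$ with $w_0 = w$. I claim that for each $i\in\{0,\dots,m\}$ the residue $w_i \bmod P^{m+1-i}$, and hence the digit $s_i$, is a fixed function of $w \bmod P^{m+1}$. This I would prove by induction on $i$. The base case is $w_0 \bmod P^{m+1} = w \bmod P^{m+1}$. For the step, note that $s_i = Qw_i \bmod P$ is determined by $w_i \bmod P$, while $Qw_i - s_i$ is divisible by $P$, so the quotient $w_{i+1} = (Qw_i - s_i)/P$ is determined modulo $P^{m-i}$ by $Qw_i - s_i$ modulo $P^{m+1-i}$, hence by $w_i$ modulo $P^{m+1-i}$; since $m-i = m+1-(i+1)$, the induction closes. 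Thus each division by $P$ in~\eqref{e3} costs exactly one power of $P$, and running the algorithm through the $m$-th step consumes only the information contained in $w \bmod P^{m+1}$. Consequently there is a fixed map $\psi\colon \F_q[X]/(P^{m+1}) \to \mathcal{D}$ with $s^{(m)}(w) = \psi(w \bmod P^{m+1})$.

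Next I would prove that the reduction map $w \mapsto w \bmod P^{m+1}$ is $\F_q$-automatic. This uses the same construction as the proof of Lemma~\ref{l4}(i), with $P$ replaced by $P^{m+1}$: take the DFAO with state set and output alphabet $\F_q[X]/(P^{m+1})$, input alphabet $\F_q$, initial state $0$, transition $q(A,a) = AX + a \bmod P^{m+1}$, and output $\delta(A) = A$; feeding it the coefficient string of $w$ leaves $w \bmod P^{m+1}$ in the final state. Finally, since $w \mapsto w \bmod P^{m+1}$ is realized by a DFAO, post-composing its output function with the fixed map $\psi$ yields a DFAO computing $s^{(m)}$; hence $s^{(m)}$ is $\F_q$-automatic. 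This closure step is the general form of Lemma~\ref{l4}(ii): any fixed function of finitely many $\F_q$-automatic functions is again $\F_q$-automatic.

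I expect the only step requiring genuine care to be the bookkeeping in the second paragraph, namely verifying that the single division by $P$ at each stage of~\eqref{e3} lowers the needed power of $P$ by exactly one, so that the fixed finite modulus $P^{m+1}$ really does determine $s_m$. Once this finite dependence is established, the automaticity of $s^{(m)}$ follows routinely from (the proof of) Lemma~\ref{l4}.
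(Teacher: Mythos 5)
Your proof is correct, but it takes a genuinely different route from the paper's. The paper proceeds by induction on $m$: from the telescoped identity $Q^{m+1}w = P^{m+1}w^{(m+1)} + \sum_{j=0}^{m} P^{j}Q^{m-j}s^{(j)}(w)$ it solves for $P^{m}s^{(m)}(w)$ modulo $P^{m+1}$ as an $\F_q[X]$-linear combination of the already-automatic functions $s^{(0)},\dots,s^{(m-1)}$ and the automatic reduction $w\mapsto Q^{m+1}w \bmod P^{m+1}$, then invokes the closure properties of Lemma~\ref{l4}~(ii) to divide by $P^m$. You instead prove directly that $s^{(m)}(w)$ factors through the single reduction $w\mapsto w\bmod P^{m+1}$ --- your bookkeeping that each division by $P$ in~\eqref{e3} costs exactly one power of $P$ is sound, since $P(w_{i+1}-w_{i+1}')\equiv 0 \pmod{P^{m+1-i}}$ forces $w_{i+1}\equiv w_{i+1}'\pmod{P^{m-i}}$ in the integral domain $\F_q[X]$ --- and then realize that reduction by one DFAO with state set $\F_q[X]/(P^{m+1})$ and post-compose its output with the fixed map $\psi$. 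Both arguments hinge on the same modulus $P^{m+1}$, but yours replaces the induction on automaticity and the linear-combination closure step by a single explicit automaton together with the trivial closure of DFAOs under recoding of the output function; this is arguably cleaner, as it sidesteps the slightly delicate intermediate functions $P^{j}Q^{m-j}s^{(j)}(w)$ (whose values leave $\mathcal{D}$) and makes the state complexity $q^{(m+1)\deg P}$ explicit. What the paper's route buys in exchange is the reusable identity~\eqref{e6} and the closure lemma, which it leans on elsewhere. In either case one must remember the paper's convention of switching between direct and reverse reading via~\cite[Theorem~4.3.3]{AS}, which you correctly inherit from the proof of Lemma~\ref{l4}~(i).
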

		\begin{proof}
			The functions $s^{(m)}$ are defined recursively for each $w$.
 			Indeed, we let \mbox{$w^{(0)}=w$} and by~\eqref{e3},
			\begin{equation}\label{e5}
				Qw^{(m)}=Pw^{(m+1)}+s^{(m)}(w),\text{ with }\deg{s^{(m)}(w)}<\deg{P}.
			\end{equation}
			Induction yields
			\begin{equation}\label{e6}
				Q^{m+1}w^{(0)}=P^{m+1} w^{(m+1)}+\sum_{j=0}^{m}P^jQ^{m-j}s^{(j)}(w).
			\end{equation}
			Now, we prove by induction on $m$ that $s^{(m)}$ is $\F_q$-automatic.

			In the case $m=0$ it follows from \eqref{e5} that $s^{(0)}(w)=Qw\imod{P}$. 
			Hence, $s^{(0)}$ is $\F_q$-automatic by Lemma~\ref{l4}~(i).

			Suppose that $m>0$ and that $s^{(j)}$ is $\F_q$-automatic for $j<m$. 
			From Lemma~\ref{l4}~(i), it follows that the function $f(w)=Q^{m+1}w\imod{P^{m+1}}$ is $\F_q$-automatic. 
			By the induction hypothesis and Lemma~\ref{l4}~(ii), each function $P^jQ^{m-j}s^{(j)}(w)$ is $\F_q$-automatic for $0\leq j<m$.
			By~\eqref{e6}, we have
			\[P^ms^{(m)}(w)=Q^{m+1}w^{(0)}-\sum_{j=0}^{m-1}P^jQ^{m-j}s^{(j)}(w)\imod{P^{m+1}},\]
			which is $\F_q$ automatic by Lemma~\ref{l4}~(ii).
			Thus, we see by applying Lemma~\ref{l4}~(ii) again that $s^{(m)}(w)=P^{-m}P^ms^{(m)}(w)$ is also $\F_q$-automatic.
		\end{proof}		
		
		Consider an order $\sqsubseteq$ of the finite field $\F_q$ that starts with $0$ and~$1$.  
		It induces a lexicographic order $\sqsubseteq$ on $\F_q[X]$ defined as follows: 
		Given polynomials $v=v_0+v_1X+\cdots+v_kX^k$ and $w=w_0+w_1X+\cdots+w_{\ell}X^{\ell}$ in $\F_q[X]$,
		then $v\sqsubset w$ if $k<\ell$, or if $k=\ell$ and there exists $i$ such that $v_i\sqsubset w_i$ with $v_j=w_j$ for all $i<j\leq k$.
		
		A substitution rule $\varrho:\Sigma\to\Sigma^{\ast}$ on an alphabet $\Sigma$ is said to be \emph{$k$-uniform} 
		if it maps every element of $\Sigma$ to a string of $\Sigma^{\ast}$ of length $k$. 
		Denote by $a_n$ the $(n+1)$-st element of $\F_q[X]$ arranged according to the lexicographic order $\sqsubseteq$ on $\F_q[X]$. 
		With this notation we have $a_{qm+r}=Xa_m+a_r$ for $0\le r < q$,  $\F_q=\{a_0,\ldots, a_{q-1}\}$, and $\mathcal{D}=\{a_0,\ldots, a_{q^{\deg P }-1}\}$.
		Define the alphabet by $\Sigma=\mathcal{D}$ (which is again identified with the set of residue classes modulo $P$). 
		Then, using the abbreviation $R_i=Xa_i\imod{P}$, we define the $q$-uniform substitution $\varrho:\mathcal{D}\to\mathcal{D}^{\ast}$ by 
		(letters are separated by commas) 
		\begin{equation}\label{eq:rho}
			\varrho(a_i)= R_i+a_0Q,R_i+a_1Q,\ldots,R_i+a_{q-1}Q\qquad (0\leq i<q^{\deg{P}}).
		\end{equation}
		The next theorem states that the sequence $(s^{(0)}(a_n))$ can be obtained from this $q$-uniform substitution rule $\varrho$.
			
		\begin{theorem}\label{firstdigit}
			Let $(P/Q,\mathcal{D})$ be a rational function based digit system and 
			let $\varrho_0\varrho_1\cdots$ be the fixed point of the substitution $\varrho$ defined in~\eqref{eq:rho} that starts with $0$. 
			Then the sequence $(s^{(0)}(a_n))_{n\ge 0}$ of first digits of the $P/Q$-polynomial digit expansions of elements of $\F_q[X]$ 
			that are ordered by the lexicographic order $\sqsubseteq$ is equal to $\varrho_0\varrho_1\cdots$.
		\end{theorem}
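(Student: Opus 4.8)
The plan is to reduce the statement to a direct verification that the sequence $(s^{(0)}(a_n))_{n\ge 0}$ obeys exactly the block-recursion characterising the fixed point of the $q$-uniform substitution $\varrho$. First I would recall from the base case in the proof of Theorem~\ref{th2} that the first digit is given by the explicit formula $s^{(0)}(w)=Qw\imod{P}$. Writing $t_n\vcentcolon= s^{(0)}(a_n)$ and identifying each digit with its residue class modulo $P$, this means $t_n=Qa_n\imod{P}$. On the other hand, since $\varrho$ is $q$-uniform, its fixed point $u=\varrho_0\varrho_1\cdots$ starting with $0$ is the unique sequence satisfying $u_0=0$ together with the block-identity
\[
u_{qm+r}=\bigl(\varrho(u_m)\bigr)_r\qquad(0\le r<q,\ m\ge 0),
\]
where, by~\eqref{eq:rho}, the $r$-th letter of $\varrho(a_i)$ is $\bigl(\varrho(a_i)\bigr)_r=R_i+a_rQ\imod{P}$. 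Hence it suffices to check that $(t_n)$ satisfies these same two conditions; uniqueness of the fixed point then gives $(t_n)=u$, which is the assertion.

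The base case is immediate: since the order $\sqsubseteq$ starts with $0$ and $1$ we have $a_0=0$, whence $t_0=Qa_0\imod{P}=0=u_0$. For the recursion I would use the relation $a_{qm+r}=Xa_m+a_r$ recorded before~\eqref{eq:rho}. Regarding $t_m$ as the digit $a_j\in\mathcal D$ determined by $a_j\equiv Qa_m\pmod P$, and using $R_j=Xa_j\imod P$, the $r$-th letter of $\varrho(t_m)$ satisfies
\[
R_j+a_rQ\equiv Xa_j+a_rQ\equiv XQa_m+a_rQ=Q(Xa_m+a_r)=Qa_{qm+r}\pmod P.
\]
Both $\bigl(\varrho(t_m)\bigr)_r=R_j+a_rQ\imod P$ and $t_{qm+r}=Qa_{qm+r}\imod P$ lie in $\mathcal D$ and are congruent modulo $P$, so they coincide. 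Thus the block-identity holds verbatim, and combining the two checks yields $(t_n)=u$.

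The computation itself is short, so the points that require care are conceptual rather than arithmetical. First, one must keep the identification of the alphabet $\mathcal D$ with $\F_q[X]/P$ consistent throughout, so that the reductions $\imod P$ appearing in $\varrho$ genuinely land in $\mathcal D$ and the congruence manipulations above are legitimate. Second, one should invoke the standard fact that reading the fixed point of a $q$-uniform morphism in consecutive blocks of length $q$ recovers $\varrho$ applied letter-by-letter (\emph{cf.}~\cite{AS}), which is what makes the block-recursion a valid characterisation of $u$. With these in place, the heart of the matter is precisely that the identity $a_{qm+r}=Xa_m+a_r$ makes the first-digit map $w\mapsto Qw\imod P$ intertwine the passage from index $m$ to its block $qm,\dots,qm+q-1$ with the action of $\varrho$; I expect verifying this intertwining to be the only genuinely substantive step.
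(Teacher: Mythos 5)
Your proof is correct and takes essentially the same approach as the paper's: both rest on the formula $s^{(0)}(w)=Qw\imod{P}$, the identity $a_{qm+r}=Xa_m+a_r$, and the block structure of the fixed point of the $q$-uniform substitution $\varrho$. Your appeal to uniqueness of the sequence satisfying the block-recursion is just a repackaging of the paper's explicit induction on $n$.
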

		\begin{proof}
			We proceed by induction.
			Clearly, $\varrho_0=0=s^{(0)}(a_0)$.
			Suppose that $s^{(0)}(a_j)=\varrho_j$ for $j<n$.  As noted above, writing $n=qm+r$ with $r\in\set{0,\ldots,q-1}$, we have $a_n=Xa_m+a_r$.			
			As~\eqref{e3} implies $s^{(0)}(a_r)=a_rQ$ and $s^{(0)}(Xa_m)=Xs^{(0)}(a_m)\imod{P}$ we obtain
			\[s^{(0)}(a_n)=Xs^{(0)}(a_m)\imod{P}+a_rQ=X\varrho_m\imod{P}+a_rQ=\varrho_n,\]		
			where the last equality follows from the definition of $\varrho$.	
		\end{proof}
		
		\begin{remark}\label{cobhams}
			Since the sequence $(s^{(0)}(a_n))$ is a fixed point of a $q$-uniform substitution, 
			it follows from Cobham's Theorem ({\it cf.}~\cite[Theorem~6.3.2]{AS}) that it is $q$-automatic.
			That is, there exists a DFAO that gives $s^{(0)}(a_n)$ as output when the input is the string $w_0\cdots w_k$, where
			\[n=\sum_{i=0}^k{w_iq^i}\]
			with $w_i\in\set{0,\ldots,q-1}$ for $0\leq i\leq k$.
		\end{remark}

		\begin{example}\label{firstdigitex}
			If $P=X^2+1$ and $Q=X$ in $\F_2[X]$, then $a_0=0$, $a_1=1$, $a_2=X$, and $a_3=X+1$.
			According to Theorem~\ref{firstdigit} the sequence $(s^{(0)}(a_n))$ is the fixed point of the $2$-uniform substitution $\varrho$ starting at $a_0$, where
			\[
				\varrho(a_0)=a_0a_2\;,\;
				\varrho(a_1)=a_2a_0\;,\;
				\varrho(a_2)=a_1a_3\;,\;
				\varrho(a_3)=a_3a_1.
			\]
			Indeed,  
			\[
				(s^{(0)}(a_n))=(a_0a_2a_1a_3a_2a_0a_3a_1a_1a_3a_0a_2a_3a_1a_2a_0a_2a_0a_3a_1a_0a_2a_1\cdots).
			\]
		\end{example}
		
		Our next aim is to obtain an analogue of the so-called odometer for rational function based digit systems. 
		This involves describing how to determine the expansion of $n+1$ from the expansion of $n$. 
		However, this is trivial for $P/Q$-digit systems because no ``carry'' occurs.
		Indeed, if $w\in\F_q[X]$ then the $P/Q$-polynomial digit expansion of $w+1$ is obtained by adding
		$\expan{1}_{P/Q}=Q$ to the least significant digit of $\expan{w}_{P/Q}$.
		It is more interesting to look instead at how to perform multiplication by $X$. 		
		
		In the remaining part of this section we view elements $s\in\mathcal{D}^{\ast}$ as $\varepsilon s$, where $\varepsilon$ is the empty string.  
		That is, if an input string $s=s_k\cdots s_0$ to the letter-to-letter finite state transducer $\mathcal{A}=(\mathcal{Q},\mathcal{D},q,u_0,\mathcal{D},\delta)$ 
		ends at the final state $u_k\in\mathcal{Q}$ and yields the output string $d_k\cdots d_0\in\mathcal{D}^{\ast}$, 
		then $\delta$ has to be defined at $(u_k,\varepsilon)$ and the final output string of $\mathcal{A}$ is 
		$\mathcal{A}(s_k\cdots s_0)\vcentcolon=\delta(u_k,\varepsilon)d_k\cdots d_0$ ({\it cf.}~\cite{AFS}).

		\begin{theorem}\label{th3}
			Let $(P/Q,\mathcal{D})$ be a rational function based digit system.
			Then the function $\mathcal{L}_{P/Q}\rightarrow\mathcal{L}_{P/Q}$, $\expan{w}_{P/Q}\mapsto\expan{Xw}_{P/Q}$ 
			is realizable by a letter-to-letter finite state transducer.
		\end{theorem}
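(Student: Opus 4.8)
The plan is to mimic the classical construction of a multiplication transducer by tracking a bounded ``carry'' as we sweep the digit string from the least to the most significant position---which is exactly the direction in which our transducers read. Starting from $\expan{w}_{P/Q}=(s_k\cdots s_0)_{P/Q}$ we have $Xw=\sum_{i=0}^{k}\frac{Xs_i}{Q}(P/Q)^i$, but $Xs_i$ need not lie in $\mathcal{D}$ since its degree can be as large as $\deg P$. To repair this I would define, recursively, a digit $t_i$ and a carry $c_{i+1}$ by setting $c_0=0$ and, at each position $i$, writing $Xs_i+c_i=t_i+m_iP$ with $t_i=(Xs_i+c_i)\imod{P}\in\mathcal{D}$ and $m_i=\floor{(Xs_i+c_i)/P}$, and then putting $c_{i+1}=m_iQ$.

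The reason for choosing the carry in the form $c_{i+1}=m_iQ$ is the telescoping identity $\frac{Xs_i+c_i}{Q}(P/Q)^i=\frac{t_i}{Q}(P/Q)^i+\frac{c_{i+1}}{Q}(P/Q)^{i+1}$, which is immediate from $\frac{m_iP}{Q}(P/Q)^i=\frac{m_iQ}{Q}(P/Q)^{i+1}$. Summing over $i$, the carry contributions telescope, and since $c_0=0$ I would obtain $Xw=\sum_i\frac{t_i}{Q}(P/Q)^i$; the uniqueness statement (Theorem~\ref{integerRepresentation}) then identifies the $t_i$ as the genuine digits of $\expan{Xw}_{P/Q}$.

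The heart of the argument---and the step I expect to be the main obstacle---is to show that only finitely many carries occur, for this is precisely what makes the device finite state. I would prove by induction that $\deg c_i\le\deg Q$: indeed $\deg(Xs_i)\le\deg P$ because $\deg s_i<\deg P$, while the inductive hypothesis gives $\deg c_i\le\deg Q<\deg P$, so $\deg(Xs_i+c_i)\le\deg P$ and hence $\deg m_i\le 0$, that is, $m_i\in\F_q$. Consequently $c_{i+1}=m_iQ$ ranges over the $q$-element set $\set{aQ:a\in\F_q}$, giving a finite state space.

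Finally I would assemble the transducer $\mathcal{A}$ with state set $\mathcal{Q}=\set{aQ:a\in\F_q}$, initial state $u_0=0$, input and output alphabet $\mathcal{D}$, transition $q(c,s)=\floor{(Xs+c)/P}Q$ and result $\delta(c,s)=(Xs+c)\imod{P}$; fed with $s_0,\ldots,s_k$ it outputs exactly $t_0,\ldots,t_k$. It remains to emit the leftover carry: after the last letter $\mathcal{A}$ rests in the state $c_{k+1}=m_kQ$, and since $\deg c_{k+1}\le\deg Q<\deg P$ one further step of the recursion (with input digit $0$) produces the single additional digit $c_{k+1}\in\mathcal{D}$ together with a vanishing carry. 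I would therefore set $\delta(c,\varepsilon)=c$ in accordance with the $\varepsilon$-convention, so that $\mathcal{A}(s_k\cdots s_0)$ equals $\expan{Xw}_{P/Q}$ up to a possible leading zero (occurring exactly when $c_{k+1}=0$), which is then discarded.
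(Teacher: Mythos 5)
Your proposal is correct and is essentially the paper's own construction: the paper also uses a transducer whose state records the scalar $m_i\in\F_q$ (so that the carry injected at the next position is $m_iQ$), defines the output digit as $Xs_i-m_iP+(\text{previous carry})$, and emits the leftover carry via $\delta(u,\varepsilon)=uQ$. The only cosmetic differences are that the paper takes the state to be the element of $\F_q$ rather than the polynomial $aQ$, observes directly that the new state depends only on the input digit (namely it is $({s_i})_{m-1}/p_m$, which your degree bound $\deg c_i\le\deg Q<\deg P$ also shows), and verifies correctness by induction on the string length under the evaluation map rather than by your telescoping identity plus uniqueness.
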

		
		\begin{proof}
			Let $\mathcal{A}=(\mathcal{Q},\mathcal{D},q,u_0,\mathcal{D},\delta)$ be the letter-to-letter finite state transducer, 
			where $\mathcal{Q}=\F_q$ and $u_0=0$.
			The transition function $q$ is defined as follows: if 
			\[P=p_0+\cdots+p_m X^m\quad\text{and}\quad t=t_{0}+\dots+t_{m-1}X^{m-1}\in\mathcal{D},\]
			then $u'\vcentcolon=q(u,t)$ where
			\[
			u'=\frac{t_{m-1}}{p_m}.
			\]
			Observe that $u'$ depends solely on the digit $t$.
			Finally, the result function $\delta$ is given by $d\vcentcolon=\delta(u,t)$ where
			\[
			d=Xt-(t_{m-1}/p_m)P+uQ=Xt-u'P+uQ,
			\]
			and if $u\in\mathcal{Q}$, then $\delta(u,\varepsilon)\vcentcolon =uQ$.
			Note that $\delta$ is well defined because both \mbox{$\deg(Xt-u'P)$} and $\deg{Q}$ are less than $\deg{P}$.  
			
			The proof is done by induction on the length of $\expan{w}_{P/Q}$.
			For the induction start consider $\expan{w}_{P/Q}=s_0\in\mathcal{D}$. 
			If $u_1=q(0,s_0)$, then we obtain 
			\begin{align*}
				\mathcal{A}(s_0)     &=(u_1Q,Xs_0-u_1P), \text{ and}\\
				\pi(\mathcal{A}(s_0))&=\frac{Xs_0-u_1P}{Q}+\frac{u_1Q}{Q}\lt(\frac{P}{Q}\rt)=X\frac{s_0}{Q}=X\pi(s_0),
			\end{align*}
			where $\pi$ is the evaluation map defined in \eqref{evalmap}.
			
			Assume that the statement holds for any polynomial over $\F_q$ whose $P/Q$-digit expansion is of length $k$.
			Consider a polynomial $w\in\F_q[X]$ of length $k+1$ and write $\expan{w}_{P/Q}=s_k\cdots s_1s_0$.
			We now proceed to show that $\expan{Xw}_{P/Q}=\mathcal{A}(s_k\cdots s_1s_0)$, or equivalently, $\pi(\mathcal{A}(s_k\cdots s_1s_0))=Xw$.
			To this end, let $\mathcal{A}(s_k\cdots s_1s_0)=d_{k+1}\cdots d_1d_0$.  Recall that the initial state is $u_0=0$, 
			\begin{equation}\label{outputk+1}
				u_{i+1}\vcentcolon=q(u_i,s_i)\qquad\text{and}\qquad d_i=Xs_i-u_{i+1}P+u_iQ
			\end{equation}
			for $0\leq i\leq k$, and $d_{k+1}=u_{k+1}Q$.
			
			Now, since $\mathcal{L}_{P/Q}$ is prefix-closed by Theorem~\ref{lang}, we have $s_k\cdots s_1\in\mathcal{L}_{P/Q}$. 
			Hence, there exists $w'\in\F_q[X]$ such that $\expan{w'}_{P/Q}=s_k\cdots s_1$ and satisfying
			\begin{equation}\label{induction}
				w=\frac{P}{Q}w'+\frac{s_0}{Q}.
			\end{equation}
			Suppose $\mathcal{A}(s_k\cdots s_1)=d_{k+1}'\cdots d_1'$.  In this case, the initial state is $u_1'=0$,
			\begin{equation}\label{outputk}
				u_{i+1}'\vcentcolon=q(u_i',s_i)\qquad\text{and}\qquad d_i'=Xs_i-u_{i+1}'P+u_i'Q
			\end{equation}
			for $1\leq i\leq k$, and $d_{k+1}'=u_{k+1}'Q$.
			Since the transition function $q$ does not depend on the state, we obtain from~\eqref{outputk+1} and~\eqref{outputk} that $u_i'=u_i$ for $2\leq i\leq k$.  
			Thus, we also have $d_i'=d_i$ for $2\leq i\leq k$.
			We also see from ~\eqref{outputk+1} and~\eqref{outputk} that $d_0$ and $d_1$ are given by
			\begin{align*}
				d_0&=Xs_0-u_1P, \text{ and}\\
				d_1&=Xs_1-u_2P+u_1Q=(d_1'+u_2'P)-u_2P+u_1Q=d_1'+u_1Q.
			\end{align*}
			Since 
			\[\pi(\mathcal{A}(s_k\cdots s_1))=\sum_{i=1}^{k+1}\frac{d_i'}{Q}{\lt(\frac{P}{Q}\rt)}^{i-1}=Xw'\]
			holds by the induction hypothesis, we finally have
			\begin{align*}
				\pi(\mathcal{A}(s_k\cdots s_1s_0))&=\sum_{i=0}^{k+1}\frac{d_i}{Q}{\lt(\frac{P}{Q}\rt)}^i\\
				&=\frac{Xs_0-u_1P}{Q}+\frac{d_1'+u_1Q}{Q}\lt(\frac{P}{Q}\rt)+\sum_{i=2}^{k+1}\frac{d_i'}{Q}{\lt(\frac{P}{Q}\rt)}^i\\
				&=\frac{Xs_0}{Q}+\frac{P}{Q}\sum_{i=1}^{k+1}\frac{d_i'}{Q}{\lt(\frac{P}{Q}\rt)}^{i-1}\\
				&=\frac{Xs_0}{Q}+\frac{P}{Q}(Xw')\\
				&=\frac{Xs_0}{Q}+X\lt(w-\frac{s_0}{Q}\rt)=Xw,
			\end{align*}		
			where the last equality follows from~\eqref{induction}.
		\end{proof}

		\begin{example}
			Recall that if $P=X^2+1, Q=X\in\F_2[X]$, then $\mathcal{D}=\set{0,1,X,X+1}$.  
			Figure~\ref{transex} shows the representation of the transducer that realizes multiplication by $X$ in this $P/Q$-digit system.		
						
			\begin{figure}[ht]
				\includegraphics[height=3.5cm]{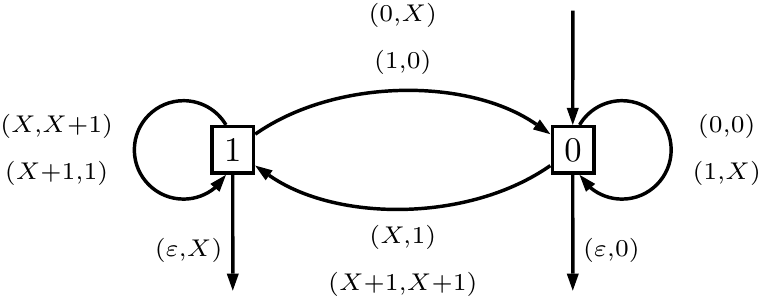}				
				\caption{The transducer for multiplication by $X$ in the $P/Q$-digit system with $P=X^2+1,Q=X\in\F_2[X]$ 
				(see Definition~\ref{transducer} and the remark after it for a definition of this graph; 
				the edges labelled by $(\varepsilon,X)$ and $(\varepsilon,0)$ are explained before Theorem~\ref{th3}). }
				\label{transex}
			\end{figure}
			
			For instance, since $\expan{X^2}_{P/Q}=(X,1,X)_{P/Q}$, we obtain from Figure~\ref{transex} that $\expan{X^3}_{P/Q}=(X,1,0,1)_{P/Q}$.
		\end{example}
		
		We note the following consequence of Theorem~\ref{th3}.
		
		\begin{corollary}\label{multbyR}
			Let $(P/Q,\mathcal{D})$ be a rational function based digit system and  $R\in\F_q[X]$.
			Then the function $\mathcal{L}_{P/Q}\rightarrow\mathcal{L}_{P/Q}$, $\expan{w}_{P/Q}\mapsto\expan{Rw}_{P/Q}$ is realizable by a letter-to-letter finite state transducer.
		\end{corollary}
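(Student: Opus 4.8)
The plan is to build the transducer for multiplication by $R$ out of the transducer $T_X$ for multiplication by $X$ furnished by Theorem~\ref{th3}, together with two elementary ``memoryless'' operations whose correctness rests on the uniqueness of digit expansions (Theorem~\ref{integerRepresentation}). Writing $R=r_0+r_1X+\cdots+r_nX^n$ with $r_j\in\F_q$, I would realize $\expan{w}_{P/Q}\mapsto\expan{Rw}_{P/Q}$ as a composition and parallel combination of transducers for the pieces $r_jX^j$.

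First I would record the two building blocks. For \emph{scalar multiplication} by $c\in\F_q$, observe that if $\expan{w}_{P/Q}=(s_k\cdots s_0)_{P/Q}$ then, since $\deg(cs_i)\le\deg s_i<\deg P$, each $cs_i$ lies in $\mathcal{D}$; hence $cw=\sum_i (cs_i)/Q\,(P/Q)^i$ and by uniqueness $\expan{cw}_{P/Q}=(cs_k\cdots cs_0)_{P/Q}$. Thus $\expan{w}_{P/Q}\mapsto\expan{cw}_{P/Q}$ is realized by the single-state transducer applying $t\mapsto ct$ to each letter. For \emph{addition}, reading two strings $\expan{u}_{P/Q}$ and $\expan{v}_{P/Q}$ synchronously from the least significant end (padding the shorter with leading zeros), the digit in position $i$ of $u+v$ is $a_i+b_i$; as $\deg(a_i+b_i)<\deg P$ this again lies in $\mathcal{D}$, so by uniqueness $\expan{u+v}_{P/Q}$ is obtained by letterwise addition, with no carry occurring. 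This is realized by a single-state transducer on the alphabet $\mathcal{D}\times\mathcal{D}$.

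Next I would assemble these. Because the class of functions realizable by letter-to-letter finite state transducers is closed under composition (the usual cascade product), iterating $T_X$ $j$ times yields a transducer realizing $\expan{w}_{P/Q}\mapsto\expan{X^jw}_{P/Q}$ for each $j\ge0$; composing with the scalar transducer for $r_j$ gives a transducer $T_{r_jX^j}$ realizing $\expan{w}_{P/Q}\mapsto\expan{r_jX^jw}_{P/Q}$. Running the finitely many transducers $T_{r_0},T_{r_1X},\ldots,T_{r_nX^n}$ in parallel on the common input $\expan{w}_{P/Q}$ and adding their outputs position-wise (via the addition transducer above) produces, at position $i$, the digit $\sum_{j=0}^n c^{(j)}_i$, where $c^{(j)}_i\in\mathcal{D}$ is the position-$i$ digit of $\expan{r_jX^jw}_{P/Q}$. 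Since a sum of at most $n+1$ elements of $\mathcal{D}$ still has degree $<\deg P$, this digit lies in $\mathcal{D}$, and the identity $\sum_j r_jX^jw=Rw$ shows, by uniqueness, that the resulting string is exactly $\expan{Rw}_{P/Q}$. The product of finitely many letter-to-letter transducers followed by a letterwise sum is again such a transducer, which proves the claim.

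The main obstacle is the bookkeeping of length growth: multiplication by $X$ lengthens the digit string (the leading $\delta(u_k,\varepsilon)$ term in the proof of Theorem~\ref{th3}), so the outputs of $T_{r_jX^j}$ for different $j$ have different lengths and the high-order positions must be aligned correctly before summing. This is handled within the letter-to-letter framework by the $\varepsilon$-convention introduced before Theorem~\ref{th3}: once the input is exhausted, the product transducer performs a bounded number (at most $n=\deg R$) of $\varepsilon$-steps to emit and sum the remaining leading digits. As this number is fixed, the combined automaton stays finite-state, and reading every input from the least significant end guarantees that position $i$ in each component refers to the same power $(P/Q)^i$, so the position-wise addition is legitimate.
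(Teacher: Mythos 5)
Your proposal is correct and follows essentially the same route as the paper: the paper's proof likewise reduces multiplication by $R$ to iterated applications of the multiplication-by-$X$ transducer of Theorem~\ref{th3}, closure of letter-to-letter transducers under composition, and the observation that addition of $P/Q$-expansions is digitwise and carry-free. Your write-up merely spells out the scalar-multiplication step, the parallel product, and the length/alignment bookkeeping that the paper leaves implicit.
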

		\begin{proof}
			The corollary is an immediate consequence of Theorem ~\ref{th3}, the composition theorem in \cite[Chapter~IV, Proposition~6.10]{Sakarovitch2009}, 
			and the fact that addition in the $P/Q$-digit system can be performed digitwise without carry by a letter-to-letter finite state transducer.
		\end{proof}

	\section{The expansion graph $T(P/Q)$}\label{sec:4}
		Let $(P/Q,\mathcal{D})$ be a rational function based digit system. 
		Since the language $\mathcal{L}_{P/Q}$ formed by the $P/Q$-polynomial digit expansions of elements of $\F_q[X]$ is prefix-closed by Theorem~\ref{lang}, 
		we will be able to define a graph $T(P/Q)$ containing the expansions of $(P/Q,\mathcal{D})$.
		Just as in the rational analogue considered in~\cite{AFS} we shall use this graph to construct unique $P/Q$-digit expansions of formal Laurent series over $\F_q$.
		
		\begin{definition}[Expansion graph]
			The \emph{expansion graph} $T(P/Q)=(V,E)$ is the edge-labelled infinite directed graph  
			satisfying $V=\F_q[X]$, and $(v,w)\in E$ with label $s\in\mathcal{D}$ whenever
			\[w=\frac{Pv+s}{Q}\in\F_q[X].\]
		\end{definition}

		Denote by $p(v)$ the string formed by the labels of the edges from the root $0$ to the node $v$ in $T(P/Q)$.
		It follows from~\eqref{e3} that $p(v)=\expan{v}_{P/Q}$. 

		The main difference of the graph $T(P/Q)$ from its rational analogue is that the number of outgoing edges for each node in $T(P/Q)$ is the same.

		\begin{theorem}\label{ktree}
			Let $(P/Q,\mathcal{D})$ be a rational function based digit system.
			The graph $T(P/Q)$ has a single loop which is a self-loop at $0$. Each node has $r$ outgoing edges, where $r=q^{\deg{P}-\deg{Q}}$.
		\end{theorem}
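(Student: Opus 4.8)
The plan is to treat the two assertions separately, beginning with the out-degree count, which is the more computational one. Fix a node $v\in\F_q[X]$. By the definition of $T(P/Q)$, the outgoing edges at $v$ are in bijection with the digits $s\in\mathcal{D}$ for which $(Pv+s)/Q\in\F_q[X]$, that is, with those $s$ satisfying the congruence $s\equiv -Pv \pmod{Q}$; distinct digits yield distinct targets $w=(Pv+s)/Q$, so no two edges are identified. It therefore suffices to count the $s\in\mathcal{D}$ lying in a prescribed residue class modulo $Q$, represented by the unique $c$ with $\deg c<\deg Q$. Writing $s=c+Qt$, the constraint $\deg s<\deg P$ forces $\deg t<\deg P-\deg Q$ (when $t\neq 0$ the term $Qt$ dominates $c$ since $\deg c<\deg Q\le\deg(Qt)$), and there are exactly $q^{\deg P-\deg Q}=r$ polynomials $t$ of degree below $\deg P-\deg Q$. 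This produces the claimed $r$ outgoing edges at every node.

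For the loop structure, the key step I would isolate is a \emph{degree-increase lemma}: if $v\to w$ is an edge with $v\neq 0$, then $\deg w=\deg v+(\deg P-\deg Q)>\deg v$. This follows at once from $Qw=Pv+s$: since $\deg s<\deg P\le\deg(Pv)$, the leading term of $Pv+s$ is that of $Pv$, so $\deg(Qw)=\deg P+\deg v$, and subtracting $\deg Q$ gives the formula; in particular $w\neq 0$, so an edge starting at a nonzero node ends at a nonzero node. Two consequences then follow. First, the only edge into $0$ is the self-loop: an edge $v\to 0$ forces $Pv+s=0$, which is impossible for $v\neq 0$ by degrees, so $v=0$, whence $s=0$ and we obtain the unique self-loop at the root. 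Second, there is no nontrivial cycle: in a putative cycle $v_0\to\cdots\to v_\ell=v_0$ the degrees would strictly increase at every step as long as the nodes stay nonzero, which is absurd; hence some node equals $0$, its predecessor must then also be $0$ by the first consequence, and inductively the entire cycle collapses to the self-loop at $0$.

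I do not expect a serious obstacle here, since the whole argument rests on the single degree-increase lemma, everything else being bookkeeping. The one point demanding care is the counting step, where one must confirm that the residue representative $c$ never interferes with the degree bound (this is guaranteed precisely by $\deg Q<\deg P$), so that the count is exactly $r$ and not $r\pm 1$. In the cycle argument one should also note that the degenerate length-one case, namely a self-loop at a nonzero $v$, is already excluded by the lemma, so the two consequences together pin down the single loop as the self-loop at $0$.
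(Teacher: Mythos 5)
Your proposal is correct and follows essentially the same route as the paper: the out-degree is counted by identifying the labels of edges leaving $v$ with the digits $s\in\mathcal{D}$ in the residue class of $-Pv$ modulo $Q$ (you merely make the count $s=c+Qt$, $\deg t<\deg P-\deg Q$, explicit), and the loop structure is settled by the same degree-increase observation from $Qw=Pv+s$. No gaps.
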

		\begin{proof}
			Consider a fixed $v\in\F_q[X]$.
			Note that $(v,w)\in E$ has label $s$ exactly when 
			\begin{equation}\label{minimal}
				s\equiv -Pv\imod{Q}. 
			\end{equation}
			In addition, $w$ is uniquely determined by $v$ and $s$. 
			Thus, the number $r$ of outgoing edges of $v$ is equal to the number of elements of $\set{s\in\mathcal{D}\,:\, s\equiv-Pv\imod{Q}}$.
			Hence, $r=q^{\deg{P}-\deg{Q}}$ as $\mathcal{D}$ contains all polynomials of degree less than $\deg{P}$. 
			The fact that $0$ is the only loop follows because for each edge $u\to v$ with $v\neq 0$ in $T(P/Q)$ we have $\deg{u}<\deg{v}$.
		\end{proof} 
		
		We say that a node $v$ of $T(P/Q)$ is of \emph{level $\ell$} if the shortest path between $0$ and $v$ is of length $\ell$. 
		By Theorem~\ref{ktree} there are $r^{\ell-1}$ nodes of level $\ell$ in $T(P/Q)$.
		In addition, these nodes are precisely the polynomials over $\F_q$ of degree $n$ where $(\ell-1)\cdot\deg(P/Q)\leq n<\ell\cdot\deg(P/Q)$.
		
		Let $L(v)$ be the set of the $r$ labels on outgoing edges from the node $v$.  
		It follows from~\eqref{minimal} that $L(v)$ contains exactly one digit of degree less than $\deg{Q}$.			
		If we denote this digit by $m_v$, then
		\begin{equation}\label{edgelabels}
			L(v)=\{m_v+dQ\,:\, d\in\F_q[X], \deg{d}<\deg(P/Q)\}.
		\end{equation}
		Thus, $L(v)$ is completely determined by $m_v$.  
		Hence, given $v,w\in\F_q[X]$, we have $L(v)=L(w)$ if and only if $v\equiv w\imod{Q}$.
		Consequently, there are exactly $q^{\deg{Q}}$ possible sets $L(v)$.
		
		\begin{example}\label{extree}
			If $P=X^2+1$ and $Q=X$ in $\F_2[X]$, then $r=2$.  The graph $T(P/Q)$ is shown in Figure~\ref{tree}.
			
			\begin{figure}[ht]	
			\includegraphics[height=13cm]{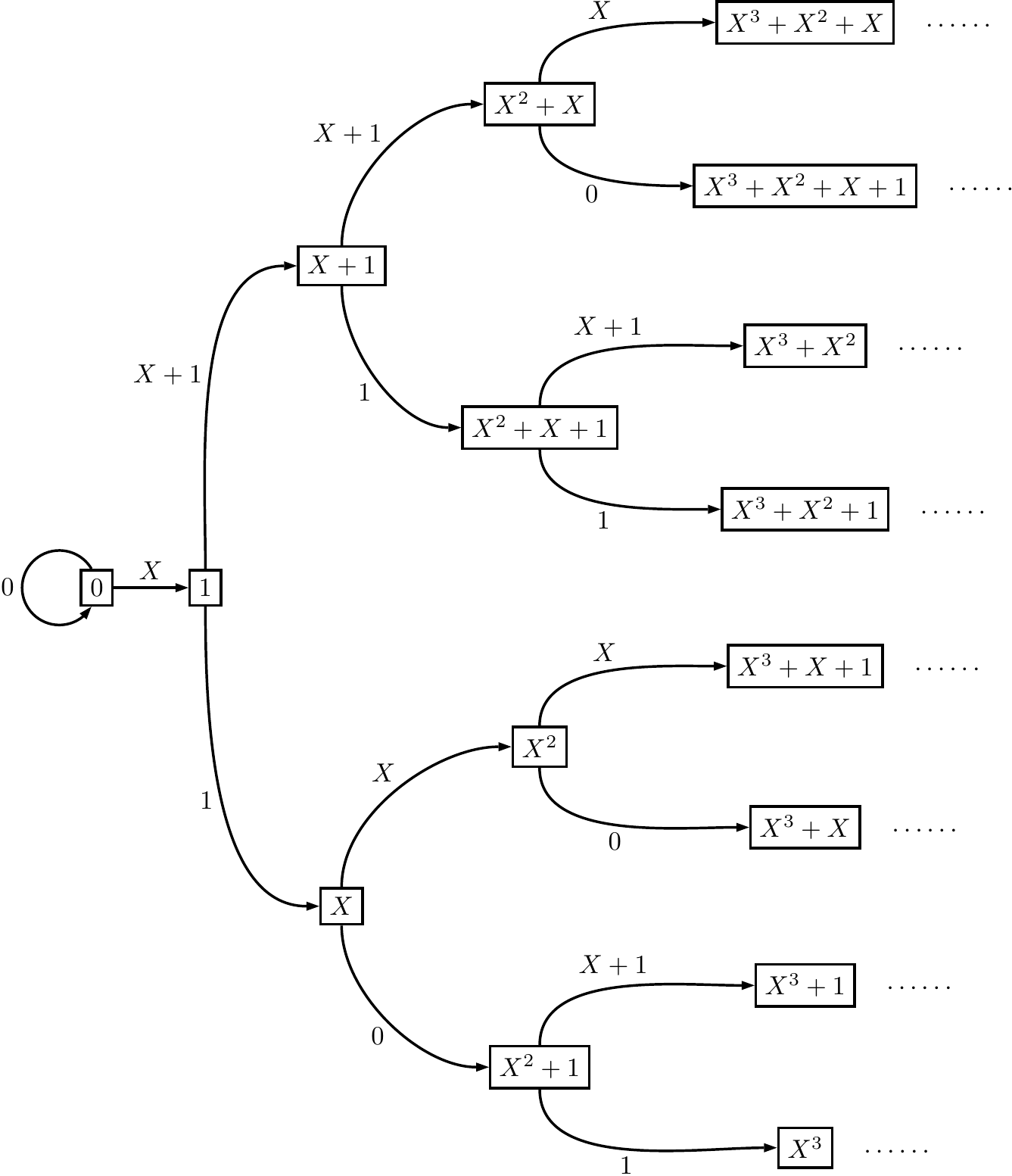}	
			\caption{The graph $T(P/Q)$, where ${P=X^2+1}, {Q=X}\in\F_2[X]$.}\label{tree}
			\end{figure}
		\end{example}		
		
		We define a lexicographic order $\sqsubseteq$ on $\mathcal{L}_{P/Q}$, namely, if $s,t\in\mathcal{L}_{P/Q}$ with $s=s_k\cdots s_0$ and $t=t_{\ell}\cdots t_0$, 
		then $s\sqsubset t$ if $k<\ell$, or if $k=\ell$ and there exists $i$ such that $s_i\sqsubset t_i$ and $s_j=t_j$ for all $i<j\leq k$. 
		The following remark is immediate from~\eqref{e4}.
		
		\begin{remark}\label{treeorder}
			If $v,w\in\F_q[X]$ such that $p(v)\sqsubseteq p(w)$, then $\deg v\le \deg w$.  
		\end{remark}
		
		Remark~\ref{treeorder} implies that the nodes of level $\ell$ in $T(P/Q)$, 
		when arranged with respect to the lexicographic order of their path labels from the root, are ordered according to the partial order on $\F_q[X]$ induced by the degree. 
		However, in general, nodes of the same level are not ordered lexicographically on $\F_q[X]$, see Example~\ref{extree}.

		\begin{definition}[Language of the expansion graph]
			Denote by $\mathcal{D}^{\omega}$ the set of one-sided right-infinite strings on $\mathcal{D}$.
			The subset $W_{P/Q}$ of $\mathcal{D}^{\omega}$ formed by the labels of all infinite paths starting from the root of $T(P/Q)$ 
			is called the \emph{$\omega$-language of the expansion graph $T(P/Q)$}.
		\end{definition}		

		The next result tells us that similar to the language $\mathcal{L}_{P/Q}$, the $\omega$-language $W_{P/Q}$ is not regular 
		(for details and definitions we refer \emph{e.g.} to~\cite{L97,W90}).
		
		\begin{theorem}\label{omegalang}
			Let $(P/Q,\mathcal{D})$ be a rational function based digit system.  
			If $\deg{Q}\geq 1$ then the $\omega$-language $W_{P/Q}$ is not regular.
		\end{theorem}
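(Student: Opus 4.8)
The plan is to deduce the non-regularity of the $\omega$-language $W_{P/Q}$ from the already established non-regularity of the finite-word language $\mathcal{L}_{P/Q}$ (Theorem~\ref{lang}). The bridge between the two is the standard fact from the theory of $\omega$-automata (see \emph{e.g.}~\cite{L97,W90}) that the \emph{prefix language} of an $\omega$-regular language is regular: if $W\subseteq\mathcal{D}^{\omega}$ is accepted by a B\"uchi automaton $\mathcal{B}=(\mathcal{Q},\mathcal{D},\delta,q_0,F)$, then the set $\mathrm{Pref}(W)=\{u\in\mathcal{D}^{\ast}:u\alpha\in W\text{ for some }\alpha\in\mathcal{D}^{\omega}\}$ is regular. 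Thus it suffices to identify $\mathrm{Pref}(W_{P/Q})$ with $\mathcal{L}_{P/Q}$ and to invoke Theorem~\ref{lang}.

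First I would verify that $\mathrm{Pref}(W_{P/Q})=\mathcal{L}_{P/Q}$. By definition $W_{P/Q}$ consists of the labels of the infinite paths of $T(P/Q)$ issuing from the root $0$, so $\mathrm{Pref}(W_{P/Q})$ is the set of labels of those finite paths from the root that extend to an infinite path. By Theorem~\ref{ktree} every node of $T(P/Q)$ has $r=q^{\deg P-\deg Q}\ge 1$ outgoing edges; consequently every finite path starting at the root can be prolonged indefinitely, so \emph{every} finite path extends to an infinite one. Hence $\mathrm{Pref}(W_{P/Q})$ is exactly the set of all finite path labels from the root, which by the identity $p(v)=\expan{v}_{P/Q}$ equals $\{\expan{v}_{P/Q}:v\in\F_q[X]\}=\mathcal{L}_{P/Q}$.

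To justify the prefix fact itself I would argue as follows. Let $G\subseteq\mathcal{Q}$ be the set of states from which $\mathcal{B}$ admits an accepting run, that is, those states that can reach a cycle passing through an accepting state of $F$; this set is computed by ordinary reachability in the transition graph. A finite word $u$ lies in $\mathrm{Pref}(W)$ precisely when some run of $\mathcal{B}$ on $u$ starting at $q_0$ ends in a state of $G$, a condition recognized by a finite automaton (apply the subset construction to the underlying nondeterministic automaton with final-state set $G$). Combining this with the identification of the previous paragraph, a B\"uchi automaton for $W_{P/Q}$ would yield a DFA for $\mathcal{L}_{P/Q}$, contradicting Theorem~\ref{lang} since $\deg Q\ge 1$; therefore $W_{P/Q}$ is not regular.

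The main obstacle is the prefix theorem: one must be careful that the identification $\mathrm{Pref}(W_{P/Q})=\mathcal{L}_{P/Q}$ is genuinely an equality and not merely an inclusion, which is why the extension step---every finite path prolongs to an infinite path, guaranteed by $r\ge 1$ in Theorem~\ref{ktree}---is essential. Everything else reduces to routine bookkeeping with the automaton.
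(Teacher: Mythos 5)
Your proposal follows essentially the same route as the paper: both reduce the statement to the non-regularity of $\mathcal{L}_{P/Q}$ (Theorem~\ref{lang}) by passing to the prefix language of $W_{P/Q}$; where the paper invokes \cite[Proposition~4]{HJ04}, you prove the required automata-theoretic fact (regularity of the prefix language of a B\"uchi-recognizable $\omega$-language) directly, and that argument is sound. There is, however, one concrete error in your identification step: $\operatorname{Pref}(W_{P/Q})$ is not equal to $\mathcal{L}_{P/Q}$ but to $0^{\ast}\mathcal{L}_{P/Q}$ (as the paper states), because an infinite path from the root may begin by traversing the self-loop at $0$ an arbitrary number of times before leaving it, so finite path labels from the root carry leading zeros that $P/Q$-polynomial digit expansions never have. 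For instance, with $P=X^2+1$, $Q=X$ the word $0X$ labels a path $0\to 0\to 1$ and hence lies in $\operatorname{Pref}(W_{P/Q})$, yet it is not of the form $\expan{v}_{P/Q}$: by uniqueness (Theorem~\ref{integerRepresentation}) the only candidate would be $v=\pi(0X)=1$, whose expansion is $\expan{1}_{P/Q}=X$. The slip is harmless for your conclusion: since the only element of $\mathcal{L}_{P/Q}$ beginning with the digit $0$ is the one-letter word $0$, one has $\mathcal{L}_{P/Q}=\bigl(0^{\ast}\mathcal{L}_{P/Q}\cap(\mathcal{D}\setminus\{0\})\mathcal{D}^{\ast}\bigr)\cup\{0\}$, so regularity of $0^{\ast}\mathcal{L}_{P/Q}$ would still force regularity of $\mathcal{L}_{P/Q}$ and the contradiction with Theorem~\ref{lang} goes through; but the equality as you stated it should be corrected.
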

		\begin{proof}
			Denote by $\operatorname{Pref}(t)$ the set of all finite prefixes of an infinite string $t$.
			By the definition of $W_{P/Q}$, 
			\[\operatorname{Pref}(W_{P/Q})\vcentcolon=\bigcup_{t\in W_{P/Q}}\operatorname{Pref}(t)=0^{\ast}\mathcal{L}_{P/Q}.\]
			Then, it follows from Theorem~\ref{lang} that $\operatorname{Pref}(W_{P/Q})$ is prefix-closed.  
			Moreover, every string $s\in \operatorname{Pref}(W_{P/Q})$ is a proper prefix of another string of $\operatorname{Pref}(W_{P/Q})$ by Theorem~\ref{ktree}.
			Since the language $\mathcal{L}_{P/Q}$ is not regular by Theorem~\ref{lang}, the same holds true for $\operatorname{Pref}(W_{P/Q})$ (\emph{cf.}~\cite[Lemma 5.2.5]{AS}).
			These properties of $\operatorname{Pref}(W_{P/Q})$ imply that the $\omega$-language $W_{P/Q}$ is not regular by~\cite[Proposition 4]{HJ04}.						
		\end{proof}
		
		The following result about the infinite strings of $W_{P/Q}$ is the analogue of~\cite[Proposition 26]{AFS}.
		
		\begin{theorem}\label{eventperiod}
			Let $(P/Q,\mathcal{D})$ be a rational function based digit system and assume that $\deg Q\ge 1$. Then
			the only eventually periodic element of $W_{P/Q}$ is $0^{\omega}\vcentcolon= 00\cdots$.
		\end{theorem}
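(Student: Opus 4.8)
The plan is to work directly with the infinite path in $T(P/Q)$ underlying a given eventually periodic word and to exploit the rigidity coming from the coprimality of $P$ and $Q$ together with the hypothesis $\deg Q\ge 1$. So suppose $t=c_1c_2c_3\cdots\in W_{P/Q}$ is eventually periodic, say $c_{n+p}=c_n$ for all $n\ge n_0$ (with $n_0\ge 1$) and period length $p\ge 1$, and let $0=v_0,v_1,v_2,\dots$ be the nodes of the corresponding infinite path, so that $Qv_j=Pv_{j-1}+c_j$ for every $j\ge 1$ by the definition of the edges of $T(P/Q)$. I want to show $t=0^{\omega}$. Before starting I would record two consequences of Theorem~\ref{ktree}: the only loop of $T(P/Q)$ is the self-loop at $0$, and any edge $u\to v$ with $v\ne 0$ satisfies $\deg u<\deg v$; in particular the only edge entering $0$ is this self-loop, so a path that has once left $0$ can never return to it.

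The heart of the argument is a difference trick. I would set $D_j\vcentcolon= v_{j+p}-v_j\in\F_q[X]$ and subtract the defining relations at the indices $j+p$ and $j$. For $j\ge n_0$ the periodicity $c_{j+p}=c_j$ makes the digit terms cancel, leaving $QD_j=PD_{j-1}$. Iterating this from the seed $D_{n_0-1}$ yields $D_{n_0-1+\ell}=(P/Q)^{\ell}D_{n_0-1}$ for all $\ell\ge 0$. Since every $D_{n_0-1+\ell}$ is a polynomial and $\gcd(P,Q)=1$, this forces $Q^{\ell}\mid D_{n_0-1}$ for all $\ell\ge 0$. This is exactly the point where the hypothesis $\deg Q\ge 1$ is used: a nonzero polynomial cannot be divisible by arbitrarily high powers of a polynomial of positive degree, so necessarily $D_{n_0-1}=0$, and hence $D_j=0$ for every $j\ge n_0-1$. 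In other words, the sequence of nodes is $p$-periodic from index $n_0-1$ onwards.

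It then remains to upgrade this periodicity to the nodes being identically zero. The relations $v_{j+p}=v_j$ for $j\ge n_0-1$ say that $v_{n_0-1}\to v_{n_0}\to\cdots\to v_{n_0-1+p}=v_{n_0-1}$ is a closed walk in $T(P/Q)$; since the only loop in the graph is the self-loop at $0$, such a walk can visit only the node $0$, so $v_j=0$ for all $j\ge n_0-1$. Finally, using that the unique edge entering $0$ is the self-loop at $0$, I would backtrack from $v_{n_0-1}=0$ through $v_{n_0-2},\dots,v_1$ to conclude $v_j=0$ for every $j\ge 0$. Consequently every label $c_j$ is the label $0$ of the self-loop, that is, $t=0^{\omega}$, as claimed.

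The main obstacle is the divisibility step: one has to set up the period-difference recurrence correctly and then realize that coprimality of $P$ and $Q$ turns the relation $(P/Q)^{\ell}D_{n_0-1}\in\F_q[X]$ into unbounded $Q$-divisibility of $D_{n_0-1}$, which collapses to $D_{n_0-1}=0$ precisely because $\deg Q\ge 1$. This matches the necessity of the hypothesis, since for $\deg Q=0$ the digit language is the full shift and eventually periodic words are plentiful. A secondary, easily overlooked point is that periodicity of the nodes alone does not yield $0^{\omega}$; one genuinely needs the ``no return to $0$'' property both to force the periodic block to vanish and to propagate the vanishing back to the root.
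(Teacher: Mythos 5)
Your proof is correct and is essentially the paper's own argument: the paper telescopes $\pi(ab^n)-\pi(ab^{n-1})=(P/Q)^{(n-1)\ell}\bigl(\pi(ab)-\pi(a)\bigr)$ and uses coprimality together with $\deg Q\ge 1$ to force $\pi(ab)=\pi(a)$, which is exactly your period-difference recurrence $QD_j=PD_{j-1}$ written at the level of the nodes $v_j=\pi(c_1\cdots c_j)$. The only divergence is the finish --- the paper invokes Lemma~\ref{th1} to conclude that all digits vanish, while you use the tree structure of $T(P/Q)$; just note that ``the only edge entering $0$ is the self-loop'' is not literally contained in Theorem~\ref{ktree}, though it follows from the one-line check that $0=(Pu+s)/Q$ with $\deg s<\deg P$ forces $u=s=0$.
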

		\begin{proof}
			Suppose $abb\cdots\in W_{P/Q}$, where $a\in 0^{\ast}\mathcal{L}_{P/Q}$ and $b\in\mathcal{D}^{\ast}$ of length $\ell > 0$.  
			Then for all $n\in\N$, $ab^n\in 0^{\ast}\mathcal{L}_{P/Q}$.  This implies that
			\[\pi(ab^n)-\pi(ab^{n-1})=\left(\frac{P}{Q}\right)^{(n-1)\ell}(\pi(ab)-\pi(a))\in\F_q[X].\]
			Since $P$ and $Q$ are coprime, we have $Q^{(n-1)\ell}$ divides $\pi(ab)-\pi(a)\in\F_q[X]$ for all $n\in\N$. 
			As $\deg Q \ge 1$ it follows from Lemma~\ref{th1} that this is only possible if $abb\cdots=0^{\omega}$.
		\end{proof}

		Recall from Theorem~\ref{firstdigit} that if the $(n+1)$-st element of $\F_q[X]$ 
		according to the lexicographic order is denoted by $a_n$,
		then the sequence $(s^{(0)}(a_n))$ of least significant digits of $P/Q$-polynomial digit expansions of $\F_q[X]$ is the fixed point of a substitution.  
		We now look at the digit sequences of the $P/Q$-polynomial digit expansions of $\F_q[X]$ with respect to a different ordering of elements of $\F_q[X]$.

		Suppose $b_n$ is the $(n+1)$-st element of $\F_q[X]$ arranged according to the lexicographic order on the $(P/Q)$-polynomial digit expansions of elements of $\F_q[X]$.
		It follows from Theorem~\ref{ktree} that the elements of $\F_q[X]$ whose expansions are of length $(\ell +1)$ are 
		$b_{r^{\ell}},\ldots,b_{r^{\ell+1}-1}$, where $r=q^{\deg(P/Q)}$.
		In addition, for all $t\in\N$ and $k\in\set{0,\ldots,r-1}$, we have 
		\begin{equation}\label{bseq}
			b_{rt+k}=\frac{P}{Q}b_t+\frac{s_0}{Q}
		\end{equation}
		for some $s_0\in L(b_t)$.  This implies that 
		$s^{(1)}(b_{rt+k})=s^{(0)}(b_t)$ for all $k\in\set{0,\ldots,r-1}$. 			
		By induction, we obtain that for all $t,m\in\N$ and $k\in\set{0,\ldots,r^m-1}$,
		\[b_{r^mt+k}={\Big(\frac{P}{Q}\Big)}^mb_t+\sum_{i=0}^{m-1}\frac{s_i}{Q}{\Big(\frac{P}{Q}\Big)}^i\]
		for some $s_i\in \mathcal{D}$.
		Thus, we obtain the following result about the digit sequences $(s^{(m)}(b_n))_{n\geq 0}$.
			
		\begin{lemma}\label{otherdigits}
			Let $(P/Q,\mathcal{D})$ be a rational function based digit system and
			$b_n$ be the $(n+1)$-st element of $\F_q[X]$ that are ordered according to the lexicographic order on the $(P/Q)$-polynomial digit expansions of elements of $\F_q[X]$.
			Then for all $t,m\in\N$ and $k\in\set{0,\ldots,r^m-1}$, $s^{(m)}(b_{r^mt+k})=s^{(0)}(b_t)$.
		\end{lemma}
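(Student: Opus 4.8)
The plan is to prove the identity $s^{(m)}(b_{r^m t + k}) = s^{(0)}(b_t)$ by induction on $m$, leveraging the recursive structure of the ordering $b_n$ that is already developed in the text preceding the statement. The base case $m=0$ is immediate, since then $k$ ranges only over $\{0\}$ and the claim reads $s^{(0)}(b_t) = s^{(0)}(b_t)$. The crucial input is the relation \eqref{bseq}, which expresses each $b_{rt+k}$ as $\frac{P}{Q}b_t + \frac{s_0}{Q}$ for some $s_0 \in L(b_t)$. By the uniqueness of the $P/Q$-polynomial digit expansion (Theorem~\ref{integerRepresentation}), comparing this with the expansion \eqref{e4} of $b_{rt+k}$ shows that its least significant digit is $s_0 \in \mathcal{D}$ while its remaining digits are precisely the digits of $b_t$, shifted up by one position. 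In particular $s^{(i+1)}(b_{rt+k}) = s^{(i)}(b_t)$ for all $i \ge 0$, and specializing to $i=0$ recovers the one-step relation $s^{(1)}(b_{rt+k}) = s^{(0)}(b_t)$ noted in the text.

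For the inductive step, suppose the claim holds for some $m \ge 0$, that is $s^{(m)}(b_{r^m t' + k'}) = s^{(0)}(b_{t'})$ for all $t' \in \N$ and $k' \in \set{0,\ldots,r^m-1}$. I would then take an index of the form $r^{m+1}t + k$ with $k \in \set{0,\ldots,r^{m+1}-1}$ and write it via the Euclidean division $k = rk'' + k_0$ with $k_0 \in \set{0,\ldots,r-1}$ and $k'' \in \set{0,\ldots,r^m-1}$. This yields
\[
r^{m+1}t + k = r(r^m t + k'') + k_0,
\]
so that $b_{r^{m+1}t+k} = b_{r\,t' + k_0}$ with $t' = r^m t + k''$. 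Applying the one-step shift relation $s^{(i+1)}(b_{r t' + k_0}) = s^{(i)}(b_{t'})$ established above with $i = m$ gives $s^{(m+1)}(b_{r^{m+1}t+k}) = s^{(m)}(b_{t'}) = s^{(m)}(b_{r^m t + k''})$. Now the induction hypothesis applies to the right-hand side (since $k'' \in \set{0,\ldots,r^m-1}$) and yields $s^{(m)}(b_{r^m t + k''}) = s^{(0)}(b_t)$, completing the step.

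The main technical point to handle carefully is the digit-shift relation $s^{(i+1)}(b_{rt+k}) = s^{(i)}(b_t)$, which is the engine of the whole argument; everything else is bookkeeping with the mixed-radix decomposition $k = rk'' + k_0$. This shift relation is exactly what uniqueness of the expansion guarantees when we read off \eqref{bseq}: prepending the single digit $s_0$ to the string $\expan{b_t}_{P/Q}$ produces a valid $P/Q$-polynomial digit expansion of $b_{rt+k}$, and by Theorem~\ref{integerRepresentation} this must be \emph{the} expansion, so the positions line up as claimed. The only mild subtlety is ensuring that $s_0 \in \mathcal{D}$ is genuinely a legitimate least significant digit, which holds because $s_0 \in L(b_t) \subseteq \mathcal{D}$ by the construction of the edge labels in \eqref{edgelabels}. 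Once this is in place the double induction closes immediately, and the indexing identity $r^{m+1}t + k = r(r^m t + k'') + k_0$ makes the reduction from level $m+1$ to level $m$ transparent.
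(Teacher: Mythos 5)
Your proof is correct and follows essentially the same route as the paper: both arguments rest on the relation \eqref{bseq}, the uniqueness of $P/Q$-polynomial digit expansions from Theorem~\ref{integerRepresentation}, and induction on $m$ (the paper iterates \eqref{bseq} to get $b_{r^mt+k}=(P/Q)^m b_t+\sum_{i=0}^{m-1}\frac{s_i}{Q}(P/Q)^i$ and reads off the $m$-th digit, while you iterate the equivalent digit-shift relation $s^{(i+1)}(b_{rt+k})=s^{(i)}(b_t)$ via the decomposition $k=rk''+k_0$).
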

		
		Hence, if the sequence $(s^{(0)}(b_n))_{n\geq 0}$ is known, 
		then each sequence $(s^{(m)}(b_n))_{n\geq 0}$ is generated by repeating each term of $(s^{(0)}(b_n))$ exactly $r^m$ times.		
		Clearly, the first $r$ terms of the sequence $(s^{(0)}(b_n))$ are the elements of 
		\[
		L(0)=\set{dQ\,:\,d\in\F_q[X], \deg{d}<\deg(P/Q)}
		\] 
		ordered by the lexicographic order on $\F_q[X]$.
		From these terms, the first $r$ terms of $(b_n)$ are obtained, namely, they are given by $s^{(0)}(b_0)/Q,\ldots,s^{(0)}(b_{r-1})/Q$.		
		The following proposition tells us how to obtain the rest of the terms of $(s^{(0)}(b_n))$ and of $(b_n)$.
		
		\begin{proposition}\label{bn}
			Let $(P/Q,\mathcal{D})$ be a rational function based digit system.  Then the following assertions hold:
			\begin{enumerate}
				\item If $t_i=\floor{n/r^i}$ for $i\in\set{0,\ldots,m}$, where $m$ is the exponent of the highest power of $r$ that does not exceed $n$, then
				\[
				b_n=\sum_{i=0}^m{\frac{s^{(0)}(b_{t_i})}{Q}{\lt(\frac{P}{Q}\rt)}^i}.\]
				
				\item If $n=rt+k$ where $t\in\N$ and $k\in\set{0,\ldots,r-1}$, then $s^{(0)}(b_n)$ is the $(k+1)$-st element of $L(b_t)$ 
				arranged according to the lexicographic order on $\F_q[X]$.
			\end{enumerate}
		\end{proposition}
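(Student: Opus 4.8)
The plan is to prove the two parts separately: part (i) will follow by reading off the unique digit expansion of $b_n$ and rewriting each of its digits via Lemma~\ref{otherdigits}, while part (ii) will follow from the recursion~\eqref{bseq} together with the definition of the lexicographic order on $\mathcal{L}_{P/Q}$. For (i), I first note that $r^m\le n<r^{m+1}$ forces $b_n$ to be a node of level $m+1$, so by the level-length correspondence established before Lemma~\ref{otherdigits} its $P/Q$-polynomial digit expansion has exactly $m+1$ digits; by the uniqueness in Theorem~\ref{integerRepresentation} it reads
\[
b_n=\sum_{i=0}^m\frac{s^{(i)}(b_n)}{Q}{\lt(\frac{P}{Q}\rt)}^i.
\]
It then remains to identify $s^{(i)}(b_n)$ with $s^{(0)}(b_{t_i})$ for each $i$. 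To do this I would, for fixed $i\in\set{0,\ldots,m}$, write the Euclidean division $n=r^it_i+k_i$ with $t_i=\floor{n/r^i}$ and remainder $k_i\in\set{0,\ldots,r^i-1}$, and apply Lemma~\ref{otherdigits} (with its index $m$ taken to be $i$, $t$ taken to be $t_i$, and $k$ taken to be $k_i$) to obtain $s^{(i)}(b_n)=s^{(i)}(b_{r^it_i+k_i})=s^{(0)}(b_{t_i})$. Substituting these $m+1$ identities into the displayed expansion gives the claimed formula.

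For (ii), I start from~\eqref{bseq}, which yields $b_n=b_{rt+k}=\frac{P}{Q}b_t+\frac{s_0}{Q}$ with $s_0\in L(b_t)$; since appending an edge to the path $p(b_t)$ places its label in the least significant position of the resulting expansion, we have $s_0=s^{(0)}(b_n)$. Because the target $w$ of an edge is uniquely determined by its source and its label (Theorem~\ref{ktree}), the assignment $k\mapsto s^{(0)}(b_{rt+k})$ is a bijection of $\set{0,\ldots,r-1}$ onto $L(b_t)$, so only the order has to be pinned down. The children $b_{rt},\ldots,b_{rt+r-1}$ all share the path $p(b_t)$ and differ solely in their final digit; hence when two of them are compared under the lexicographic order on $\mathcal{L}_{P/Q}$, the highest index at which they differ is $0$, and the comparison reduces to comparing their least significant digits. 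Since $(b_n)$ is arranged by increasing lexicographic order of expansions, increasing $k$ corresponds to increasing $\sqsubseteq$-order of $s^{(0)}(b_{rt+k})$, which shows that $s^{(0)}(b_n)$ is exactly the $(k+1)$-st element of $L(b_t)$ in the lexicographic order on $\F_q[X]$.

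The arithmetic of the Euclidean divisions and the indexing bookkeeping in (i) are routine; the delicate point is the ordering argument in (ii). The hard part will be to guarantee that the $r$ children of $b_t$ really occupy the consecutive block $b_{rt},\ldots,b_{rt+r-1}$ in the global enumeration and that no node with a different parent can fall lexicographically between two siblings. I would settle this by observing that any such intruding node would already differ from both siblings at some index larger than $0$ (somewhere along the parent path), and hence must lie on the same side of both of them; together with the level-length correspondence and~\eqref{bseq} this confirms the consecutive indexing and legitimizes the reduction to comparing final digits.
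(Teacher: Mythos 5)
Your proof is correct and takes essentially the same approach as the paper, whose entire argument is the one-line citation that (i) follows from \eqref{e4} and Lemma~\ref{otherdigits} while (ii) follows from \eqref{bseq}; you have simply filled in the bookkeeping (the divisions $n=r^it_i+k_i$ feeding Lemma~\ref{otherdigits}, and the observation that siblings share their parent path and are therefore compared by their final digits under $\sqsubseteq$) that the paper leaves implicit.
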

		\begin{proof}
			Statement (i) follows from~\eqref{e4} and Lemma~\ref{otherdigits}, while (ii) follows from~\eqref{bseq}.
		\end{proof}
		
		Alternatively, we can obtain the terms of the sequence $(s^{(0)}(b_n))$ by rearranging the terms of the sequence $(s^{(0)}(a_n))$.
		Indeed, by comparing degrees one readily derives that the terms $s^{(0)}(b_{r^{\ell}}),\ldots,s^{(0)}(b_{r^{\ell+1}-1})$ of the sequence $(s^{(0)}(b_n))$ 
		are the terms	$s^{(0)}(a_{r^{\ell}}),\ldots,s^{(0)}(a_{r^{\ell+1}-1})$ of the sequence $(s^{(0)}(a_n))$ in some order.
		
		Recall from Remark~\ref{cobhams} that the sequence $(s^{(0)}(a_n))$ is $q$-automatic.
		The following remark tells us that this property is lost when the terms of $(s^{(0)}(a_n))$ are rearranged to yield the sequence $(s^{(0)}(b_n))$.

		\begin{remark}
			Consider the subsequence $(s^{(0)}(b_{r^n}))_{n\geq 0}$ of the sequence $(s^{(0)}(b_n))$.
			It follows from Proposition~\ref{bn} that 
			\[
			b_{r^{n-1}}=\sum_{i=0}^{n-1}{\frac{s^{(0)}(b_{r^{n-1-i}})}{Q}{\lt(\frac{P}{Q}\rt)}^i}
			\]
			and $s^{(0)}(b_{r^n})\in L(b_{r^{n-1}})$ for all $n\in\N$.
			Hence, the terms of $(s^{(0)}(b_{r^n}))$ are the labels of a nontrivial infinite path starting from the root of the expansion graph $T(P/Q)$.
			It follows then from Theorem~\ref{eventperiod} that $(s^{(0)}(b_{r^n}))$ is not eventually periodic.
			This implies that the sequence $(s^{(0)}(b_n))$ is not $r$-automatic (\emph{cf.}~\cite[Corollary 5.5.3]{AS}).
			Finally, since $r$ is a power of $q$, we conclude that $(s^{(0)}(b_n))$ is not $q$-automatic (\emph{cf.}~\cite[Theorem 6.6.4]{AS}).	
			
			In addition, it is easy to show from above that the sequence $(s^{(m)}(b_n))_{n\geq 0}$ is not $q$-automatic for each $m\ge 1$.				
		\end{remark}
		
	\section{$P/Q$-digit expansions of formal Laurent series}\label{sec:5}
		We have seen in Theorem~\ref{integerRepresentation} that every polynomial over $\F_q$ has a unique finite $P/Q$-polynomial digit expansion.
		In this section, we define $P/Q$-digit expansions for formal Laurent series over $\F_q$. 
		It turns out that these expansions correspond to infinite paths on the expansion graph $T(P/Q)$ defined in Section~\ref{sec:4}. 		
		Let 
		\[\alpha=\sum_{i=-\infty}^{h}{\alpha_iX^i}\;,\; \alpha_h\neq 0,\] 
		be a formal Laurent series over $\F_q$.  
		Then the \emph{degree of $\alpha$} is $\deg{\alpha}=h$, and the (non-Archimedean) \emph{absolute value of $\alpha$} is 
		\begin{equation}\label{absolutevalue}
			|\alpha|=
			\begin{cases}
				q^h, &\text{if }\alpha\neq 0,\\
				0, &\text{if }\alpha=0.
			\end{cases}
		\end{equation}
		The ring $\F_q((X^{-1}))$ of formal Laurent series over $\F_q$ forms a topological ring under the metric induced by the absolute value.		
		Given $r>0$, the disk in $\F_q((X^{-1}))$ centered at $\alpha$ with radius $r$ is     
		\[D(\alpha,r):=\set{\xi\in\F_q((X^{-1})):|\alpha-\xi|<r}.\]
		The \emph{integer part of $\alpha$} and the \emph{fractional part of $\alpha$} are given by 
		\[\floor{\alpha}=\sum_{i=0}^h{\alpha_iX^i}\quad\text{and}\quad\set{\alpha}=\sum_{i=-\infty}^{-1}{\alpha_iX^i},\]
		 respectively. Clearly, $\floor{\alpha}\in\F_q[X]$, $\set{\alpha}\in D(0,1)$, and $\alpha=\floor{\alpha}+\set{\alpha}$.
		
		\begin{remark}
			Given $w\in\F_q[X]$ with $\expan{w}_{P/Q}=(s_k\cdots s_0)_{P/Q}$, 
			we can reformulate~\eqref{e3} as $s_i=P\set{(Q/P)w_i}$ and $w_{i+1}=\floor{(Q/P)w_i}$.
			Hence, if we define the transformation $T$ on $\F_q[X]$ by \[T(w)=\floor{\frac{Q}{P}w},\]
			we have \[s_i=QT^i(w)\imod{P}=P\set{\frac{Q}{P}T^i(w)},\]
			for $0\leq i\leq k$.
		\end{remark}
	
	  We now consider one-sided right-infinite strings over $\mathcal{D}$ that are preceded by a radix point, and we write 
		$\mathcal{D}^{-\omega}=\set{.s_{-1}s_{-2}\cdots \,:\,s_i\in \mathcal{D}}$.
		Using this notation we can extend the evaluation map defined in \eqref{evalmap} as follows:
	  \begin{equation*}
	  \pi:\mathcal{D}^{\ast}\times\mathcal{D}^{-\omega} \rightarrow\F_q((X^{-1}))\;,\;
			s_k\cdots s_0.s_{-1}s_{-2}\cdots\;\mapsto\sum_{i=-\infty}^k\frac{s_i}{Q}{\lt(\frac{P}{Q}\rt)}^i.
		\end{equation*}
		Given $t\in\N$, we have
		${\lt({P}/{Q}\rt)}^t\pi(s_k\cdots s_0.s_{-1}s_{-2}\cdots)=\pi(s_k\cdots s_{-t}.s_{-t-1}\cdots)$, 
		hence,
		$\pi(s_k\cdots s_0.s_{-1}s_{-2}\cdots)=\lim_{t\rightarrow\infty}{\lt({P}/{Q}\rt)}^{-t}\pi(s_{k}\cdots s_{-t})$.
		In addition, $\pi$ is continuous if $\mathcal{D}^{\ast}\times\mathcal{D}^{-\omega}$ carries the usual metric, 
		{\it i.e.}, two digit strings are considered to be close when they disagree only on digits with large negative indices.
		
		It turns out that infinite strings in $W_{P/Q}$ are of particular importance.
		
		\begin{definition}[{$P/Q$-digit expansion in $\F_q{((X^{-1}))}$}]\label{def_LS}
			Let $(P/Q,\mathcal{D})$ be a rational function based digit system and $\alpha\in \F_q{((X^{-1}))}$. 
			If $\alpha=\pi(s_k\cdots s_0.s_{-1}s_{-2}\cdots)$ for $s=s_k\cdots s_0s_{-1}s_{-2}\cdots\in W_{P/Q}$, 
			then we say that $\pi(s_k\cdots s_0.s_{-1}s_{-2}\cdots)$
			is a \emph{$P/Q$-(series) digit expansion} of the formal Laurent series $\alpha$  with \emph{digit string} $s$.
		\end{definition}
		
		\begin{remark}
			The $P/Q$-digit expansions we have defined here are not the same as
			the \emph{$\beta$-expansions} of formal Laurent series over $\F_q$ that were 
			introduced independently in~\cite{HM06,S07}.  
			To be more precise, if $\beta=P/Q$, then the $\beta$-expansion of $\alpha\in D(0,1)$ is given by 
			$d_{\beta}(\alpha)=.d_{-1}d_{-2}\cdots$, where 
			\[\alpha=\sum_{i=-\infty}^{-1}{d_i{\lt(\frac{P}{Q}\rt)}^i}\]
			and the sequence $(d_i)_{-\infty<i\leq -1}$ is obtained by the greedy algorithm.						
			Indeed, the factor $1/Q$ is missing. 
			Furthermore, the digits $d_i$ are polynomials of degree less than $\deg(P/Q)=\deg{P}-\deg{Q}$, 
			whereas the digits in $P/Q$-digit expansions are allowed to have degree up to $\deg{P}-1$.
		\end{remark}		
		
		It turns out that restricting the digit strings to $W_{P/Q}$ in Definition~\ref{def_LS} leads to a unique $P/Q$-digit expansion for each formal Laurent series over $\F_q$.
		
		\begin{theorem}\label{wpq}
			Let $(P/Q,\mathcal{D})$ be a rational function based digit system.
			Then every $\alpha \in \F_q((X^{-1}))$ has a unique $P/Q$-digit expansion
			\begin{equation}\label{laurserexp}
				\alpha =\sum_{i=-\infty}^{k}\frac{s_i}{Q}{\lt(\frac{P}{Q}\rt)}^{i}.
			\end{equation}
		\end{theorem}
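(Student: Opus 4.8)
The plan is to prove existence and uniqueness separately, exploiting the correspondence between infinite paths in $T(P/Q)$ and the representation~\eqref{laurserexp}. The first thing I would record is the following dictionary: if $v_0=0,v_1,v_2,\dots$ is an infinite path in $T(P/Q)$ with edge labels $t_1,t_2,\dots\in\mathcal{D}$, then $v_{j+1}=(Pv_j+t_{j+1})/Q$, the truncations satisfy $\pi(t_1\cdots t_j)=v_j\in\F_q[X]$, and placing the radix point after the $(k+1)$-st label yields the value $\lim_{j\to\infty}(Q/P)^{j-1-k}v_j$. Thus a $P/Q$-digit expansion of $\alpha$ with top index $k$ is exactly an infinite path whose associated limit equals $\alpha$, so identifying and distinguishing such paths is the whole task. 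Throughout I use that $t_{j+1}\equiv-Pv_j\imod{Q}$ is forced in order to keep $v_{j+1}\in\F_q[X]$ (see~\eqref{minimal}), whereas by~\eqref{edgelabels} the ``quotient part'' $d$ of $t_{j+1}$, of degree $<\deg P-\deg Q$, is free; this freedom accounts for the $r$ outgoing edges of Theorem~\ref{ktree}.

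For existence I would build the path greedily from the root. First fix $k$ with $\deg\alpha<(k+1)(\deg P-\deg Q)$ and track the error $\epsilon_j=\alpha-(Q/P)^{j-1-k}v_j$, so $\epsilon_0=\alpha$ and
\[
\epsilon_{j+1}=\epsilon_j-(Q/P)^{j-k}\lt(d_{j+1}+\frac{m_{v_j}}{Q}\rt),
\]
where $m_{v_j}$ is the forced residue and $d_{j+1}$ is the free quotient to be chosen. I would maintain the invariant $\deg\epsilon_j<(k-j+1)(\deg P-\deg Q)$: then the part of $\epsilon_j$ of degree $\ge(k-j)(\deg P-\deg Q)$ is a Laurent polynomial supported in the window of degrees $[(k-j)(\deg P-\deg Q),(k-j+1)(\deg P-\deg Q))$ of width $\deg P-\deg Q$. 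Since $(Q/P)^{j-k}$ is a unit with nonzero leading coefficient, the assignment $d_{j+1}\mapsto(\text{window part of }(Q/P)^{j-k}d_{j+1})$ is an invertible triangular linear map on $\F_q^{\deg P-\deg Q}$. Hence I can choose $d_{j+1}$ (automatically of degree $<\deg P-\deg Q$, so a legal label by~\eqref{edgelabels}) cancelling that window; the remaining contributions of the correction lie strictly below it, giving $\deg\epsilon_{j+1}<(k-j)(\deg P-\deg Q)\to-\infty$. Thus $\epsilon_j\to0$, the labels form a string in $W_{P/Q}$, and $\pi$ of the resulting expansion equals $\alpha$. I expect this window-invertibility step to be the main obstacle, since it is where the precise degree bookkeeping and the constraint/freedom split of the labels must line up.

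For uniqueness I would compare two expansions of $\alpha$ lying in $W_{P/Q}$. After padding the shorter one with leading zero digits (the self-loop at the root, which changes neither membership in $W_{P/Q}$ nor the value), both have the same top index $k$ and correspond to paths $v_j$ and $v_j'$. Since $T(P/Q)$ is a tree whose only loop is the self-loop at $0$ (Theorem~\ref{ktree}), two distinct paths agree up to a first index $n$ and then split; writing $\delta_j=v_j-v_j'$ gives $\delta_j=0$ for $j<n$, a nonzero polynomial $\delta_n$ of degree $<\deg P-\deg Q$, and $\delta_{j+1}=(P\delta_j+(t_{j+1}-t_{j+1}'))/Q$. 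Because $\deg\delta_j\ge 0$ for $j\ge n$, the term $P\delta_j$ dominates and $\deg\delta_{j+1}=\deg\delta_j+(\deg P-\deg Q)$, so that $\deg\lt((Q/P)^{j-1-k}\delta_j\rt)$ is constant in $j$. But both limits equal $\alpha$, forcing $(Q/P)^{j-1-k}\delta_j\to 0$, a contradiction unless the paths never split. Hence the two expansions coincide (the remaining case $\alpha=0$ being the expansion $0^{\omega}$), which together with the construction above proves the theorem.
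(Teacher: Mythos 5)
Your proof is correct and follows essentially the same route as the paper: a greedy, window-by-window determination of the digits, where your invertible triangular map on $\F_q^{\deg P-\deg Q}$ is exactly the content of the paper's Lemma~\ref{edge} identifying each label in $L(v)$ with the coefficients of $X^{\deg Q},\dots,X^{\deg P-1}$, and your degree invariant for $\epsilon_j$ matches the paper's bookkeeping for the remainders $\alpha^{(j)}$ via Lemma~\ref{degreeint}. The only cosmetic difference is that you establish uniqueness by a standalone two-path divergence argument instead of observing that each greedy step is forced, but this rests on the same degree computation.
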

		
		Let $(P/Q,\mathcal{D})$ be a rational function based digit system, and set $m\vcentcolon=\deg{P}$, $n\vcentcolon=\deg{Q}$, and $r\vcentcolon=\deg(P/Q)=m-n$. 
		To establish Theorem~\ref{wpq}, we need the following lemmas.  
					
		\begin{lemma}\label{degreeint}
			Let $s\in\mathcal{D}$ and $i\in\Z$.
			\begin{enumerate}
				\item If $n\leq \deg{s}<m$, then $-ri\leq\deg{(s/Q){(P/Q)}^{-i}}<-r(i-1)$.  
				
				\item If $0\leq \deg{s}<n$, then $-ri-n\leq\deg{(s/Q){(P/Q)}^{-i}}<-ri$.
				
				\item If $s=0$, then $\deg{(s/Q){(P/Q)}^{-i}}=-\infty$.
			\end{enumerate}
		\end{lemma}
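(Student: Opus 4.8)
The plan is to reduce all three assertions to a single additivity computation for the degree. Recall that the absolute value $|\cdot|$ introduced in~\eqref{absolutevalue} is multiplicative on $\F_q((X^{-1}))$, so that the degree map is additive, i.e.\ $\deg(\alpha\beta)=\deg\alpha+\deg\beta$ for all nonzero $\alpha,\beta\in\F_q((X^{-1}))$, and that for a nonzero polynomial $a\in\F_q[X]$ its degree as a Laurent series coincides with its usual degree. In particular $\deg(s/Q)=\deg s-\deg Q=\deg s-n$ whenever $s\neq 0$, and $\deg\bigl((P/Q)^{-i}\bigr)=-i(\deg P-\deg Q)=-i(m-n)=-ri$.

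First I would dispose of assertion (iii): if $s=0$, then $(s/Q)(P/Q)^{-i}=0$, whose degree is $-\infty$ by the convention fixed in~\eqref{absolutevalue}.

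For the remaining two cases assume $s\neq 0$ and combine the two degree computations above via additivity to obtain the key formula
\[
\deg\Bigl(\frac{s}{Q}\Bigl(\frac{P}{Q}\Bigr)^{-i}\Bigr)=\deg s-n-ri.
\]
Now I would simply substitute the hypothesis on $\deg s$ in each case. In case (i) the bound $n\le\deg s<m$ is equivalent to $0\le\deg s-n<m-n=r$, so that $-ri\le\deg s-n-ri<r-ri=-r(i-1)$, which is exactly the claimed interval. In case (ii) the bound $0\le\deg s<n$ yields $-n\le\deg s-n<0$, whence $-ri-n\le\deg s-n-ri<-ri$, again as asserted.

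Since everything follows from the single displayed formula, there is no genuine obstacle here: for a fixed $i$ the summand $-ri$ is merely a constant shift, so the three assertions become immediate translations of the hypotheses on $\deg s$. The only point that requires a little care is keeping track of the half-open nature of the intervals, so that the weak and strict inequalities land on the correct endpoints. It is worth stressing that the multiplicativity of $|\cdot|$---equivalently the additivity of $\deg$ under multiplication---is precisely what makes the argument uniform in $i\in\Z$ (in particular for negative $i$), reducing it to a one-line substitution rather than a case analysis on the signs of the exponents.
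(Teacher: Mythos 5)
Your proof is correct and matches the paper's intent exactly: the paper dismisses this lemma as ``Trivial,'' and your argument simply writes out the routine degree computation $\deg\bigl((s/Q)(P/Q)^{-i}\bigr)=\deg s-n-ri$ together with the case-by-case substitution of the bounds on $\deg s$. Nothing is missing and nothing differs in substance from what the authors leave implicit.
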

		\begin{proof}
				Trivial.
		\end{proof}
				
		In the following lemma we use the set $L(v)$ of labels of the outgoing edges from $v\in\F_q[X]$ in $T(P/Q)$.
				  
		\begin{lemma}\label{edge}
			Choose $v\in\F_q[X]$ and $\beta\in\F_q((X^{-1}))$ with $\deg{\beta}<m$.  Then there exists a unique $s\in L(v)$ such that $\deg(\beta-s)<n$.
		\end{lemma}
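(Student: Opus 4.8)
The plan is to use the explicit description of $L(v)$ from~\eqref{edgelabels}, which gives $L(v)=\set{m_v+dQ\,:\,d\in\F_q[X],\ \deg d<r}$, where $m_v$ is the unique label in $L(v)$ of degree less than $n$. Every $s\in L(v)$ then satisfies $\deg s<m$, since $m_v$ has degree below $n<m$ while $dQ$ has degree $\deg d+n<r+n=m$. The whole argument is a matter of degree counting, and the hypothesis $\deg\beta<m$ is exactly what makes it work.

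For uniqueness, I would suppose that $s,s'\in L(v)$ both satisfy $\deg(\beta-s)<n$ and $\deg(\beta-s')<n$, so that $\deg(s-s')<n$ by the ultrametric inequality. Writing $s=m_v+dQ$ and $s'=m_v+d'Q$ with $\deg d,\deg d'<r$, one gets $s-s'=(d-d')Q$, whence $\deg(s-s')=\deg(d-d')+n$ whenever $d\ne d'$. Since $\deg(d-d')\ge 0$ in that case, this would force $\deg(s-s')\ge n$, a contradiction; hence $d=d'$ and $s=s'$.

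For existence, I would exhibit the required label explicitly. Set $d\vcentcolon=\floor{(\beta-m_v)/Q}$ and $s\vcentcolon=m_v+dQ$. Because $\deg m_v<n<m$ and $\deg\beta<m$, the difference $\beta-m_v$ has degree $<m$, so $\deg\big((\beta-m_v)/Q\big)<m-n=r$ and therefore $\deg d<r$. With this $d$ we have $s\in L(v)$, and moreover
\[\beta-s=(\beta-m_v)-dQ=\set{\frac{\beta-m_v}{Q}}\,Q,\]
whose degree is $<n$, since the fractional part $\set{(\beta-m_v)/Q}$ has negative degree while $\deg Q=n$. This produces a label of $L(v)$ with the desired property.

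The argument presents no genuine obstacle; the only point requiring care is the verification that $\deg d<r$, which is precisely where the bound $\deg\beta<m$ is used, and which ensures that the candidate $s$ actually lands inside the finite label set $L(v)$ rather than merely in $\mathcal{D}$.
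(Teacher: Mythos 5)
Your proof is correct. It rests on the same key fact as the paper's proof, namely the description of $L(v)$ in~\eqref{edgelabels}, and your uniqueness argument is essentially the paper's: two distinct labels differ by a nonzero multiple of $Q$ and hence by something of degree at least $n$, which is incompatible with both differences $\beta-s$, $\beta-s'$ having degree below $n$. Where you diverge is in the existence part. The paper argues by counting: the map sending a label $s=s_0+\cdots+s_{m-1}X^{m-1}\in L(v)$ to its top part $s_nX^n+\cdots+s_{m-1}X^{m-1}$ is injective, and since $|L(v)|=q^{m-n}$ equals the number of possible top parts (Theorem~\ref{ktree}), it is a bijection; the label matching the coefficients $\beta_n,\ldots,\beta_{m-1}$ of $\beta$ is then the required $s$. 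You instead exhibit the witness explicitly as $s=m_v+\floor{(\beta-m_v)/Q}Q$ and verify directly that $\deg d<r$ and $\deg(\beta-s)<n$. The constructive version is slightly longer but has the advantage of making the digit-selection step in the proof of Theorem~\ref{wpq} algorithmically explicit; the paper's counting argument is shorter and makes the ``exactly one digit per top part'' structure of $L(v)$ more visible. Both are complete and correct.
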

		\begin{proof}
			Let $t=t_0+t_1X+\cdots+t_{m-1}X^{m-1}$ and $u=u_0+u_1X+\cdots+u_{m-1}X^{m-1}$ 
			be distinct digits in $L(v)$.
			It follows from ~\eqref{edgelabels} that $t-u=dQ$ for some nonzero $d\in\F_q[X]$.
			Thus, $\deg(t-u)\geq n$ which means that
			$t_nX^n+\cdots+t_{m-1}X^{m-1}$ is different from $u_nX^n+\cdots+u_{m-1}X^{m-1}$.
			Hence, each digit in $L(v)$ corresponds to a unique polynomial in $\F_q[X]$ of the form $a_{n}X^n+\cdots+a_{m-1}X^{m-1}$.
			However, the number of polynomials of this form is exactly $|L(v)|$ by Theorem~\ref{ktree}.
			Therefore, if $\beta=\sum_{i=-\infty}^{m-1}{\beta_iX^i}$, 
			then there is a unique $s=s_0+s_1X+\cdots+s_{m-1}X^{m-1}\in L(v)$ such that $s_i=\beta_i$ for $n\leq i<m$.
			This proves the claim.				
		\end{proof}
		
		We are now ready to prove the theorem.
		
		\begin{proof}[Proof of Theorem~\ref{wpq}]
			Let $\alpha$ be a formal Laurent series over $\F_q$. 
			For the moment we assume that $\lfloor \alpha \rfloor=0$ and write $\alpha=\sum_{i=-\infty}^{-1}{\alpha_iX^i}$.
			We will now determine the unique $P/Q$-digit expansion $s_k\cdots s_0.s_{-1}s_{-2}\cdots$ for $\alpha$. 
			
			Assume first that there exists a $P/Q$-digit representation $s_k\cdots s_0.s_{-1}s_{-2}\cdots$ for $\alpha$ with $s_k\neq 0$ 
			for some $k\ge 0$. Then $s_k$ has to be a nonzero digit in $L(0)$ and, hence, $\deg s_k \ge n$ by~\eqref{edgelabels}. 
			This implies that $\deg{(s_k/Q){(P/Q)}^k}\ge rk$ by Lemma~\ref{degreeint}~(i). 
			Since $\deg{(s_i/Q){(P/Q)}^i}<rk$ holds for $i<k$ by the same lemma, the leading term of the sum
			\begin{equation}\label{eq:toobig}
				\sum_{i=-\infty}^{k}\frac{s_i}{Q}{\lt(\frac{P}{Q}\rt)}^{i}
			\end{equation}
			cannot cancel with any of the other terms and, thus, the sum has degree at least $rk \ge 0$. 
			However, $\deg \alpha \le -1$ and so the sum in \eqref{eq:toobig} cannot be a $P/Q$-digit expansion of $\alpha$. 
			Thus, if there exists a $P/Q$-digit expansion for $\alpha$ it has to be of the form $0.s_{-1}s_{-2}\cdots$. 
			The remaining part of the proof is devoted to the description of an algorithm that determines the unique $P/Q$-digit expansion of $\alpha$ of this form.	 
			
			Set $\alpha^{(1)}=\alpha$ and $v_1=0$.
			We see from Lemma~\ref{degreeint} that the prefix $\alpha_{-1}X^{-1}+\cdots+\alpha_{-r}X^{-r}$ of $\alpha^{(1)}$ 
			is determined only by the term $(s_{-1}/Q){(P/Q)}^{-1}=s_1/P$ of~\eqref{laurserexp} since all the summands of~\eqref{laurserexp} with smaller index have degree less than $-r$. 
			We set $\beta_1=P(\alpha_{-1}X^{-1}+\cdots+\alpha_{-r}X^{-r})$.
			Since $\deg{\beta_1}<m$, we have to choose $s_{-1}$ as the unique element of $L(v_1)$ such that $\deg(\beta_1-s_{-1})<n$ by Lemma~\ref{edge} 
			as this is the unique digit in $T(P/Q)$ on an outgoing edge from $v_1=0$ that guarantees that $\alpha^{(1)}-(s_{-1}/Q){(P/Q)}^{-1}$ has degree less than $-r$. 
			In view of Lemma~\ref{degreeint} this is necessary to represent $\alpha^{(1)}-(s_{-1}/Q){(P/Q)}^{-1}$ by a sum of the form \eqref{eq:toobig} with $k=-2$ and we can go on.
			
			Thus, we have uniquely determined a first digit $s_{-1}\in L(v_1)$ such that 
			the first $r$ terms of $(s_{-1}/Q){(P/Q)}^{-1}$ are precisely $\alpha_{-1}X^{-1}+\cdots+\alpha_{-r}X^{-r}$. 
			However, the remaining terms of $(s_{-1}/Q){(P/Q)}^{-1}$ may be different from those of $\alpha^{(1)}$. 
			This means that we now have to consider
			\[\alpha^{(2)}=\alpha^{(1)}-\frac{s_{-1}}{Q}{\lt(\frac{P}{Q}\rt)}^{-1}=\sum_{i=-\infty}^{-r-1}{\alpha_i^{(2)}X^i}.\] 
			Note that the node adjacent to $v_1=0$ by the edge with label $s_{-1}$ in $T(P/Q)$ is $v_2=(P/Q)v_1+s_{-1}/Q=s_{-1}/Q$.  
			Similarly, the prefix $\alpha_{-r-1}^{(2)}X^{-r-1}+\cdots+\alpha_{-2r}^{(2)}X^{-2r}$ of $\alpha^{(2)}$ 
			is determined only by the term $(s_{-2}/Q){(P/Q)}^{-2}$ of~\eqref{laurserexp} since all the summands of~\eqref{laurserexp} with smaller index have degree less than $-2r$.
			So we look at 
			\[\beta_2=Q{\lt(\frac{P}{Q}\rt)}^2(\alpha_{-r-1}^{(2)}X^{-r-1}+\cdots+\alpha_{-2r}^{(2)}X^{-2r}).\]
			We see that $\deg{\beta_2}<m$ and, thus, Lemma~\ref{edge} guarantees that there is a unique element $s_{-2}\in L(v_2)$ such that $\deg(\beta_2-s_{-2})<n$. 
			This is the unique digit in $T(P/Q)$ on an outgoing edge from $v_2$ that ensures that
			\begin{equation}\label{a2}
				\alpha^{(2)}-\frac{s_{-2}}{Q}{\lt(\frac{P}{Q}\rt)}^{-2}=\alpha^{(1)}-\frac{s_{-1}}{Q}{\lt(\frac{P}{Q}\rt)}^{-1}-\frac{s_{-2}}{Q}{\lt(\frac{P}{Q}\rt)}^{-2}
			\end{equation}
			has degree less than $-2r$.  
			In view of Lemma~\ref{degreeint} this is necessary to represent~\eqref{a2} by a sum of the form~\eqref{eq:toobig} with $k=-3$ and we can continue.
			
			We continue recursively.  In particular, for $j\in\N$, we compute
			\begin{equation}
				\begin{aligned}
					\alpha^{(j)}&=\alpha^{(j-1)}-\frac{s_{-(j-1)}}{Q}{\lt(\frac{P}{Q}\right)}^{-(j-1)}=\sum_{i=-\infty}^{-(j-1)r-1}{\alpha_i^{(j)}X^{i}},\\
					v_j&=\frac{P}{Q}v_{j-1}+\frac{s_{-(j-1)}}{Q},\text{ and}\\
					\beta_j&=Q{\lt(\frac{P}{Q}\rt)}^j(\alpha_{-(j-1)r-1}^{(j)}X^{-(j-1)r-1}+\cdots+\alpha_{-jr}^{(j)}X^{-jr}).		
				\end{aligned}\label{rec2}
			\end{equation}
			Since $\deg{\beta_j}<m$, there is a unique $s_{-j}\in L(v_j)$ for which $\deg(\beta_j-s_{-j})<n$ by Lemma~\ref{edge}. 
			This is the unique digit in $T(P/Q)$ occurring on an edge leading out of $v_j$ that ensures that $\alpha^{(j)} - (s_{-j}/Q){(P/Q)}^{-j}$ has degree less than $-jr$. 
			In view of Lemma~\ref{degreeint} this is necessary to represent $\alpha^{(j)} - (s_{-j}/Q){(P/Q)}^{-j}$ by a sum of the form \eqref{eq:toobig} with $k=-(j+1)$ and we can go on.
						
			From this recursion, we generate an infinite string $s_{-1}s_{-2}\cdots$ that satisfies $s_{-1}s_{-2}\cdots\in W_{P/Q}$ and $\pi(0.s_{-1}s_{-2}\cdots)=\alpha$. 
			Moreover, during the generation of this string it was shown that we are forced to choose each digit in this representation in a unique way 
			that is imposed by the constraints of the language $W_{P/Q}$. 
			This yields the uniqueness of this representation.

			For general $\alpha\in\F_q((X^{-1}))$ it suffices to multiply $\alpha$ by a suitable negative power $(P/Q)^{-k}$ such that $\floor{(P/Q)^{-k}\alpha}$ has zero integer part.  
			Since $T(P/Q)$ has a loop at $0$, this multiplication just shifts the radix point of the $P/Q$-digit expansion of $\alpha$ and, hence, 
			the theorem is also proved for the general case.
		\end{proof}	
		
		\begin{remark}
			The $\omega$-language formed by the digit strings associated with $P/Q$-digit expansions of formal Laurent series over $\F_q$ is precisely $W_{P/Q}$.
			This $\omega$-language is not regular by Theorem~\ref{omegalang}.
		\end{remark}
		
		Let $(P/Q,\mathcal{D})$ be a rational function based digit system. 
		The unique (in view of Theorem~\ref{wpq}) $P/Q$-digit expansion of  $\alpha\in \F_q{((X^{-1}))}$ will be denoted by 
		$\sexpan{\alpha}_{P/Q}=(s_k\cdots s_0.s_{-1}s_{-2}\cdots)_{P/Q}$. 
		For the sake of simplicity of notation, we will sometimes write $s_k\cdots s_0.s_{-1}s_{-2}\cdots$.  

		\pagebreak\begin{remark}\mbox{}
		\begin{enumerate}
				\item Recall that every polynomial $w$ in $\F_q[X]$ has a unique $P/Q$-polynomial digit expansion by Theorem~\ref{integerRepresentation}. 
				On the other hand, $w$ viewed as a formal Laurent series also has a unique $P/Q$-series digit expansion by Theorem~\ref{wpq}.  
				Clearly, these two digit expansions are different because the former is finite while the latter is infinite (see Example~\ref{fls}).
				
				\item As illustrated in Example~\ref{fls}, $\sexpan{\alpha}_{P/Q}= s_k\cdots s_0.s_{-1}s_{-2}\cdots$ implies that $\expan{\floor{\alpha}}_{P/Q}=s_k\cdots s_0$. 
				However, in general $\sexpan{\set{\alpha}}_{P/Q}=.s_{-1}s_{-2}\cdots$ does not hold even if $\pi(.s_{-1}s_{-2}\cdots)=\set{\alpha}$ remains true. 
			\end{enumerate}
		\end{remark}	
		
		\begin{example}\label{fls}
			Choose $P=X^2+1$ and $Q=X$ and consider $\alpha\in\F_2((X^{-1}))$ with
			\[\alpha= X + 1 + \sum_{\underset{\sst i\not\equiv 0\imod{3}}{i<0}}{X^i}.\]
			Using the algorithm established in the proof of Theorem~\ref{wpq} we gain 
			\[\sexpan{\alpha}_{P/Q}=X,X+1.X+1,0,X+1,X,X+1,X+1,1,0,1,1,\cdots,\]
			where digits are separated by commas for the sake of readability.
			Let us now look at the $P/Q$-polynomial digit expansion and the $P/Q$-series digit expansion of $\floor{\alpha}$.
			Using~\eqref{e3} for the former and the above algorithm for the latter yield
			\begin{align*}
				\expan{\floor{\alpha}}_{P/Q}&=X,X+1,\quad\text{ and}\\
				\sexpan{\floor{\alpha}}_{P/Q}&=X,X+1.1,X+1,X,X+1,0,0,X,X+1,0,0,\cdots.
			\end{align*}	
			 In addition, the $P/Q$-representation of $\set{\alpha}$ is given by
			\[\sexpan{\set{\alpha}}_{P/Q}=.X,X+1,1,1,X+1,X+1,X+1,X+1,1,1,\cdots.\]
		\end{example}				
		
		\begin{remark}\label{rem:powerseries}
			At a first glance it might seem to be more natural to extend $P/Q$-expansions to the ring of formal power series $\F_q[[X]]$ rather than to the field $\F_q((X^{-1}))$. 
			However, due to the fact that the language $\mathcal{L}_{P/Q}$ is not suffix closed, 
			the analogue of the graph $T(P/Q)$ cannot be defined in a natural way that leads to unique expansions. 
			The only reasonable definition would be to go on with the full shift which allows infinitely many different representations for each element.
		\end{remark}
		
	\section{Explicit formulas for $P/Q$-digit expansions}\label{sec:6a}
		In this section we will provide formulas for the $P/Q$-polynomial digit expansions of polynomials and the $P/Q$-digit expansions of formal Laurent series. 
		Formulas of a similar kind have been worked out for $\beta$-expansions of formal Laurent series over finite fields in~\cite{SS}. 
		Even if we use some of the methods developed there, the situation here is more intricate as the expansion graph $T(P/Q)$ has a very difficult structure. 
		Indeed, new ideas are required in the proofs of Theorems~\ref{digitpoly} and~\ref{digitLaurent} below.
			
		We will now represent digit expansions in some larger domain.  
		Indeed, for the indeterminate $b$ ($b$ stands for basis) we consider the rings $\F_q(X)[b]$ and $\F_q((X^{-1},b^{-1}))$ in two variables. 
		First, let $w\in\F_q[X]$ with $\expan{w}_{P/Q}=s_k\cdots s_0$ be given. 
		In view of Theorem~\ref{integerRepresentation}, $w$ has a unique $P/Q$-polynomial digit expansion given by~\eqref{e4}.
		This allows us to define a function $\pdb:\F_q[X]\rightarrow\F_q(X)[b]$ by
		\begin{equation}\label{eq:elementpoly}
			\pdb(w)=\sum_{i=0}^k\frac{s_i}{Q}\,b^i.
		\end{equation}	
		Similarly, if $\alpha\in\F_q((X^{-1}))$ with $\sexpan{\alpha}_{P/Q}=s_k\cdots s_0.s_{-1}s_{-2}\cdots$ 
		then it follows from Theorem~\ref{wpq} that $\alpha$ can be represented uniquely by its $P/Q$-digit expansion
		\begin{equation}\label{eq:exp}
			\alpha=\sum_{i=-\infty}^k\frac{s_i}{Q}{\lt(\frac{P}{Q}\rt)}^i.
		\end{equation}
		This again allows us to define a function $\db:\F_q((X^{-1}))\rightarrow\F_q((X^{-1},b^{-1}))$ by
		\begin{equation}\label{eq:element}
			\db(\alpha)=\sum_{i=-\infty}^k\frac{s_i}{Q}\,b^i.
		\end{equation}
		Note that~\eqref{e4} and ~\eqref{eq:exp} are just expansions of $w$ and $\alpha$, respectively. 
		On the other hand, \eqref{eq:elementpoly} and~\eqref{eq:element} are elements of the rings $\F_q(X)[b]$ and $\F_q((X^{-1},b^{-1}))$, respectively. 
		Nonetheless, knowing the element $\pdb(w)$ is the same as knowing the $P/Q$-polynomial digit expansion of $w$. 
		At the same time, knowing the element $\db(\alpha)$ is the same as knowing the $P/Q$-digit expansion of $\alpha$. 
		Thus finding the $P/Q$-polynomial digit expansion of $w$ and the $P/Q$-digit expansion of $\alpha$ is equivalent 
		to finding formulas for $\pdb(w)$ and $\db(\alpha)$, respectively. 
		Without risk of confusion we also call $\pdb(w)$ the $P/Q$-polynomial digit expansion of $w$, and $\db(\alpha)$ the $P/Q$-digit expansion of $\alpha$.
			
		We will also need a kind of inverse for $\pdb$ and for $\db$, namely, 
		we would like to ``evaluate'' elements of $\F_q(X)[b]$ and $\F_q((X^{-1},b^{-1}))$ by mapping $b$ to $P/Q$. 
		More precisely, we define the map
		\[\operatorname{eval_{pd}}:\F_q(X)[b]\rightarrow\F_q(X)\] 
		where $b$ is mapped to $P/Q$ for a given element of $\F_q(X)[b]$.
		In the same manner, the map
		\[\operatorname{eval_d}:\F_q((X^{-1},b^{-1}))\rightarrow\F_q((X^{-1}))\] 
		is defined by mapping $b$ to $P/Q$ in a formal Laurent series of $\F_q((X^{-1},b^{-1}))$. 
				
		The following lemma is straightforward, and the succeeding lemma follows from Theorem~\ref{integerRepresentation} and Theorem~\ref{wpq}.
			
		\begin{lemma}\label{lem:equiv}	
			Let $f=Qb-P$.
			\begin{enumerate}
				\item If $y,y'\in\F_q(X)[b]$ with $y-y'\in f\,\F_q(X)[b]$, then 
				\[\operatorname{eval_{pd}}(y)=\operatorname{eval_{pd}}(y').\]
				
				\item If $\psi,\psi'\in\F_q((X^{-1},b^{-1}))$ with $\psi-\psi'\in f\,\F_q((X^{-1},b^{-1}))$, then 
				\[\operatorname{eval_d}(\psi)=\operatorname{eval_d}(\psi').\]
			\end{enumerate}
		\end{lemma}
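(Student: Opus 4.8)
The plan is to recognize that both $\operatorname{eval_{pd}}$ and $\operatorname{eval_d}$ are the substitution homomorphisms $b\mapsto P/Q$, so that the whole statement reduces to the single observation that $f=Qb-P$ lies in the kernel of each map. Once $f$ is in the kernel, the kernel being an ideal forces the entire principal ideal $f\cdot(\text{ambient ring})$ into the kernel, and applying this to $y-y'$ (respectively $\psi-\psi'$) together with additivity yields equality of the evaluations. I expect the proof to be short; the text itself labels it ``straightforward.''

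First I would treat (i). The map $\operatorname{eval_{pd}}\colon\F_q(X)[b]\to\F_q(X)$ is the evaluation homomorphism of the polynomial ring $\F_q(X)[b]$ at the element $P/Q\in\F_q(X)$, hence a ring homomorphism by the universal property of polynomial rings. Substituting $b=P/Q$ into $f$ gives $\operatorname{eval_{pd}}(f)=Q\cdot(P/Q)-P=0$, so $f\in\ker\operatorname{eval_{pd}}$. Since the kernel of a ring homomorphism is an ideal, $f\,\F_q(X)[b]\subseteq\ker\operatorname{eval_{pd}}$. Thus if $y-y'\in f\,\F_q(X)[b]$, then $\operatorname{eval_{pd}}(y)-\operatorname{eval_{pd}}(y')=\operatorname{eval_{pd}}(y-y')=0$, as desired.

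For (ii) the argument is formally identical, provided I first confirm that $\operatorname{eval_d}$ is a well-defined, continuous ring homomorphism. Here the substitution $b\mapsto P/Q$ replaces $b^{-1}$ by $Q/P$, and an element $\psi=\sum_{i=-\infty}^{k}c_i b^i$ with $c_i\in\F_q((X^{-1}))$ is sent to $\sum_{i=-\infty}^{k}c_i(P/Q)^i$. Because $\deg P>\deg Q$, we have $|Q/P|=q^{-(\deg P-\deg Q)}<1$, so the degrees of the substituted terms tend to $-\infty$ as $i\to-\infty$ and the series converges in the complete field $\F_q((X^{-1}))$; this makes $\operatorname{eval_d}$ a continuous ring homomorphism. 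As before $\operatorname{eval_d}(f)=Q\cdot(P/Q)-P=0$, so $f\in\ker\operatorname{eval_d}$, whence $f\,\F_q((X^{-1},b^{-1}))\subseteq\ker\operatorname{eval_d}$, and applying this to $\psi-\psi'$ finishes the proof.

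The only point demanding genuine care is exactly this convergence of the substituted infinite series, which is where the standing hypothesis $\deg P>\deg Q$ (equivalently $|P/Q|>1$) enters and which is the reason (ii) is not literally an instance of the polynomial evaluation in (i). Everything else is routine bookkeeping with ring homomorphisms and ideals.
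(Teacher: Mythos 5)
Your proof is correct and is exactly the argument the paper has in mind: the paper omits the proof entirely, calling the lemma ``straightforward,'' and the intended justification is precisely that $\operatorname{eval_{pd}}$ and $\operatorname{eval_d}$ are the substitution homomorphisms $b\mapsto P/Q$ annihilating $f=Qb-P$, so their kernels contain the respective principal ideals generated by $f$. Your extra care about convergence of the substituted series in part (ii) (using $\deg P>\deg Q$ so that the $X$-degrees of the substituted terms tend to $-\infty$) is the one point genuinely worth writing down, and you handle it correctly.
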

			
		\begin{lemma}\label{lem:72}
			Let $(P/Q,\mathcal{D})$ be a rational function based digit system. 
			\begin{enumerate}
				\item If $w\in\F_q[X]$, then 
				\[y=\sum_{i=0}^k{\frac{s_i}{Q}\,b^i}\in\F_q(X)[b]\] 
				satisfies $y=\pdb(w)$ if and only if $\operatorname{eval_{pd}}(y)=w$ and $s_i\in\mathcal{D}$ for $0\leq i\leq k$.
				
				\item If $\alpha\in\F_q((X^{-1}))$, then 
				\[\psi=\sum_{i=-\infty}^k\frac{s_i}{Q}\,b^i\in\F_q((X^{-1},b^{-1}))\] 
				satisfies $\psi=\db(\alpha)$ if and only if $\operatorname{eval_d}(\psi)=\alpha$ and the infinite string $s_k\cdots s_0s_{-1}s_{-2}\cdots\in W_{P/Q}$.
			\end{enumerate}
		\end{lemma}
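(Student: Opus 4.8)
The plan is to read both statements as a dictionary translating the uniqueness of a digit expansion into the language of the encoding maps $\pdb$ and $\db$. In each part the assertion has the shape ``$y$ (respectively $\psi$) is the encoding of $w$ (respectively $\alpha$) if and only if it evaluates back to $w$ (respectively $\alpha$) and its coefficient string consists of legal digits.'' The forward implication is immediate from the defining formulas~\eqref{eq:elementpoly} and~\eqref{eq:element} together with the observation that $\operatorname{eval_{pd}}$ and $\operatorname{eval_d}$ merely substitute $b\mapsto P/Q$, returning the expansions~\eqref{e4} and~\eqref{eq:exp}. Hence the only real content is the converse: one must show that the two listed conditions pin down a single element of $\F_q(X)[b]$ (respectively $\F_q((X^{-1},b^{-1}))$), and that this element is the encoding. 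This is precisely where the uniqueness Theorems~\ref{integerRepresentation} and~\ref{wpq} enter.

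For part (i), I would first note that if $y=\pdb(w)$ then its coefficients are by definition the digits $s_i\in\mathcal{D}$ of the $P/Q$-polynomial digit expansion of $w$, and $\operatorname{eval_{pd}}(y)=w$ holds by~\eqref{e4}. Conversely, suppose $y=\sum_{i=0}^k (s_i/Q)\,b^i$ with all $s_i\in\mathcal{D}$ and $\operatorname{eval_{pd}}(y)=w$. Then $\sum_{i=0}^k (s_i/Q)(P/Q)^i=w$ is itself a $P/Q$-polynomial digit expansion of $w$, so by Theorem~\ref{integerRepresentation} its digits must coincide with those defining $\pdb(w)$, giving $y=\pdb(w)$. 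The one point needing care is that the two expansions could a priori differ in length; since $0\in\mathcal{D}$ and $\mathcal{D}$ is closed under subtraction, I would pad the shorter string with zeros and apply Lemma~\ref{th1} to the coefficientwise difference, which forces the digits to agree term by term (so the two encodings are equal in $\F_q(X)[b]$, trailing zero terms being harmless).

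Part (ii) follows the same pattern with $\F_q((X^{-1}))$ in place of $\F_q[X]$. If $\psi=\db(\alpha)$, its coefficients are the digits of the $P/Q$-digit expansion of $\alpha$, whose digit string lies in $W_{P/Q}$ by Definition~\ref{def_LS}, and $\operatorname{eval_d}(\psi)=\alpha$ by~\eqref{eq:exp}. Conversely, if $\operatorname{eval_d}(\psi)=\alpha$ and $s_k\cdots s_0 s_{-1}s_{-2}\cdots\in W_{P/Q}$, then $\alpha=\pi(s_k\cdots s_0.s_{-1}s_{-2}\cdots)$ with digit string in $W_{P/Q}$, which is exactly what it means for this to be a $P/Q$-digit expansion of $\alpha$ in the sense of Definition~\ref{def_LS}. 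The uniqueness asserted in Theorem~\ref{wpq} then yields $\psi=\db(\alpha)$.

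I do not expect a genuine obstacle: the lemma is essentially a reformulation of the two uniqueness theorems, and the work is entirely in matching the hypotheses ``evaluates to $w$ (resp.\ $\alpha$) with legal digits'' verbatim to the hypotheses of Theorem~\ref{integerRepresentation} (via Lemma~\ref{th1}) and of Theorem~\ref{wpq} (via Definition~\ref{def_LS}). The only mildly delicate step is the length-matching argument in part (i); everything else is a direct unwinding of the definitions of $\pdb$, $\db$, $\operatorname{eval_{pd}}$, and $\operatorname{eval_d}$.
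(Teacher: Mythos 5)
Your proposal is correct and takes exactly the route the paper intends: the paper gives no detailed proof, stating only that the lemma ``follows from Theorem~\ref{integerRepresentation} and Theorem~\ref{wpq},'' and your argument is a faithful elaboration of that, reducing each converse direction to the corresponding uniqueness statement (with the length-padding via Lemma~\ref{th1} being a reasonable way to handle trailing zeros in part (i)).
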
		
			
		Thus, to obtain $\pdb(w)$ for some $w\in\F_q[X]$ we could start with an arbitrary representative $w' \in \F_q(X)[b]$ 
		satisfying $\operatorname{eval_{pd}}(w)=\operatorname{eval_{pd}}(w')$ (which can be just the natural embedding of $w$ in $\F_q(X)[b]$). 
		Then $\pdb(w)$ can be computed by adding a suitable $\F_q(X)[b]$-multiple of $f$ to $w'$. 
		The same procedure applies in computing $\db(\alpha)$ for a given $\alpha\in\F_q((X^{-1}))$.
		That is, start with $\alpha'\in\F_q((X^{-1},b^{-1}))$ satisfying $\operatorname{eval_d}(\alpha)=\operatorname{eval_d}(\alpha')$ 
		and add a suitable $\F_q((X^{-1},b^{-1}))$-multiple of $f$ to $\alpha'$. 
		
		To establish the desired formulas for $\pdb(w)$ and $\db(\alpha)$ we have to find the suitable multiple of $f$ in both cases. 
		To this end we need to compute multiplicative inverses of some elements in $\F_q((X^{-1},b^{-1}))$. 
		For this we define the notion of an $\A$-Laurent series (see also~\cite{SS}). 
		Let $\mathbf{x},\mathbf{y}\in\Z^2$ be linearly independent.
		Set
		\begin{equation}\label{def:A}
			\A\vcentcolon=\set{\mathbf{a}\,:\,\mathbf{a}=s\mathbf{x}+t\mathbf{y},\;s,t\geq 0}\cap\Z^2.
		\end{equation}
		If $\m\in\Z^2$, then we define $\A+\m$ as the Minkowski sum 
		\[
		\A+\m\vcentcolon=\set{\mathbf{a}+\m\,:\,\mathbf{a}\in\A}.
		\]
		The commutative ring $\F_q[[\A]]$ of \emph{$\A$-power series} in the indeterminates $b$ and $X$ is defined by 
		\[
		\F_q[[\A]]\vcentcolon=\Bigg\{\sum_{(i,j)\in\A}{a_{ij}b^iX^j}\,:\,a_{ij}\in\F_q\Bigg\}.
		\]
		Moreover, the commutative ring of \emph{$\A$-Laurent series} in the indeterminates $b$ and $X$ is given by
		\[
		\F_q((\A))\vcentcolon=\bigcup_{\mathbf{m}\in \mathbb{Z}^2} \F_q[[\A+m]].
		\]
		Suppose $\alpha\in\F_q((\A))$ is written in the form 
		\begin{equation}\label{ALaurent}
			\alpha=\sum_{(i,j)\in \A+\m}{a_{ij}b^iX^j}.
		\end{equation}
		Then the \emph{support} of $\alpha$ is defined as
		\[\supp(\alpha)\vcentcolon=\set{(i,j)\in\Z^2\,:\,a_{ij}\neq 0}.\]
		If $\alpha\in\F_q((\A))$ and there exists $\mathbf{m}\in\supp(\alpha)$ such that $\supp(\alpha)\subset\mathcal{A}+\mathbf{m}$, 
		then it is immediate that $\mathbf{m}$ is unique with this property. 
		In such a case we say that $\alpha$ has an \emph{anchor} and write $\mathbf{m}=\anc_{\A}(\alpha)$.
	
		Let $\mathcal{A}$ be defined as in \eqref{def:A}. Following~\cite[Definition~4.4]{SS}, let $\langle\cdot,\cdot\rangle$ be the Euclidean inner product of $\R^2$
		and choose $\mathbf{v}\in\Z^2$ with $\langle\mathbf{x},\mathbf{v}\rangle>0$ and $\langle\mathbf{y},\mathbf{v}\rangle>0$.
		Then $\langle\mathbf{v},\mathbf{z}\rangle\geq 0$ for all $\mathbf{z}\in\A$.
		For $\alpha\in\F_q((\A))$, let
		\[\operatorname{ord}_\A(\alpha)\vcentcolon=\min_{\mathbf{z}\in\supp(\alpha)}\langle\mathbf{v},\mathbf{z}\rangle
		\quad\hbox{and}\quad |\alpha|\vcentcolon=q^{-\operatorname{ord}_{\A}(\alpha)}.\]
		As $\A$ is discrete, this is well defined and makes $\F_q((\A))$ a topological ring.  
		As mentioned in \cite{SS}, $\F_q((\A))$ is not complete as a topological space. 
		However, the following weaker result holds.

		\begin{lemma}[{{\it cf.}~\cite[Lemma~4.5]{SS}}]\label{limitexists}
			If $(\alpha_k)_{k\geq 0}$ is a sequence in $\F_q((\A))$ which is Cauchy and $\supp(\alpha_k)\subseteq \A+\mathbf{m}$ for a fixed $\mathbf{m}\in\Z^2$ and all $k\geq 1$,
			then $\lim_{k\rightarrow\infty}\alpha_k$ exists in $\F_q((\A))$.
		\end{lemma}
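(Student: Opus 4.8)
The plan is to build the limit \emph{coefficientwise} and to use decisively the fact that all the supports lie inside a single fixed translate $\A+\m$ of the cone. The structural observation that drives everything is the following: since $\mathbf{v}\in\Z^2$ is chosen with $\langle\mathbf{x},\mathbf{v}\rangle>0$ and $\langle\mathbf{y},\mathbf{v}\rangle>0$, the functional $\mathbf{z}\mapsto\langle\mathbf{v},\mathbf{z}\rangle$ is integer-valued on $\Z^2$, is bounded below on $\A+\m$ by $c\vcentcolon=\langle\mathbf{v},\m\rangle$, and is \emph{strictly} positive on $\A\setminus\set{0}$. Writing $\mathbf{a}=s\mathbf{x}+t\mathbf{y}$ with $s,t\ge 0$, the bound $\langle\mathbf{v},\mathbf{a}\rangle\le N-c$ forces $s\le (N-c)/\langle\mathbf{v},\mathbf{x}\rangle$ and $t\le (N-c)/\langle\mathbf{v},\mathbf{y}\rangle$, so for every integer $N$ the slab $\set{\mathbf{z}\in\A+\m\,:\,\langle\mathbf{v},\mathbf{z}\rangle\le N}$ is a bounded set of lattice points, hence finite. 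This finiteness is exactly what makes $\operatorname{ord}_{\A}$ a genuine minimum and what will later let a single index control the whole slab.

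First I would convert the Cauchy hypothesis into coefficient stabilization. As the absolute value is non-Archimedean, $|\alpha_k-\alpha_l|<q^{-N}$ is equivalent to $\operatorname{ord}_{\A}(\alpha_k-\alpha_l)>N$, which by definition says that $\alpha_k$ and $\alpha_l$ carry the same coefficient at every position $\mathbf{z}$ with $\langle\mathbf{v},\mathbf{z}\rangle\le N$. Fixing a position $\mathbf{z}\in\Z^2$ and taking $N=\langle\mathbf{v},\mathbf{z}\rangle$, the Cauchy property yields an index beyond which the $\mathbf{z}$-coefficient of $\alpha_k$ is constant; call this eventual value $a_{\mathbf{z}}\in\F_q$. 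For $\mathbf{z}\notin\A+\m$ all $\alpha_k$ already vanish at $\mathbf{z}$, so $a_{\mathbf{z}}=0$ there, consistently.

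Next I would assemble $\alpha\vcentcolon=\sum_{\mathbf{z}\in\Z^2}a_{\mathbf{z}}\,b^{z_1}X^{z_2}$, where $\mathbf{z}=(z_1,z_2)$, and check that it is a legitimate element of $\F_q((\A))$. By construction $\supp(\alpha)\subseteq\A+\m$, whence $\alpha\in\F_q[[\A+\m]]\subseteq\F_q((\A))$ directly from the definition of the $\A$-power series ring, with no further finiteness of the support required. It is precisely here that the hypothesis of a single fixed $\m$ is indispensable: without it the stabilized coefficients need not sit inside any one translate of $\A$, and this is exactly the mechanism by which completeness fails for $\F_q((\A))$ in general.

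Finally I would verify $\alpha_k\to\alpha$. Given $N\in\Z$, the slab below level $N$ is finite by the first paragraph, so applying the stabilization to each of its finitely many positions and taking the largest of the resulting indices produces a $K$ such that, for all $k\ge K$, the series $\alpha_k$ and $\alpha$ agree at every $\mathbf{z}$ with $\langle\mathbf{v},\mathbf{z}\rangle\le N$. Hence $\operatorname{ord}_{\A}(\alpha-\alpha_k)>N$, that is $|\alpha-\alpha_k|\le q^{-N-1}$, and letting $N\to\infty$ gives convergence in $\F_q((\A))$. I expect the only genuine obstacle to be the bookkeeping that confines the candidate limit to the one fixed translate $\A+\m$ (and the attendant finiteness of slabs); the stabilization and convergence steps themselves are routine ultrametric arguments once the integrality and lower-boundedness of $\langle\mathbf{v},\cdot\rangle$ on $\A+\m$ are in hand.
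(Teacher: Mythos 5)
Your proof is correct. The paper itself gives no argument for this lemma --- it is simply quoted from \cite[Lemma~4.5]{SS} --- and your coefficientwise construction (stabilization of each coefficient from the ultrametric form of the Cauchy condition, membership of the candidate limit in $\F_q[[\A+\mathbf{m}]]$ thanks to the fixed translate, and convergence via the finiteness of the slabs $\set{\mathbf{z}\in\A+\mathbf{m}\,:\,\langle\mathbf{v},\mathbf{z}\rangle\le N}$) is precisely the standard completeness argument that the cited source relies on; it is complete as written.
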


		Although the anchor was defined in a slightly different way in \cite{SS}, the following lemma can be proved in the same manner as \cite[Lemma 4.10]{SS}. 	
					
		\begin{lemma}\label{units}
			If $\alpha\in\F_q((\A))$, then $\alpha$ is a unit of $\F_q((\A))$ if and only if $\anc_\A(\alpha)$ exists.
		\end{lemma}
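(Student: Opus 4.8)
The plan is to work with the two inner edge normals of the two-dimensional pointed cone $\A_\R=\{s\mathbf{x}+t\mathbf{y}:s,t\geq0\}$, so that $\A=\A_\R\cap\Z^2$. Choose primitive $\mathbf{v}_1,\mathbf{v}_2\in\Z^2$ with $\langle\mathbf{v}_1,\mathbf{x}\rangle=0<\langle\mathbf{v}_1,\mathbf{y}\rangle$ and $\langle\mathbf{v}_2,\mathbf{y}\rangle=0<\langle\mathbf{v}_2,\mathbf{x}\rangle$, so that $\A=\{\mathbf{z}\in\Z^2:\langle\mathbf{v}_1,\mathbf{z}\rangle\geq0,\ \langle\mathbf{v}_2,\mathbf{z}\rangle\geq0\}$. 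For nonzero $\gamma\in\F_q((\A))$ I put $m_i(\gamma)=\min_{\mathbf{p}\in\supp(\gamma)}\langle\mathbf{v}_i,\mathbf{p}\rangle$, which is finite and attained because $\supp(\gamma)$ lies in a translate of $\A$ and is discrete, and I let $\mathbf{c}(\gamma)$ be the unique point with $\langle\mathbf{v}_i,\mathbf{c}(\gamma)\rangle=m_i(\gamma)$ for $i=1,2$. The first step is the reformulation that $\anc_\A(\gamma)$ exists if and only if $\mathbf{c}(\gamma)\in\supp(\gamma)$, in which case $\anc_\A(\gamma)=\mathbf{c}(\gamma)$. Both implications follow at once from $\A=\A_\R\cap\Z^2$ together with the observation that a point of $\supp(\gamma)$ minimizing both functionals is precisely a point from which the whole support sits inside the shifted cone.

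For the direction ``anchor $\Rightarrow$ unit'' I would first reduce to $\anc_\A(\alpha)=\mathbf{0}$ by multiplying $\alpha$ with the monomial corresponding to $-\anc_\A(\alpha)$; single monomials are units of $\F_q((\A))$, so this leaves the unit property unchanged. Then $\supp(\alpha)\subseteq\A$, the coefficient at $\mathbf{0}$ is nonzero, and I can write $\alpha=a(1-\gamma)$ with $a\in\F_q^{\times}$ and $\operatorname{ord}_\A(\gamma)>0$. Since $\A$ is closed under addition, each $\gamma^n$ has support in $\A$, and $\operatorname{ord}_\A(\gamma^n)\geq n\operatorname{ord}_\A(\gamma)\to\infty$, so the partial sums of $\sum_{n\geq0}\gamma^n$ are Cauchy with support in the fixed set $\A$. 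Lemma~\ref{limitexists} then provides a limit in $\F_q((\A))$, which is $(1-\gamma)^{-1}$, giving $\alpha^{-1}=a^{-1}\sum_{n\geq0}\gamma^n$.

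The converse ``unit $\Rightarrow$ anchor'' is the substantial direction. Writing $\beta=\alpha^{-1}$, I would first establish $m_i(\alpha)+m_i(\beta)=0$ for $i=1,2$. The inequality $m_i(\alpha)+m_i(\beta)\geq0$ is the crux: it amounts to showing that the $\mathbf{v}_i$-leading part of $\alpha\beta$ does not vanish. For this I restrict each factor to its face $\langle\mathbf{v}_i,\cdot\rangle=m_i(\cdot)$, which lies on a line in the edge direction and is bounded below, hence is a nonzero element of a one-variable Laurent-series field along that edge; this field is an integral domain, so the product of the two leading parts is nonzero and contributes the term of lowest $\mathbf{v}_i$-value in $\alpha\beta$. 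As $\alpha\beta=1$ forces $m_i(\alpha\beta)=0$, this yields $m_i(\alpha)+m_i(\beta)=0$ and therefore $\mathbf{c}(\alpha)+\mathbf{c}(\beta)=\mathbf{0}$. I then inspect the coefficient of $\alpha\beta$ at the position $\mathbf{0}=\mathbf{c}(\alpha)+\mathbf{c}(\beta)$: any pair $(\mathbf{p},\mathbf{q})\in\supp(\alpha)\times\supp(\beta)$ with $\mathbf{p}+\mathbf{q}=\mathbf{0}$ must satisfy $\langle\mathbf{v}_i,\mathbf{p}\rangle=m_i(\alpha)$ and $\langle\mathbf{v}_i,\mathbf{q}\rangle=m_i(\beta)$ for both $i$, whence $\mathbf{p}=\mathbf{c}(\alpha)$ and $\mathbf{q}=\mathbf{c}(\beta)$. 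Since this coefficient equals $1\neq0$, the pair must actually occur, so $\mathbf{c}(\alpha)\in\supp(\alpha)$ and the anchor exists by the reformulation.

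I expect the main obstacle to be the non-cancellation step $m_i(\alpha)+m_i(\beta)=0$, that is, making precise that the $\mathbf{v}_i$-leading face of a product of $\A$-Laurent series equals the product of the leading faces and is nonzero. This is exactly where one identifies the face with an element of a one-dimensional Laurent-series domain, and where the construction of $\mathbf{v}_1,\mathbf{v}_2$ and the pointedness of $\A$ are genuinely used; everything else is bookkeeping with the two functionals, organized so that the domain argument is invoked once per edge. This is also the point that mirrors the corresponding argument in~\cite[Lemma~4.10]{SS}.
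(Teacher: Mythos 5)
Your proof is correct: the reformulation of the anchor condition via the two edge functionals $\mathbf{v}_1,\mathbf{v}_2$, the geometric-series inversion through Lemma~\ref{limitexists} for the easy direction, the non-cancellation of the $\mathbf{v}_i$-leading faces (each a nonzero element of a one-variable Laurent-series domain, using pointedness of $\A$) to get $m_i(\alpha)+m_i(\alpha^{-1})=0$, and the final localization of the unique pair $(\mathbf{c}(\alpha),\mathbf{c}(\alpha^{-1}))$ at the exponent $\mathbf{0}$ all hold up. The paper gives no proof of its own and simply invokes the argument of \cite[Lemma~4.10]{SS}; your write-up is a correct, self-contained rendering of essentially that same approach.
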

			
		Suppose $P,Q\in\F_q[X]$, with $m\vcentcolon=\deg{P}$, $n\vcentcolon=\deg{Q}$, and $0\leq n<m$.
		We shall now consider the polynomial $f=Qb-P$.
		Denote by $\mathbf{P}$ and $\mathbf{Q}$ the points $(0,m)$ and $(1,n)$, respectively, on the $(b,X)$-plane (see Figure~\ref{supp}).
		
		\begin{figure}[ht]
			\includegraphics{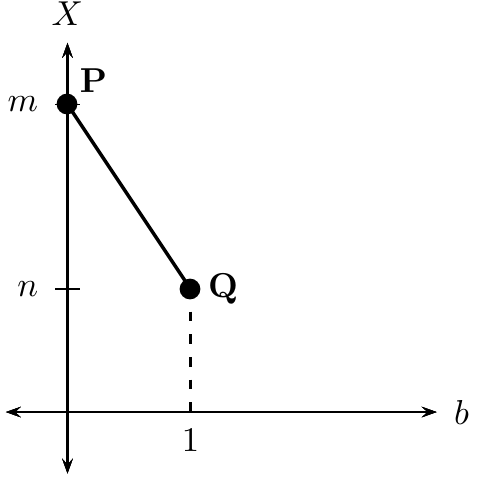}	
			\caption{Anchors of $f=Qb-P$}\label{supp}
		\end{figure}		
	
		If $\A_1$ is the cone in $\Z^2$ defined by
		\[\A_1=\set{s(0,-1)+t(1,n-m)\,:\,s,t\geq 0}\cap\Z^2,\]
		then $f\in\F_q((\A_1))$.
		Since the anchor of $f$ exists with $\anc_{\A_1}(f)=\mathbf{P}$, it follows from Lemma~\ref{units} that $f$ is a unit of $\F_q((\A_1))$.
		Let $p_m$ be the leading coefficient of $P$ and 
		\[f=Qb-P=-p_m X^m(1-R)\;\;\Longleftrightarrow\;\;R=1+\frac{f X^{-m}}{p_m}.\]
		Then the multiplicative inverse of $f$ in $\F_q((\A_1))$ is given by
		\begin{equation}\label{invP}
			f_{\mathbf{P}}^{-1}=\frac{-X^{-m}}{p_m}\sum_{i=0}^{\infty}R^i=\frac{-X^{-m}}{p_m}\sum_{i=0}^{\infty}{\lt(1+\frac{fX^{-m}}{p_m}\rt)}^i.
		\end{equation}
		By Lemma~\ref{limitexists} this sum converges since the partial sums form a Cauchy sequence with support in $\A_1$.
		Analogously, if $\A_2$ is defined as 
		\[\A_2=\set{s(0,-1)+t(-1,m-n)\,:\,s,t\geq 0}\cap\Z^2,\] 
		it follows that $f\in\F_q((\A_2))$ with $\anc_{\A_2}(f)=\mathbf{Q}$. 
		Applying the same argument as before, it follows that $f$ is a unit of $\F_q((\A_2))$ with 
		\begin{equation}\label{invQ}
			f_{\mathbf{Q}}^{-1}=\frac{X^{-n}b^{-1}}{q_n}\sum_{i=0}^{\infty}{\lt(1-\frac{fX^{-n}b^{-1}}{q_n}\rt)}^i,
		\end{equation}
		where $q_n$ is the leading coefficient of $Q$. 			
			
		If $\alpha$ is defined as in~\eqref{ALaurent}, we denote by
		\[\floor{\alpha}_b=\sum_{\begin{subarray}{c}(i,j)\in\supp(\alpha)\\i\geq 0\end{subarray}}{a_{ij}b^iX^j}\quad\hbox{and}\quad\set{\alpha}_b=\alpha-\floor{\alpha}_b.\]
		The notations $\floor{\alpha}_X$ and $\set{\alpha}_X$ are defined analogously.
		
		We need the following auxiliary lemma.
		
		\begin{lemma}\label{lem:cut}\mbox{}
			\begin{enumerate}
				\item Let $w\in\F_q[X]$ be given and $f_{\mathbf{P}}^{-1}$ as in \eqref{invP}. Then $\floor{w f_{\mathbf{P}}^{-1}}_X\in\F_q[b,X]$.
			
				\item Let $v\in\F_q[b](X)$ be given and $f_{\mathbf{Q}}^{-1}$ as in \eqref{invQ}. Then $\floor{\smash{v f_{\mathbf{Q}}^{-1}}}_b\in\F_q[b](X)$.
			\end{enumerate}
		\end{lemma}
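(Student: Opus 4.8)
The plan is to prove both parts by analysing the supports of the relevant products in the cones $\A_1$ and $\A_2$, using the explicit geometric expansions \eqref{invP} and \eqref{invQ} of the inverses. Throughout I identify a monomial $b^iX^j$ with the lattice point $(i,j)$, so that $\floor{\cdot}_X$ keeps the points with $j\ge 0$ and $\floor{\cdot}_b$ keeps the points with $i\ge 0$.

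For (i), the expansion \eqref{invP} shows $\supp(f_{\mathbf{P}}^{-1})\subseteq\A_1+(0,-m)$ (indeed $\anc_{\A_1}(f_{\mathbf{P}}^{-1})=-\mathbf{P}=(0,-m)$). Reading off the generators $(0,-1)$ and $(1,n-m)$ of $\A_1$, every $(i,j)\in\supp(f_{\mathbf{P}}^{-1})$ has $i\ge 0$ and $j\le i(n-m)-m$. Multiplying by $w\in\F_q[X]$, whose support lies in $\set{0}\times\set{0,\ldots,\deg w}$, raises the $X$-exponent by at most $\deg w$, so each $(i,j)\in\supp(wf_{\mathbf{P}}^{-1})$ with $i\ge 0$ obeys $j\le i(n-m)-m+\deg w$. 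Now apply $\floor{\cdot}_X$. Since $\deg P>\deg Q$ forces $n-m<0$, the two inequalities $0\le j\le i(n-m)-m+\deg w$ can hold simultaneously only for the finitely many $i$ with $i(m-n)\le\deg w-m$, and for each such $i$ only for finitely many $j$. Hence $\floor{wf_{\mathbf{P}}^{-1}}_X$ has finite support contained in $\set{i\ge 0}\times\set{j\ge 0}$, which is exactly the assertion $\floor{wf_{\mathbf{P}}^{-1}}_X\in\F_q[b,X]$.

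For (ii) the support bound controls only the $b$-exponents; the genuinely new point, where I expect the main work, is to see that the surviving coefficients are rational in $X$. Write $f_{\mathbf{Q}}^{-1}=\sum_{k\ge 1}g_kb^{-k}$, which is legitimate because \eqref{invQ} forces all $b$-exponents to be $\le -1$ (that is, $\anc_{\A_2}(f_{\mathbf{Q}}^{-1})=-\mathbf{Q}=(-1,-n)$). Comparing coefficients of $b^0$ and of $b^{-k}$ ($k\ge 1$) in the identity $(Qb-P)f_{\mathbf{Q}}^{-1}=1$ gives $Qg_1=1$ and $Qg_{k+1}=Pg_k$, whence by induction $g_k=P^{k-1}/Q^k\in\F_q(X)$; equivalently, this is what \eqref{invQ} evaluates to. Now let $v=\sum_{i=0}^d v_ib^i$ with $v_i\in\F_q(X)$ and $d=\deg_b v$. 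The coefficient of $b^{\ell}$ in $vf_{\mathbf{Q}}^{-1}$ is $\sum_{i-k=\ell}v_ig_k$, a finite sum lying in $\F_q(X)$ because $0\le i\le d$ and $k=i-\ell\ge 1$. Applying $\floor{\cdot}_b$ keeps only $\ell\ge 0$, which forces $1\le k=i-\ell\le d$ and hence $0\le\ell\le d-1$; thus only finitely many powers of $b$ survive, each with a coefficient in $\F_q(X)$, so that $\floor{vf_{\mathbf{Q}}^{-1}}_b\in\F_q[b](X)$.

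The only step that is not mere cone bookkeeping is the rationality claim in (ii): elements of $\F_q((\A))$ a priori have Laurent-series coefficients in $X$, so one really must invert $f$ explicitly to see that the coefficients $g_k$ of $f_{\mathbf{Q}}^{-1}$ lie in $\F_q(X)$. The recursion $Qg_{k+1}=Pg_k$ extracted from $(Qb-P)f_{\mathbf{Q}}^{-1}=1$ is the cleanest route, and it simultaneously makes the truncation under $\floor{\cdot}_b$ transparent. Part (i), by contrast, is purely a consequence of the strict inequality $\deg P>\deg Q$, which makes the $X$-degree along $\supp(f_{\mathbf{P}}^{-1})$ decrease as the $b$-degree grows and thereby guarantees finiteness after cutting at $j\ge 0$.
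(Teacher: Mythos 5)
Your proof is correct. Part (i) is essentially the paper's argument: the paper likewise observes that $\supp(f_{\mathbf{P}}^{-1})$ lies in the cone with vertex $(0,-m)$ spanned by $(0,-1)$ and $(1,n-m)$, that multiplying by $w$ shifts this support by at most $\deg w$ in the $X$-direction, and that cutting at $j\ge 0$ leaves at most a finite triangle in the first quadrant; your inequality $0\le j\le i(n-m)-m+\deg w$ is just the quantitative form of this. For part (ii), however, the paper only says it ``is proved in a similar way,'' and you are right to flag that the literal analogue of the cone argument merely shows that $\floor{\smash{v f_{\mathbf{Q}}^{-1}}}_b$ involves finitely many nonnegative powers of $b$ with coefficients in $\F_q((X^{-1}))$ --- it does not by itself yield that those coefficients are \emph{rational} in $X$, which is what membership in $\F_q[b](X)$ requires. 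Your extra step, extracting $Qg_1=1$ and $Qg_{k+1}=Pg_k$ from $(Qb-P)f_{\mathbf{Q}}^{-1}=1$ so that $g_k=P^{k-1}/Q^k\in\F_q(X)$, supplies exactly this missing rationality; an equivalent shortcut is to note that $\sum_{k\ge 1}(P^{k-1}/Q^k)\,b^{-k}$ is an inverse of $f$ supported in $\A_2+(-1,-n)$ and invoke uniqueness of inverses in $\F_q((\A_2))$. So your write-up is, if anything, more complete than the paper's at this point, and the bookkeeping ($\ell\ge 0$ forces $1\le k=i-\ell\le d$, hence $0\le\ell\le d-1$) is accurate.
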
			
		\begin{proof}
			It follows from~\eqref{invP} that $\supp(f_{\mathbf{P}}^{-1})$ is contained in the cone $C$ with vertex $(0,-m)$ bounded by the rays $(0,-1)$ and $(1, n-m)$. 
			Thus $\supp(w f_{\mathbf{P}}^{-1})$ is contained in $C+(\deg(w),0)$. 
			Thus, as $\floor{\cdot}_X$ cuts off the part of the support that is contained in the lower half plane, 
			$\supp(\floor{w f_{\mathbf{P}}^{-1}}_X)$ is either empty or contained in a finite triangle contained in the first quadrant. 
			This proves (i). Assertion (ii) is proved in a similar way.
 		\end{proof}
		
		The next theorem gives an explicit formula to compute $\pdb(w)$ for a given polynomial $w$ over $\F_q$, and vice versa.	
				
		\begin{theorem}\label{digitpoly}
			Let $(P/Q,\mathcal{D})$ be a rational function based digit system and set $f=Qb-P$.
			If $w\in\F_q[X]$, then
			\begin{align}
				\label{f1}
				\pdb(w)&=\tfrac{f}{Q}\set{f_{\mathbf{P}}^{-1}Qw}_X=w-\tfrac{f}{Q}\floor{f_{\mathbf{P}}^{-1}Qw}_X,\text{ and}\\
				\label{f2}
				w&=f\set{\smash{f_{\mathbf{Q}}^{-1}}\pdb(w)}_b=\pdb(w)-f\floor{\smash{f_{\mathbf{Q}}^{-1}}\pdb(w)}_b.
			\end{align}
		\end{theorem}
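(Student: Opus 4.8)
The plan is to read off both identities from the characterization of digit expansions in Lemma~\ref{lem:72}, exploiting the multiplicative inverses $f_{\mathbf{P}}^{-1}$ and $f_{\mathbf{Q}}^{-1}$ of $f=Qb-P$ together with a one-variable degree-cancellation argument. First I note that in each line the two displayed expressions are trivially equal: writing $\Phi=f_{\mathbf{P}}^{-1}Qw$ we have $\Phi=\floor{\Phi}_X+\set{\Phi}_X$ and $f\Phi=Qw$, so $\tfrac{f}{Q}\set{\Phi}_X=\tfrac{f\Phi}{Q}-\tfrac{f}{Q}\floor{\Phi}_X=w-\tfrac{f}{Q}\floor{\Phi}_X$; the same manipulation applied to $\phi=f_{\mathbf{Q}}^{-1}\pdb(w)$, using $ff_{\mathbf{Q}}^{-1}=1$, settles the second equality in~\eqref{f2}. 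Thus only the leftmost equalities require work.

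For~\eqref{f1} I set $y=\tfrac{f}{Q}\set{\Phi}_X$ and verify the two conditions of Lemma~\ref{lem:72}(i). By Lemma~\ref{lem:cut}(i) we have $\floor{\Phi}_X\in\F_q[b,X]$, so $\tfrac{f}{Q}\floor{\Phi}_X=f\cdot\tfrac{\floor{\Phi}_X}{Q}\in f\,\F_q(X)[b]$; hence $y=w-\tfrac{f}{Q}\floor{\Phi}_X$ lies in $\F_q(X)[b]$ (a genuine polynomial in $b$) and, differing from $w$ by an $f$-multiple, satisfies $\operatorname{eval_{pd}}(y)=w$ by Lemma~\ref{lem:equiv}(i). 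The crux is to show that, writing $Qy=f\set{\Phi}_X=\sum_i s_ib^i$, every coefficient is a digit, i.e. $\deg_X s_i<m$. This follows from a cancellation of $X$-degrees: since $\set{\Phi}_X$ carries only negative $X$-exponents while $\deg_X f=m$, the product $f\set{\Phi}_X$ has all $X$-exponents $\le m-1$; on the other hand $f\set{\Phi}_X=Qw-f\floor{\Phi}_X$, and as both $Qw$ and $f\floor{\Phi}_X$ have only nonnegative $X$-exponents, $f\set{\Phi}_X$ has no negative $X$-exponents. Therefore each $s_i\in\mathcal{D}$, and Lemma~\ref{lem:72}(i) gives $y=\pdb(w)$.

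The identity~\eqref{f2} is the mirror image in the variable $b$. The anchor of $f$ in $\A_2$ is $\mathbf{Q}=(1,n)$, so $f_{\mathbf{Q}}^{-1}$ is supported on negative powers of $b$; consequently $\set{\phi}_b$ has $b$-exponents $\le -1$, and since $f=Qb-P$ has $b$-exponents in $\set{0,1}$, the product $f\set{\phi}_b$ has $b$-exponents $\le 0$. Simultaneously $f\set{\phi}_b=\pdb(w)-f\floor{\phi}_b$, where $\floor{\phi}_b\in\F_q[b](X)$ by Lemma~\ref{lem:cut}(ii); both $\pdb(w)$ and $f\floor{\phi}_b$ carry only nonnegative $b$-exponents, so $f\set{\phi}_b$ has no negative $b$-exponents. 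Hence $f\set{\phi}_b$ is free of $b$. Applying $\operatorname{eval_d}$, which fixes $b$-free elements and, by Lemma~\ref{lem:equiv}(ii), is unaffected by the $f$-multiple $f\floor{\phi}_b$, yields $f\set{\phi}_b=\operatorname{eval_d}(\pdb(w))=w$, which is~\eqref{f2}.

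I expect the degree-cancellation step to be the main obstacle: establishing the digit bound $\deg_X s_i<m$ in~\eqref{f1} (dually, the $b$-freeness in~\eqref{f2}) requires pairing the \emph{upper} bound, coming from $\deg_X f=m$ acting on the strictly-negative part $\set{\Phi}_X$, with the \emph{lower} bound, coming from $f\Phi=Qw$ being a polynomial while the discarded part $f\floor{\Phi}_X$ has nonnegative $X$-exponents. Making these two one-sided bounds precise is exactly where the support and anchor bookkeeping for $f_{\mathbf{P}}^{-1}$ and $f_{\mathbf{Q}}^{-1}$ furnished by Lemmas~\ref{units} and~\ref{lem:cut} is needed.
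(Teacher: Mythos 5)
Your proposal is correct and follows essentially the same route as the paper: define the candidate expansion as $w$ minus the $f$-multiple $\tfrac{f}{Q}\floor{f_{\mathbf{P}}^{-1}Qw}_X$, use Lemma~\ref{lem:equiv} to preserve the evaluation, Lemma~\ref{lem:cut} for the support control, and the two one-sided degree bounds (nonnegative $X$-exponents from $Qw-f\floor{\cdot}_X$, upper bound $\deg_X<m$ from $f\set{\cdot}_X$) to conclude the coefficients are digits, with the mirror argument in $b$ for~\eqref{f2}. Your write-up is in fact slightly more explicit than the paper's about where each one-sided bound comes from, but the decomposition and the lemmas invoked are identical.
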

		\begin{proof}
			First, note that if we define 
			\[
			y\vcentcolon=\tfrac{f}{Q}\set{f_{\mathbf{P}}^{-1}Qw}_X=w-\tfrac{f}{Q}\floor{f_{\mathbf{P}}^{-1}Qw}_X,
			\]
			then we obtain 
			\[w-y=\tfrac{f}{Q}\floor{f_{\mathbf{P}}^{-1}Qw}_X.\]
			Now, Lemma~\ref{lem:cut} yields $(1/Q)\floor{f_{\mathbf{P}}^{-1}Qw}_X\in\F_q[b](X)$ and, hence,
			we may apply Lemma~\ref{lem:equiv}~(i) to obtain $\operatorname{eval_{pd}}(y)=\operatorname{eval_{pd}}(w)=w$.
			
			All that remains is to verify that $y$ is a $P/Q$-polynomial digit expansion. 
			In view of \eqref{eq:elementpoly}, we need to check that $Qy$ contains no negative power of $b$ and contains only powers $X^i$ of $X$ satisfying $0\leq i<\deg{P}$. 
			By Lemma~\ref{lem:cut}~(i)
			\begin{equation}\label{eq:pbx}
			Qy=f\set{f_{\mathbf{P}}^{-1}Qw}_X=Qw-f\floor{f_{\mathbf{P}}^{-1}Qw}_X \in \F_q[b,X]
			\end{equation}
			which proves that the conditions on the positivity of the powers of $b$ and $X$ in $Qy$ hold.
			In addition, we have 
			\[
			\deg_X(Qy)=\deg_X\lt(f\set{f_{\mathbf{P}}^{-1}Qw}_X\rt)<\deg{P}
			\]
			showing that the conditions on the powers of $X$ in $Qy$ are also satisfied. 
			It now follows from Lemma~\ref{lem:72} that~\eqref{f1} holds.
			
			Suppose that $\pdb(w)$ is a $P/Q$-polynomial digit expansion.
			To establish~\eqref{f2}, setting
			\begin{equation}\label{eq:prim}
			w'\vcentcolon=f\set{\smash{f_{\mathbf{Q}}^{-1}}\pdb(w)}_b=\pdb(w)-f\floor{\smash{f_{\mathbf{Q}}^{-1}}\pdb(w)}_b
			\end{equation}
			we have to show that $w'=w$. First note that by Lemma~\ref{lem:cut}~(ii)
			\[
			\deg_b\lt(f\set{\smash{f_\mathbf{Q}^{-1}\pdb(w)}}_b\rt)\leq 0
			\quad\text{and}\quad 
			\pdb(w)-f\floor{\smash{f_{\mathbf{Q}}^{-1}}\pdb(w)}_b\in\F_q[b]((X^{-1})),
			\]
			which implies that, $w'\in\F_q((X^{-1}))$. Since $\pdb(w) \in \F_q((b^{-1},X^{-1}))$, applying Lemma~\ref{lem:equiv}~(ii) to \eqref{eq:prim} yields 
			\begin{equation}\label{eq:wprimew}
			w'=\operatorname{eval_{d}}(w') = \operatorname{eval_{d}}(\pdb(w)).
			\end{equation}
			 Observe that $\operatorname{eval_{pd}}$ is just the restriction of $\operatorname{eval_{d}}$ to $\F_q[b](X)$. 
			Thus \eqref{eq:wprimew} implies that $w'=\operatorname{eval_{pd}}(\pdb(w))=w$ which completes the proof.
		\end{proof}		

		The analogue of Theorem~\ref{digitpoly} for any formal Laurent series over $\F_q$ in the disk $D(0,1)$ reads as follows.
		\begin{theorem}\label{digitLaurent}
			Let $(P/Q,\mathcal{D})$ be a rational function based digit system and set $f=Qb-P$.
			If $\alpha\in\F_q((X^{-1}))$ with $|\alpha|<1$, then
			\begin{align}
				\label{f3}
				\db(\alpha)&=\tfrac{f}{Q}\floor{\smash{f_{\mathbf{Q}}^{-1}}Q\alpha}_X=
				\alpha-\tfrac{f}{Q}\set{\smash{f_{\mathbf{Q}}^{-1}}Q\alpha}_X,\text{ and}\\
				\label{f4}
				\alpha&=f\floor{f_\mathbf{P}^{-1} \db(\alpha)}_b=\db(\alpha)-f\set{f_{\mathbf{P}}^{-1} \db(\alpha)}_b.
			\end{align}
		\end{theorem}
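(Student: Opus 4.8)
The plan is to mirror the proof of Theorem~\ref{digitpoly}, interchanging throughout the roles of the two anchors $\mathbf{P}$ and $\mathbf{Q}$ and of the truncation operators $\floor{\cdot}_X$ and $\set{\cdot}_X$; the hypothesis $|\alpha|<1$ forces $\floor{\alpha}=0$, so that by Theorem~\ref{wpq} the expansion $\db(\alpha)$ involves only negative powers of $b$. First I would record that the two expressions in \eqref{f3} agree: writing $f_{\mathbf{Q}}^{-1}Q\alpha=\floor{f_{\mathbf{Q}}^{-1}Q\alpha}_X+\set{f_{\mathbf{Q}}^{-1}Q\alpha}_X$, multiplying by $f/Q$, and using that $f_{\mathbf{Q}}^{-1}$ is the inverse of $f$ in $\F_q((\A_2))$ (see~\eqref{invQ} and Lemma~\ref{units}) gives $\alpha=\tfrac{f}{Q}\floor{f_{\mathbf{Q}}^{-1}Q\alpha}_X+\tfrac{f}{Q}\set{f_{\mathbf{Q}}^{-1}Q\alpha}_X$, which is exactly the asserted identity. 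I therefore set $\psi\vcentcolon=\tfrac{f}{Q}\floor{f_{\mathbf{Q}}^{-1}Q\alpha}_X$ and aim to prove $\psi=\db(\alpha)$ by verifying the two conditions of Lemma~\ref{lem:72}~(ii).

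For the evaluation condition I would show $\operatorname{eval_d}(\psi)=\alpha$. Since $\alpha-\psi=\tfrac{f}{Q}\set{f_{\mathbf{Q}}^{-1}Q\alpha}_X$, it suffices to prove $\tfrac{1}{Q}\set{f_{\mathbf{Q}}^{-1}Q\alpha}_X\in\F_q((X^{-1},b^{-1}))$, for then $\alpha-\psi\in f\,\F_q((X^{-1},b^{-1}))$ and Lemma~\ref{lem:equiv}~(ii) yields $\operatorname{eval_d}(\psi)=\operatorname{eval_d}(\alpha)=\alpha$. This is the analogue of Lemma~\ref{lem:cut} and is proved by the same support bookkeeping: from~\eqref{invQ} the support of $f_{\mathbf{Q}}^{-1}$ lies in the cone $\A_2$ anchored at $-\mathbf{Q}=(-1,-n)$, and multiplication by $Q\alpha$ (whose support sits on the column $b^{0}$ with $X$-degree at most $n-1$, as $|\alpha|<1$) keeps the support in a translate of $\A_2$; cutting at $X<0$ with $\set{\cdot}_X$ leaves a set contained in finitely many columns $b^{i}$ with $i\le-1$, which after division by $Q$ is a genuine element of $\F_q((X^{-1},b^{-1}))$.

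The harder part is the second condition, that the digit string of $\psi$ lies in $W_{P/Q}$. Writing $Q\psi=f\floor{f_{\mathbf{Q}}^{-1}Q\alpha}_X=\sum_{i\le-1}s_i b^{i}$, the same support analysis controls the coefficients: I would check that each $s_i$ is a polynomial in $X$ with $\deg_X s_i<m$, so that $s_i\in\mathcal{D}$, and that only negative powers of $b$ occur, exactly as in the $\deg_X(Qy)<\deg P$ step of Theorem~\ref{digitpoly}. The genuinely new point, and the step I expect to be the \emph{main obstacle}, is that, unlike the polynomial case where Lemma~\ref{th1} makes any legal finite string automatically the unique expansion, an infinite string with digits in $\mathcal{D}$ need not evaluate uniquely unless it labels a path in $T(P/Q)$. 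Hence I must show that the truncations of $\psi$ correspond to genuine nodes of the expansion graph, i.e.\ that the partial sums $v_{j+1}=\sum_{i=1}^{j}\tfrac{s_{-i}}{Q}(P/Q)^{j-i}$ lie in $\F_q[X]$ and that $s_{-j}\in L(v_j)$ for every $j$. This is where the anchor at $\mathbf{Q}$ is decisive: the leading $X$-behaviour of the truncated product $f_{\mathbf{Q}}^{-1}Q\alpha$ pins down the unique digit of degree below $\deg Q$ in each $L(v_j)$ (compare~\eqref{edgelabels} and Lemma~\ref{edge}), forcing the string into $W_{P/Q}$. With both conditions in hand, Lemma~\ref{lem:72}~(ii) gives $\psi=\db(\alpha)$ and hence \eqref{f3}.

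Finally, \eqref{f4} is the inverse direction and would be handled exactly as \eqref{f2} was for polynomials, now with $f_{\mathbf{P}}^{-1}$ in place of $f_{\mathbf{Q}}^{-1}$ and $\floor{\cdot}_b$ in place of $\set{\cdot}_b$. I would set $\alpha'\vcentcolon=f\floor{f_{\mathbf{P}}^{-1}\db(\alpha)}_b=\db(\alpha)-f\set{f_{\mathbf{P}}^{-1}\db(\alpha)}_b$; a support argument analogous to Lemma~\ref{lem:cut}~(ii), using $\anc_{\A_1}(f_{\mathbf{P}}^{-1})=-\mathbf{P}$, shows that $\alpha'\in\F_q((X^{-1}))$, so that $\operatorname{eval_d}(\alpha')=\alpha'$, while Lemma~\ref{lem:equiv}~(ii) applied to $\alpha'-\db(\alpha)\in f\,\F_q((X^{-1},b^{-1}))$ gives $\operatorname{eval_d}(\alpha')=\operatorname{eval_d}(\db(\alpha))=\alpha$. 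Combining the two identities yields $\alpha'=\alpha$, which is~\eqref{f4}.
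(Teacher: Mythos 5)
Your overall strategy coincides with the paper's: define $\psi=\tfrac{f}{Q}\floor{\smash{f_{\mathbf{Q}}^{-1}}Q\alpha}_X$, use Lemma~\ref{lem:equiv}~(ii) to get $\operatorname{eval_d}(\psi)=\alpha$, verify the two conditions of Lemma~\ref{lem:72}~(ii), and prove~\eqref{f4} by exactly the support argument you describe. The evaluation step, the bound $\deg_X(Q\psi)<\deg P$, and the observation that only negative powers of $b$ occur are all handled as in the paper.

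The one place where your proposal stops short of a proof is the step you yourself flag as the main obstacle: membership of the digit string in $W_{P/Q}$. You correctly reduce this to showing that the partial sums $v_{j+1}$ are polynomials with $s_{-j}\in L(v_j)$, but the mechanism you offer --- that ``the leading $X$-behaviour of the truncated product pins down the unique digit of degree below $\deg Q$ in each $L(v_j)$'' --- does not work as stated: the digit $s_{-j}$ produced by the expansion is in general \emph{not} the distinguished element $m_{v_j}$ of $L(v_j)$ of degree $<\deg Q$ (if it were, every expansion would be a minimal string in the sense of Section~\ref{sec:6}), and Lemma~\ref{edge} selects a digit by matching the coefficients of $X^n,\dots,X^{m-1}$ of an auxiliary series, not by minimality of degree. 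The paper closes this step without invoking Lemma~\ref{edge} at all: writing $\floor{\smash{f_{\mathbf{Q}}^{-1}}Q\alpha}_X=\sum_{i\geq 1}v_{-i}b^{-i}$, each $v_{-i}$ is automatically in $\F_q[X]$ because it is a coefficient of an $X$-integer part, and the identity $Q\psi=f\floor{\smash{f_{\mathbf{Q}}^{-1}}Q\alpha}_X=(Qb-P)\sum_{i\geq1} v_{-i}b^{-i}$ yields, by comparing coefficients of $b^{-i}$, the recursion $s_{-i}=Qv_{-i-1}-Pv_{-i}$, i.e.\ $v_{-i-1}=(Pv_{-i}+s_{-i})/Q\in\F_q[X]$, which is precisely the edge condition in $T(P/Q)$; together with $v_{-1}=0$ (a consequence of $f_{\mathbf{Q}}^{-1}=\tfrac1Q b^{-1}+O(b^{-2})$ and $|\alpha|<1$, which makes the path start at the root) this places the string in $W_{P/Q}$. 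You would need to supply this, or an equivalent, computation; as written, your sketch of the decisive step does not go through.
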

		\begin{proof}
			Define 
			\[\psi\vcentcolon=\tfrac{f}{Q}\floor{\smash{f_{\mathbf{Q}}^{-1}}Q\alpha}_X=\alpha-\tfrac{f}{Q}\set{\smash{f_{\mathbf{Q}}^{-1}}Q\alpha}_X.\]
			Then 
			\[\alpha-\psi=\tfrac{f}{Q}\set{\smash{f_{\mathbf{Q}}^{-1}}Q\alpha}_X.\]
			Since the form of $f_{\mathbf{Q}}^{-1}$ in~\eqref{invQ} implies that $(1/Q)\set{\smash{f_{\mathbf{Q}}^{-1}}Q\alpha}_X\in\F_q((b^{-1},X^{-1}))$, 
			Lemma~\ref{lem:equiv}~(ii) yields that $\operatorname{eval_d}(\psi)=\operatorname{eval_d}(\alpha)=\alpha$.
			
			It remains to show that $\psi$ is a $P/Q$-digit expansion. 
			For this purpose we first show that 
			\begin{equation}\label{eq:Qa}
				Q\psi=\sum_{i=-\infty}^k{s_ib^i}
			\end{equation} 
			with $s_i\in\F_q[X]$ and $\deg{s_i}<\deg{P}$. Indeed this follows from
			\[
			\deg_X({Q\psi})=\deg_X\lt(Q\alpha-f\set{\smash{f_{\mathbf{Q}}^{-1}}Q\alpha}_X\rt)<\deg{P}
			\]
			and
			\[
				Q\psi = f\floor{\smash{f_{\mathbf{Q}}^{-1}}Q\alpha}_X\in\F_q((b^{-1}))[X].
			\]
			In what follows, $g=O(b^{-\ell})$ means that $g\in\F_q((b^{-1},X^{-1}))$ with $\deg_b{g}\le -\ell$. 
			Since $f f_{\mathbf{Q}}^{-1}=1$ and $\deg_b(f_{\mathbf{Q}}^{-1})\le -1$ we must have 
			\begin{alignat*}{2}
				f_{\mathbf{Q}}^{-1}=\tfrac{1}{Q}b^{-1} + O(b^{-2})\;&\Longleftrightarrow\;& f_{\mathbf{Q}}^{-1}Q\alpha&=\alpha b^{-1}+O(b^{-2})\\ 
				& \Longleftrightarrow\; &\floor{\smash{f_{\mathbf{Q}}^{-1}}Q\alpha}_X&=\floor{O(b^{-2})}_X
			\end{alignat*}
			(note that the last line follows because $|\alpha| < 1$).
			Hence, $\deg_b{\floor{\smash{f_{\mathbf{Q}}^{-1}}Q\alpha}_X}\leq -2$ and so $\deg_b\lt(f\floor{\smash{f_{\mathbf{Q}}^{-1}}Q\alpha}_X\rt)\leq -1$. 
			Thus, we can strengthen \eqref{eq:Qa} by writing 
			\begin{equation}
				\begin{aligned}
					\floor{\smash{f_{\mathbf{Q}}^{-1}}Q\alpha}_X&=\sum_{i=1}^{\infty}v_{-i}b^{-i},\text{ and}\\			
					Q\psi= f\floor{\smash{f_{\mathbf{Q}}^{-1}}Q\alpha}_X&=\sum_{i=1}^{\infty}s_{-i}b^{-i},
				\end{aligned}\label{digitstree}
			\end{equation}
			where $v_{-i},s_{-i}\in\F_q[X]$, and $v_{-1}=0$.
			Comparing coefficients in~\eqref{digitstree}, we obtain $s_{-i}=Qv_{-i-1}-Pv_{-i}$ for all $i\in\N$.  
			Induction yields that for every positive integer $k$, it holds that 
			\[\frac{s_{-1}}{Q}{\lt(\frac{P}{Q}\rt)}^{k-1}+\frac{s_{-2}}{Q}{\lt(\frac{P}{Q}\rt)}^{k-2}+\cdots+\frac{s_{-k}}{Q}=v_{k+1}\in\F_q[X].\]
			Hence, $s_{-1}s_{-2}\cdots\in W_{P/Q}$ and applying Lemma~\ref{lem:72} completes the proof of~\eqref{f3}.
						
			We now proceed to show~\eqref{f4}.
			Assume that $\db(\alpha)$ corresponds to a $P/Q$-digit expansion of some $\alpha\in D(0,1)$.
			Let
			\[\alpha'\vcentcolon=f\floor{f_\mathbf{P}^{-1} \db(\alpha)}_b=\db(\alpha)-f\set{f_{\mathbf{P}}^{-1} \db(\alpha)}_b.\]
			By Lemma~\ref{lem:equiv}, we have to show that $\alpha'\in\F_q((X^{-1}))$. However, since
			\[
			\deg_b\lt(\db(\alpha)-f\set{f_{\mathbf{P}}^{-1} \db(\alpha)}_b\rt)\leq 0
			\quad\text{and}\quad
			f\floor{f_\mathbf{P}^{-1} \db(\alpha)}_b \in \F_q[b]((X^{-1})),\]
			we conclude that no nonzero power of $b$ occurs in $\alpha'$.  
			Hence, $\alpha'\in\F_q((X^{-1}))$.
		\end{proof}							
			
		\begin{remark}
			Note that the coefficients $v_{-i}$ of $\floor{\smash{f_{\mathbf{Q}}^{-1}}Q\alpha}_X$ in~\eqref{digitstree} have unbounded degree 
			and are precisely the $v_i$ (with a change of sign in the index) that are defined in~\eqref{rec2}.
		\end{remark}
		
		\begin{remark}\label{rem:k}
			For general $\alpha\in\F_q((X^{-1}))$, there exists $k\in\N$ such that $(P/Q)^{-k}\alpha\in D(0,1)$. 
			Then, since 
			\[\db(\alpha)=b^k\db\bigg({\lt(\frac{P}{Q}\rt)}^{-k}\alpha\bigg)\]
			we can still use Theorem~\ref{digitLaurent} in this case.
		\end{remark}
			
		\begin{remark}
			We mention that our unicity theorems as well as Theorems~\ref{digitpoly} and~\ref{digitLaurent} remain valid even if $\F_q$ is replaced by an arbitrary field $\F$. 
			However, since this would mean that our digit sets become infinite and thus all our automaticity results cannot be carried over to this setting 
			we decided to formulate all our results for the case of finite fields.
		\end{remark}			
			
	\section{Christol's Theorem for $P/Q$-digit systems}\label{sec:7}				
		In this section we will prove a version of the Theorem of Christol for rational function based digit systems 
		({\it cf}.~\cite{Christol1979,Christol1980} for the classical Theorem of Christol and~\cite[Theorem 12.2.5]{AS} for a proof of it in a more modern notation).  
		We first need to define the notion of $p$-automaticity for elements of $\F_q((\A))$. 
		For this we consider a two-dimensional sequence $(a_{mn})_{m,n\in\Z}$. 
		We denote the four quadrants of the plane by 
		\begin{alignat*}{4}
			\mathbf{Q}_1\vcentcolon&=\set{(x,y)\in\Z^2\,:\,x\geq 0, y\geq 0}, &\quad \mathbf{Q}_3\vcentcolon&=\set{(x,y)\in\Z^2\,:\,x\leq 0, y\leq 0}\\
			\mathbf{Q}_2\vcentcolon&=\set{(x,y)\in\Z^2\,:\,x\leq 0, y\geq 0}, &\quad \mathbf{Q}_4\vcentcolon&=\set{(x,y)\in\Z^2\,:\,x\geq 0, y\leq 0}.
		\end{alignat*}
		Then the sequence $(a_{mn})_{m,n\in\Z}$ is said to be $p$-automatic 
		if for each $i\in\set{1,2,3,4}$, there exists a DFAO that gives $a_{mn}$, where $(m,n)\in\mathbf{Q}_i$, 
		as output. Here, the input is the string of ordered pairs $(m_0,n_0)\cdots (m_k,n_k)$, where
		\[|m|=\sum_{i=0}^k{m_ip^i}\quad\text{and}\quad |n|=\sum_{i=0}^k{n_ip^i}\]
		with $m_i,n_i\in\set{0,\ldots,p-1}$ for $0\leq i\leq k$.
		
		We say that the $\A$-Laurent series $\alpha$ of the form~\eqref{ALaurent} is \emph{$p$-automatic} whenever 
		the two-dimensional sequence $(a_{ij})_{i,j\in\Z}$ formed by its coefficients is $p$-automatic.
		It is clear from the definition that $\alpha$ is $p$-automatic exactly when $\floor{\alpha}_{b}$ and $\set{\alpha}_{b}$
		(respectively, $\floor{\alpha}_{X}$ and $\set{\alpha}_{X}$) are $p$-automatic.
		
		We say that $\alpha\in\F_q((X^{-1}))$ is \emph{algebraic over $\F_q[X]$} if $s_0+s_1\alpha+\cdots+s_n\alpha^n=0$ 
		for some $s_0,s_1\ldots,s_n\in \F_q[X]$ which are not all equal to $0$.
		Similarly, a series $\alpha\in\F_q((\A))$ is \emph{algebraic over $\F_q[b,X]$} if $f_0+f_1\alpha+\cdots+f_n\alpha^n=0$
		for some $f_0,f_1\ldots,f_n\in\F_q[b,X]$ which are not all equal to $0$.		
		
		We need the following version of Christol's Theorem for $\F_q((\A))$.
		
		\begin{lemma}[{{\it cf.}~\cite[Proposition 5.12]{SS}}]\label{lem:2d}
			Let $q=p^s$ and $\A$ as in~\eqref{def:A}. 
			Then $\alpha\in\F_q((\A))$ is $p$-automatic if and only if $\alpha$ is algebraic over $\F_q[b,X]$.
		\end{lemma}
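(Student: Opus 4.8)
The plan is to reduce the statement to a standard two-dimensional version of Christol's Theorem by a monomial change of variables that straightens the cone $\A$ into the standard quadrant $\mathbf{Q}_1$, and then to transport both properties—$p$-automaticity and algebraicity—through this identification.

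First I would introduce indeterminates adapted to the generators of $\A$. Writing $\mathbf{x}=(x_1,x_2)$ and $\mathbf{y}=(y_1,y_2)$, set $u=b^{x_1}X^{x_2}$ and $v=b^{y_1}X^{y_2}$. Since $\mathbf{x},\mathbf{y}$ are linearly independent, the matrix $M=(\mathbf{x}\mid\mathbf{y})$ is invertible over $\Q$, and the substitution $b^iX^j\mapsto u^sv^t$ with $(i,j)=s\mathbf{x}+t\mathbf{y}$ identifies $\F_q[[\A]]$ with a subring of $\F_q[[u,v]]$ and, after inverting $u$ and $v$, identifies $\F_q((\A))$ with the corresponding field of two-variable Laurent series. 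In the cases of interest the cone is unimodular ($|\det M|=1$ for both $\A_1$ and $\A_2$), so here the identification is an isomorphism $\F_q[[\A]]\cong\F_q[[u,v]]$. A point $\mathbf{a}=s\mathbf{x}+t\mathbf{y}\in\A$ has $s,t\geq0$, so the shifted-cone support $\A+\m$ of an element $\alpha$ becomes a shifted first quadrant in the $(s,t)$-coordinates; thus the image of $\alpha$ is, up to a monomial factor coming from $\m$, an ordinary power series in $u,v$.

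Next I would track the two properties. For algebraicity: $b,X$ are monomials in $u,v$ and conversely $u,v$ are monomials in $b,X$ (with exponents governed by $M^{-1}$), so $\F_q(u,v)$ is a finite extension of $\F_q(b,X)$ of degree $|\det M|$; as algebraicity is transitive over finite extensions, $\alpha$ is algebraic over $\F_q[b,X]$ if and only if its image is algebraic over $\F_q[u,v]$, and the monomial shift by $\m$ alters $\alpha$ only by a unit of the field and hence is harmless. For automaticity: the reindexing $(i,j)\mapsto(s,t)=M^{-1}(i,j)^{\mathsf T}$ is an invertible integer-affine map (after absorbing the shift $\m$), and $p$-automatic two-dimensional sequences are preserved under such changes of index, treated separately on each quadrant and residue class. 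Thus the quadrant-wise $p$-automaticity of $(a_{ij})_{i,j\in\Z}$ is equivalent to the $p$-automaticity of the reindexed coefficient array supported on $\mathbf{Q}_1$.

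With the problem transported to $\F_q[[u,v]]$, I would conclude by invoking the two-dimensional Christol Theorem (Salon's extension; {\it cf.}~\cite{AS}): a power series in $\F_q[[u,v]]$ is algebraic over $\F_q(u,v)$ if and only if its coefficient array is $p$-automatic, where $q=p^s$. Combining this with the two equivalences above yields the lemma; this mirrors the route of \cite[Proposition~5.12]{SS}, the only change being the bookkeeping of the anchor, which does not enter the argument. I expect the genuine obstacle to be the automaticity transfer: matching the intrinsic, quadrant-by-quadrant notion of $p$-automaticity of a $\Z^2$-indexed array against the cone-adapted coordinates, since $M^{-1}$ can mix the four standard quadrants, so one must verify closure of $p$-automaticity under this linear map—together with the harmless but notation-heavy shift by $\m$—on each quadrant separately.
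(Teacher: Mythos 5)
First, a point of reference: the paper does not prove Lemma~\ref{lem:2d} at all --- it is imported verbatim from \cite[Proposition~5.12]{SS} --- so there is no internal proof to measure your attempt against. Judged on its own terms, your strategy (straighten the cone $\A$ by a monomial substitution and invoke Salon's two--dimensional Christol theorem) is the right kind of reduction and is in the spirit of the cited source. But as written it has two real gaps. The smaller one: the lemma is stated for an arbitrary $\A$ as in~\eqref{def:A}, i.e.\ \emph{all} lattice points of the real cone spanned by $\mathbf{x},\mathbf{y}$, and your identification $\F_q[[\A]]\cong\F_q[[u,v]]$ only holds when $|\det M|=1$. When $|\det M|>1$ the cone contains lattice points that are not non-negative \emph{integer} combinations of $\mathbf{x}$ and $\mathbf{y}$, and your substitution would assign them fractional exponents. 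You acknowledge this and retreat to ``the cases of interest'' ($\A_1$ and $\A_2$, which are indeed unimodular); that suffices for every application in the paper, but it does not prove the lemma as stated --- one would have to subdivide $\A$ into unimodular subcones (or restrict the statement).

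The more serious gap is the step you yourself single out as ``the genuine obstacle'' and then leave unverified: that quadrant-wise $p$-automaticity of $(a_{ij})_{i,j\in\Z}$ is equivalent to $p$-automaticity of the reindexed array $\bigl(a_{M(s,t)^{\mathsf T}+\m}\bigr)_{(s,t)\in\N^2}$. Since automaticity is defined here through the base-$p$ digits of $|i|$ and $|j|$, and $M^{-1}$ both scrambles those digits and mixes the four quadrants, this is not a formal triviality; without it the reduction to Salon's theorem is incomplete, and it is precisely the load-bearing part of the argument. The claim is in fact true --- it follows, for instance, from the first-order characterization of $p$-automatic sets (B\"uchi--Bruy\`ere): the fibers $\set{(i,j)\,:\,a_{ij}=c}$ are $p$-definable, integer-linear maps, translations, and rational cones are Presburger-definable, so preimages and restrictions remain $p$-definable --- but some such argument, or at least a precise citation, must be supplied. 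A way to avoid the issue altogether, closer to how the anchored cones are actually handled in~\cite{SS}, is to decompose the support quadrant by quadrant (matching the given definition of automaticity) and reflect each piece into $\N^2$ via the monomial, digit-preserving substitutions $b\mapsto b^{-1}$ and $X\mapsto X^{-1}$, applying Salon's theorem to each piece; then only the closure of algebraicity and automaticity under finite sums and under restriction to a quadrant needs to be checked.
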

					
		We are now ready to state the main result of this section.
		Observe that each computational step in~\eqref{f3} and~\eqref{f4} is either automatic or algebraic.
		Theorem~\ref{Christol} then follows by switching back and forth between both notions and applying Lemma~\ref{lem:2d}.
		
		\begin{theorem}\label{Christol}
			Let $(P/Q,\mathcal{D})$ be a rational function based digit system, $q=p^s$ and $\alpha\in\F_q((X^{-1}))$. 
			Then $\db(\alpha)$ is $p$-automatic if and only if $\alpha$ is algebraic over $\F_q[X]$.
		\end{theorem}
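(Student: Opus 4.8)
The plan is to read off the two directions of the equivalence directly from the explicit formulas \eqref{f3} and \eqref{f4} of Theorem~\ref{digitLaurent}, using Lemma~\ref{lem:2d} as a dictionary that translates between $p$-automaticity and algebraicity over $\F_q[b,X]$. The guiding observation is that every operation occurring in those formulas falls into exactly one of two classes. On one hand there are multiplications by elements that are algebraic over $\F_q[b,X]$: the polynomial $f=Qb-P$, the scalar $1/Q$, and the inverses $f_{\mathbf{P}}^{-1},f_{\mathbf{Q}}^{-1}$ (these are algebraic because $ff_{\mathbf{P}}^{-1}=ff_{\mathbf{Q}}^{-1}=1$, so in fact they equal $1/f\in\F_q(b,X)$). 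On the other hand there are the truncations $\floor{\cdot}_X$ and $\floor{\cdot}_b$. Multiplication by an algebraic element preserves algebraicity over $\F_q[b,X]$, while the truncations preserve $p$-automaticity by the quadrantwise remark stated just before the theorem. Hence, to push an object through a formula, I would remain in the algebraic world across every multiplication and switch into the automatic world, via Lemma~\ref{lem:2d}, exactly at each truncation, switching back immediately afterwards.

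For the forward implication I would first reduce to the case $|\alpha|<1$ by means of Remark~\ref{rem:k}: since $\db(\alpha)=b^k\db\bigl((P/Q)^{-k}\alpha\bigr)$ and multiplication by $b^k$ changes neither algebraicity nor $p$-automaticity, and since $(P/Q)^{-k}\alpha$ is algebraic over $\F_q[X]$ whenever $\alpha$ is, this reduction is harmless (and works in both directions). Assuming $\alpha$ algebraic over $\F_q[X]$, it is a fortiori algebraic over $\F_q[b,X]$, and therefore so is $f_{\mathbf{Q}}^{-1}Q\alpha$. Viewing this element inside the cone algebra $\F_q((\A_2))$, Lemma~\ref{lem:2d} makes it $p$-automatic; the remark then shows $\floor{\smash{f_{\mathbf{Q}}^{-1}}Q\alpha}_X$ is $p$-automatic; Lemma~\ref{lem:2d} applied again turns this truncation back into an element algebraic over $\F_q[b,X]$; and finally multiplying by $f/Q$ and invoking Lemma~\ref{lem:2d} once more shows that $\db(\alpha)=\tfrac{f}{Q}\floor{\smash{f_{\mathbf{Q}}^{-1}}Q\alpha}_X$ is $p$-automatic.

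The converse runs along \eqref{f4} in the same rhythm: if $\db(\alpha)$ is $p$-automatic then it is algebraic over $\F_q[b,X]$, whence $f_{\mathbf{P}}^{-1}\db(\alpha)$ is algebraic, hence $p$-automatic by Lemma~\ref{lem:2d}; the remark makes $\floor{f_{\mathbf{P}}^{-1}\db(\alpha)}_b$ $p$-automatic; Lemma~\ref{lem:2d} returns it to the algebraic world; and multiplying by $f$ shows $\alpha=f\floor{f_{\mathbf{P}}^{-1}\db(\alpha)}_b$ is algebraic over $\F_q[b,X]$. Because $\alpha\in\F_q((X^{-1}))$ contains no power of $b$, I would conclude by collecting powers of $b$ in a nontrivial relation $\sum_j g_j(b,X)\alpha^j=0$: writing $g_j=\sum_\ell g_{j\ell}(X)b^\ell$ and comparing, for each $\ell$, the coefficient of $b^\ell$ (legitimate since $b$ is a genuine indeterminate over $\F_q((X^{-1}))$) produces relations $\sum_j g_{j\ell}(X)\alpha^j=0$ over $\F_q[X]$, at least one of which is nontrivial, so $\alpha$ is algebraic over $\F_q[X]$.

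The step I expect to be the main obstacle is the cone bookkeeping needed to apply Lemma~\ref{lem:2d} legitimately at each switch, since the lemma is stated for series living in a single pointed-cone algebra $\F_q((\A))$. Thus I must verify that every intermediate object — $f_{\mathbf{Q}}^{-1}Q\alpha$ and its $X$-truncation, $\db(\alpha)$, and the analogues $f_{\mathbf{P}}^{-1}\db(\alpha)$ and its $b$-truncation from \eqref{f4} — has support contained in $\A+\mathbf{m}$ for a suitable pointed cone, reading off the supports of $f_{\mathbf{P}}^{-1}$ and $f_{\mathbf{Q}}^{-1}$ from \eqref{invP} and \eqref{invQ} and the shape of $\db(\alpha)$ recorded in \eqref{digitstree}, and checking that the relevant Minkowski sums of cones remain pointed. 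The remaining ingredients are routine: that products of elements algebraic over $\F_q[b,X]$ within a common cone-algebra fraction field are again algebraic is standard field theory, and that the truncations preserve $p$-automaticity is precisely the remark cited above.
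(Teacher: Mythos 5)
Your proposal is correct and follows essentially the same route as the paper: both directions are read off from \eqref{f3} and \eqref{f4} by alternating between algebraicity over $\F_q[X,b]$ (preserved under multiplication by $f$, $1/Q$, $f_{\mathbf{P}}^{-1}$, $f_{\mathbf{Q}}^{-1}$) and $p$-automaticity (preserved under the truncations $\floor{\cdot}_X$, $\floor{\cdot}_b$), switching via Lemma~\ref{lem:2d} at each step. Your explicit reduction to $|\alpha|<1$ via Remark~\ref{rem:k}, the coefficient-of-$b^\ell$ argument for descending to algebraicity over $\F_q[X]$, and the flagged cone bookkeeping are all sound refinements of details the paper leaves implicit.
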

		\begin{proof}
			If $\db(\alpha)$ is $p$-automatic, then $\db(\alpha)$ is algebraic over $\F_q[b,X]$ by Lemma~\ref{lem:2d}. 
			Since $f\in\F_q[b,X]$, it follows that $f_{\mathbf{P}}^{-1} \db(\alpha)$ is also algebraic. 
			Hence, $f_{\mathbf{P}}^{-1}\db(\alpha)$ is $p$-automatic by Lemma~\ref{lem:2d}. 
			Thus, $\floor{f_{\mathbf{P}}^{-1} \db(\alpha)}_b$ is $p$-automatic and so it is algebraic. 
			Finally, by~\eqref{f4}, we obtain that $\alpha=f\floor{f_{\mathbf{P}}^{-1} \db(\alpha)}_b$ is algebraic over $\F_q[b,X]$ by Lemma~\ref{lem:2d}. 
			Since $\alpha$ does not depend on $b$ it is even algebraic over $\F_q[X]$.
			
			Conversely, if $\alpha$ is algebraic over $\F_q[X]$ then $f_{\mathbf{Q}}^{-1}Q\alpha$ is algebraic over $\F_q[b,X]$. 
			This means that $\smash{f_{\mathbf{Q}}^{-1}}Q\alpha$ is $p$-automatic by Lemma~\ref{lem:2d}. 
			Thus, $\floor{\smash{f_{\mathbf{Q}}^{-1}}Q\alpha}_{X}$ is also $p$-automatic and, hence, algebraic by Lemma~\ref{lem:2d}. 
			It now follows from~\eqref{f3} that $\db(\alpha)=(f/Q)\floor{\smash{f_{\mathbf{Q}}^{-1}}Q\alpha}_X$ is algebraic and, hence, $p$-automatic by Lemma~\ref{lem:2d}.
	\end{proof}	
	
	\section{Connection to Mahler's Problem for finite fields}\label{sec:6}
		A well-known unsolved problem in number theory is the quest for the distribution of the sequence $((p/q)^i \wmod{1})_{i\in\N}$, where $p,q$ are coprime integers with $p>q\geq 2$.  
		The analogous problem for polynomials over finite fields was considered in~\cite{ADKK:01}.  
		In particular, if $f=P/Q$, where $P,Q\in\F_q[X]$ are coprime with $\deg P>\deg Q \ge 1$, then it was shown that for specific cases that
		the distribution of the sequence $(\set{f^i})_{i\in\N}$ is the Dirac measure $\delta_0$ at $0\in\F_q((X^{-1}))$ or is continuous.
		
		A reformulation of this problem by Mahler is to study instead the distribution of the sequence $(z{(p/q)}^i\wmod{1})_{i\in\N}$ for different real values of $z$.  
		Using the $p/q$-digit expansion of real numbers, it was established in~\cite{AFS} that if $p\geq 2q-1$, 
		then there exists a subset $Y_{p/q}$ of $\R/\Z$ with Lebesgue measure $q/p$ such that
		\[\{z\in\R\,:\,\text{ there is }i_0\in\N \text{ such that } \,z{(p/q)}^i \wmod{1}\in Y_{p/q}\text{ for }i\ge i_0\}\]
		is countable infinite.  
		
		We give here the analogous result for the finite fields case.
		Denote by $w(v)$ the label string of an infinite path that starts with the node $v$ in $T(P/Q)$ 
		and that always follows the edges whose labels are digits of degree less than $\deg{Q}$.  
		By~\eqref{edgelabels}, $w(v)$ is unique for each $v\in\F_q[X]$.  
		We call these strings \emph{minimal strings}.  
		An element of $W_{P/Q}$ is said to be \emph{eventually minimal} if it has a suffix which is a minimal string.  
		Clearly, as $\F_q[X]$ is countable, there are countably infinitely many eventually minimal elements of $W_{P/Q}$.
		
		\begin{theorem}\label{mahler}
			Let $(P/Q,\mathcal{D})$ be a rational function based digit system and set $m=\deg P$, $n=\deg Q$ and $r=m-n$.
			If \[Y_{P/Q}=\bigcup_{i=1}^{\infty}[q^{-ri-n},q^{-ri}),\]
			then the set 
			\[\{\alpha\in\F_q((X^{-1}))\,:\,\text{there is }i_0\in\N \text{ such that } |\{\alpha{(P/Q)}^i\}| \in Y_{P/Q}\cup\set{0}\text{ for }i\ge i_0\}\]
			consists of all $\alpha\in\F_q((X^{-1}))$ such that the digit string associated with the $P/Q$-digit expansion of $\alpha$ is eventually minimal.
		\end{theorem}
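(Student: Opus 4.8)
The plan is to translate the analytic condition on $|\set{\alpha(P/Q)^i}|$ into a combinatorial condition on the digit string $\sexpan{\alpha}_{P/Q}=(s_k\cdots s_0.s_{-1}s_{-2}\cdots)_{P/Q}$ and then to recognize that condition as eventual minimality. The first ingredient is the effect of multiplication by $P/Q$ on expansions: multiplying~\eqref{laurserexp} by $(P/Q)^i$ just shifts the radix point, so the digit string of $\alpha(P/Q)^i$ is the same element of $W_{P/Q}$ read with the point moved $i$ places to the right, and by the uniqueness in Theorem~\ref{wpq} this shifted string is the $P/Q$-digit expansion of $\alpha(P/Q)^i$. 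Hence its fractional part is
\[
\gamma_i\vcentcolon=\set{\alpha(P/Q)^i}=\pi(.s_{-i-1}s_{-i-2}\cdots)=\sum_{\ell\ge 1}\frac{s_{-i-\ell}}{Q}{\lt(\frac{P}{Q}\rt)}^{-\ell}\in D(0,1),
\]
so that $\gamma_i$ depends only on the tail $s_{-i-1}s_{-i-2}\cdots$ of the digit string.

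The heart of the argument is a dichotomy read off from the identity $P\gamma_i=s_{-i-1}+Q\gamma_{i+1}$ together with $\deg(Q\gamma_{i+1})\le n-1$ (recall $\gamma_{i+1}\in D(0,1)$). If $s_{-i-1}$ is \emph{not} the minimal digit, i.e.\ $\deg s_{-i-1}\ge n$ by~\eqref{edgelabels}, then $s_{-i-1}$ dominates, so $\deg(P\gamma_i)=\deg s_{-i-1}$ and therefore $\deg\gamma_i\in\set{-r,\ldots,-1}$, that is $|\gamma_i|\in[q^{-r},q^{-1}]$. If instead $s_{-i-1}=m_{v}$ is minimal ($\deg s_{-i-1}<n$), then both summands have degree $<n$ and hence $\deg\gamma_i<-r$. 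Since the degree set of $Y_{P/Q}$ is $\bigcup_{j\ge 1}\set{-rj-n,\ldots,-rj-1}$, all of whose members are $\le -r-1$, the interval $[q^{-r},q^{-1}]$ is disjoint from $Y_{P/Q}\cup\set{0}$; thus a non-minimal digit $s_{-i-1}$ forces $|\gamma_i|\notin Y_{P/Q}\cup\set{0}$. This already yields one implication by contraposition: as the minimal string $w(v)$ is exactly the path always taking the unique outgoing edge of label-degree $<n$, a digit string fails to be eventually minimal precisely when it has infinitely many non-minimal digits $s_{-j}$, and for each such index the dichotomy gives $|\gamma_{j-1}|\notin Y_{P/Q}\cup\set{0}$; hence $|\gamma_i|\in Y_{P/Q}\cup\set{0}$ for all large $i$ is possible only if the string is eventually minimal.

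For the converse I would assume the tail consists entirely of minimal digits and prove $|\gamma_i|\in Y_{P/Q}\cup\set{0}$ (if the tail is all zero then $\gamma_i=0$ and we are done). By Lemma~\ref{degreeint}(ii)--(iii) each nonzero term $(s_{-i-\ell}/Q)(P/Q)^{-\ell}$ now has degree in $[-r\ell-n,-r\ell)$, a block of the degree set of $Y_{P/Q}$; writing $\ell_0$ for the index of the first nonzero digit, the leading term lies in $\set{-r\ell_0-n,\ldots,-r\ell_0-1}$, which is contained in the degree set of $Y_{P/Q}$. The one delicate point, and the main obstacle, is whether this leading coefficient can be annihilated by the lower-order contributions of the later terms, whose supports extend downward indefinitely. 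I expect to resolve this by splitting on the sign of $n-r$: when $n\le r$ a degree comparison shows that no later term can reach the leading degree, so $\deg\gamma_i$ lies in that block of $Y_{P/Q}$; when $n>r$ the blocks $\set{-rj-n,\ldots,-rj-1}$ overlap and cover every integer $\le -r-1$, so the bound $\deg\gamma_i\le -r-1$ (already forced by the minimality of $s_{-i-1}$) places $|\gamma_i|$ in $Y_{P/Q}$ irrespective of any cancellation. Combining the two implications yields the stated characterization.
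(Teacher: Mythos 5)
Your proposal is correct and follows essentially the same route as the paper's proof: reduce $\{\alpha(P/Q)^i\}$ to the tail $\pi(.s_{-i-1}s_{-i-2}\cdots)$ of the digit string and decide membership in $Y_{P/Q}\cup\{0\}$ by comparing degrees of the individual terms via Lemma~\ref{degreeint}. You are in fact more careful than the published argument on two points it passes over silently --- the possible cancellation of leading coefficients in the all-minimal case (your case split on $n$ versus $r$, or equivalently the recursion $P\gamma_i=s_{-i-1}+Q\gamma_{i+1}$) and the exact index of the shift at which a non-minimal digit is detected.
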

		\begin{proof}
			Let $\alpha\in\F_q((X^{-1}))$ with $\sexpan{\alpha}_{P/Q}=s_k\cdots s_0.s_{-1}s_{-2}\cdots$.
			Since \[s_k\cdots s_0s_{-1}s_{-2}\cdots\in W_{P/Q},\] we have $\{\alpha{(P/Q)}^i\}=\pi(.s_{-i-1}s_{-i-2}\cdots)$.
			Then it follows from Theorem~\ref{eventperiod} that $|\{\alpha{(P/Q)}^i\}|$ is eventually $0$ if and only if $\alpha=0$.
			
			Suppose $\alpha\neq 0$.  
			If $s_k\cdots s_0s_{-1}s_{-2}\cdots$ is eventually minimal, then there exists $\ell\in\N$ such that $\deg{s_{-i}}<n$ for $i>\ell$.  
			Thus, by Lemma~\ref{degreeint} and Theorem~\ref{eventperiod}, we have $|\{\alpha{(P/Q)}^i\}|\in Y_{P/Q}$ for all $i>\ell$.			
			On the other hand, if $s_k\cdots s_0s_{-1}s_{-2}\cdots$ is not eventually minimal, 
			then we can always find a sufficiently large $\ell\in\N$ such that $n\leq \deg{s_{-\ell}}<m$.  
			Then by Lemma~\ref{degreeint}, $|\{\alpha{(P/Q)}^{\ell}\}|\notin Y_{P/Q}$.
		\end{proof}		
		
		\begin{remark}
			If $m\leq 2n$, then $Y_{P/Q}$ simplifies to $Y_{P/Q}=(0,q^{-r})$.
		\end{remark}
	
		\subsection*{Acknowledgement} 
			The authors thank Shigeki Akiyama for helpful discussions on topics related to the present paper. 
			Furthermore, we are indebted to the anonymous referee for his careful reading of the manuscript.

\end{document}